\newtheorem{theorem}{Theorem}
\theoremstyle{plain}
\newtheorem{definition}{Definition}
\newtheorem{lemma}{Lemma}
\newtheorem{notation}{Notation}
\newtheorem{proposition}{Proposition}
\newtheorem{remark}{Remark}
\numberwithin{equation}{section}
\begin{document}
\title[Local zeta functions and fundamental solutions]{Local Zeta Functions, pseudodifferential operators, and Sobolev-type spaces
over non-Archimedean Local Fields}
\author{W. A. Z\'{u}\~{n}iga-Galindo}
\address{Centro de Investigaci\'{o}n y de Estudios Avanzados del Instituto
Polit\'{e}cnico Nacional\\
Departamento de Matem\'{a}ticas, Unidad Quer\'{e}taro\\
Libramiento Norponiente \#2000, Fracc. Real de Juriquilla. Santiago de
Quer\'{e}taro, Qro. 76230\\
M\'{e}xico.}
\email{wazuniga@math.cinvestav.edu.mx}
\thanks{The author was partially supported by Conacyt Grant No. 250845.}
\subjclass[2000]{ Primary 11S40, 47S10; Secondary 46E39, 47G10}
\keywords{Local zeta functions, Sobolev-type spaces, pseudodifferential operators,
fundamental solutions, non-Archimedean operator theory. }

\begin{abstract}
In this article we introduce a new type of local zeta functions and study some
connections with pseudodifferential operators in the framework of
non-Archimedean fields. The new local zeta functions are defined by
integra\-ting complex powers of norms of polynomials multiplied by infinitely
pseudo-differentiable functions. In characteristic zero, the new local zeta
functions admit meromorphic continuations to the whole complex plane, but they
are not rational functions. The real parts of the possible poles have a
description similar to the poles of Archimedean zeta functions. But they can
be irrational real numbers while in the classical case are rational numbers.
We also study, in arbitrary characteristic, certain connections between local
zeta functions and the existence of fundamental solutions for
pseudodifferential equations.

\end{abstract}
\maketitle

\section{Introduction}

This article aims to explore the connections between local zeta functions
(also called Igusa's local zeta functions) and pseudodifferential operators in
the framework of the non-Archimedean fields. The local zeta functions over
local fields, i.e. $\mathbb{R}$, $\mathbb{C}$, $\mathbb{Q}_{p}$,
$\mathbb{F}_{p}((T))$, are ubiquitous objects in mathematics and mathematical
physics, this is due mainly to the fact that they are the `dual objects' of
oscillatory integrals with analytic phases, see e.g. \cite{AVG},
\cite{Atiyah}, \cite{Ber}, \cite{B-G-Gonzalez-Dom}, \cite{Cassaigneetal},
\cite{D0}, \cite{D-L}, \cite{Gelfand-Shilov-1}, \cite{Igusa}, \cite{Igusa-SPF}%
, \cite{Igusa1}, \cite{Lo2}, \cite{Speer}, \cite{VZ}, \cite{VZ-1},
\cite{Weil}, \cite{Zuniga-LNM-2016} \cite{Zuniga-Nagoya-2003},
\cite{Zuniga-Padova-2003}, \cite{Zuniga-TAMS-2001} and the references therein.
Let $(K,\left\vert \cdot\right\vert _{K})$ be a local field of arbitrary
characteristic, $\phi:K^{n}\rightarrow\mathbb{C}$ a test function,
$\mathfrak{f}\in K\left[  x_{1},\ldots,x_{n}\right]  $ and $\left\vert
d^{n}x\right\vert _{K}$ a Haar measure on $K^{n}$. The simplest type of local
zeta function is defined as
\begin{equation}
Z_{\phi}\left(  s,\mathfrak{f}\right)  =\int\limits_{K^{n}\smallsetminus
\mathfrak{f}^{-1}\left(  0\right)  }\phi\left\vert \mathfrak{f}\right\vert
_{K}^{s}\left\vert d^{n}x\right\vert _{K}\text{ for }s\in\mathbb{C}\text{,
with }\operatorname{Re}(s)>0. \label{zeta_igusa}%
\end{equation}
These objects are deeply connected with string and Feynman amplitudes. Let us
mention that the works of Speer \cite{Speer} and Bollini, Giambiagi and
Gonz\'{a}lez Dom\'{\i}nguez \cite{B-G-Gonzalez-Dom} on regularization of
Feynman amplitudes in quantum field theory are based on the analytic
continuation of distributions attached to complex powers of polynomial
functions in the sense of Gel'fand and Shilov \cite{Gelfand-Shilov-1}. For
connections with string amplitudes see e.g. \cite{BG-GC-Zuniga}\ and the
references therein. In the Archimedean setting, the local zeta functions were
introduced in the 50's by Gel'fand and Shilov. The main motivation was that
the meromorphic continuation of Archimedean local zeta functions implies the
existence of fundamental solutions for differential ope\-rators with constant
coefficients. This result has a non-Archimedean counterpart. In
\cite{Zuniga-Padova-2003},\ see also \cite{Zuniga-LNM-2016} and the references
therein, the author noticed that the classical argument showing that the
analytic continuation of \ local zeta functions implies the existence of
fundamental solutions also works in non-Archimedean fields of characteristic
zero, and that for particular polynomials the Gel'fand-Shilov method gives
explicit formulas for fundamental solutions. In this article, we use methods
of pseudodifferential operators to study non-Archimedean local zeta functions.

A pseudodifferential operator with `polynomial symbol' $\left\vert
\mathfrak{h}\right\vert _{K}$, $\mathfrak{h}\in K\left[  \xi_{1},\ldots
,\xi_{n}\right]  $, is defined as $\boldsymbol{A}(\partial,\mathfrak{h}%
)\phi=\mathcal{F}_{\xi\rightarrow x}^{-1}(\left\vert \mathfrak{h}\right\vert
_{K}\mathcal{F}_{x\rightarrow\xi}\phi)$, where $\mathcal{F}$ denotes the
Fourier transform in the space of test functions. A theory of non-Archimedean
pseudodifferential equations is emerging motivated by its connections with
mathematical physics, see e.g. \cite{A-K-S}, \cite{Koch}, \cite{V-V-Z},
\cite{Zuniga-LNM-2016} and the references therein. The space of test functions
is not invariant under the action of pseudodifferential operators. We replace
it with $\mathcal{H}_{\infty}\subset L^{2}$ a Sobolev-type space which is a
nuclear countably Hilbert space in the sense of Gel'fand-Vilenkin. This type
of spaces was studied by the author in \cite{Zuniga2016}.

In this article we study the following integrals:%
\begin{equation}
Z_{\mathcal{F}\left(  g\right)  }\left(  s,\mathfrak{f}\right)  =\int
\limits_{K^{n}\smallsetminus\mathfrak{f}^{-1}\left(  0\right)  }\left\vert
\mathfrak{f}\right\vert _{K}^{s}\mathcal{F}\left(  g\right)  \left\vert
d^{n}x\right\vert _{K}\text{ } \label{zeta_zuiga}%
\end{equation}
for $s\in\mathbb{C}$, with $\operatorname{Re}(s)>0$, and $g\in\mathcal{H}%
_{\infty}$. For instance, if $\left\Vert \xi\right\Vert _{K}=\max
_{i}\left\vert \xi_{i}\right\vert _{K}$, $t>0$, and$\ \alpha>0$, then
$g\left(  x,t\right)  =\mathcal{F}_{\xi\rightarrow x}^{-1}\left(
e^{-t\left\Vert \xi\right\Vert _{K}^{\alpha}}\right)  \in\mathcal{H}_{\infty}%
$. This function is the `fundamental solution' of the heat equation over
$K^{n}$, see e.g. \cite{Koch}, \cite{V-V-Z}, \cite{Zuniga-LNM-2016}. We study
these local zeta functions in Section \ref{Sect_Local_zeta_forms} for certain
polynomials. It is interesting to mention that the complex counterparts of
these integrals are related with relevant arithmetic matters, see e.g.
\cite{Cassaigneetal} and the references therein. The space of test functions
is embedded in $\mathcal{H}_{\infty}$, and since the Fourier transform is an
isomorphism on this space, integrals of type (\ref{zeta_zuiga}) are
generalizations of the classical non-Archimedean local zeta functions
(\ref{zeta_igusa}). In characteristic zero, by using resolution of
singularities, we show that integrals $Z_{\mathcal{F}\left(  g\right)
}\left(  s,\mathfrak{f}\right)  $ admit meromorphic continuations to the whole
complex plane as $\mathcal{H}_{\infty}^{\ast}$-valued functions, here
$\mathcal{H}_{\infty}^{\ast}$\ denotes the strong dual of $\mathcal{H}%
_{\infty}$, see Theorem \ref{Theorem4}. These meromorphic continuations are
not rational functions of $q^{-s}$, see Section \ref{Sect_Local_zeta_forms},
and the description of the real parts of the possible poles resembles the case
of the Archimedean zeta functions, but there are relevant differences. If
$\left\{  Ni,v_{i}\right\}  _{i\in T}$ are the numerical data of an embedded
resolution of singularities of the map $\mathfrak{f}:K^{n}\rightarrow K$, then
the real parts of the possible poles of the meromorphic continuation of
$Z_{\mathcal{F}\left(  g\right)  }\left(  s,\mathfrak{f}\right)  $ belongs to
the set $\cup_{i\in T}\frac{-\left(  v_{i}+M_{i}\right)  }{N_{i}}$, where each
$M_{i}$ is an `arbitrary arithmetic progression of real numbers', see Theorem
\ref{Theorem4}. In the real case all the $M_{i}$ are just the set of
non-negative integers, see e.g. \cite{Igusa}, \cite{Igusa1}. The Hironaka
resolution of singularities theorem \cite{H} allow us to reduce the study of
$Z_{\mathcal{F}\left(  g\right)  }\left(  s,\mathfrak{f}\right)  $ to the case
in which $\mathfrak{f}$ is a monomial, like in the classical case. But the
study of these monomial integrals does \ not follow the classical pattern
because there is no a simply description for the functions in $\mathcal{H}%
_{\infty}$. All our results about the meromorphic continuation for monomial
integrals are valid in arbitrary characteristic, see Section
\ref{Sect_elelmetary_integrals}.

The pseudodifferential operators $\boldsymbol{A}(\partial,\mathfrak{h})$ give
rise to continuous operators from $\mathcal{H}_{\infty}$ \ onto itself, and
thus they have continuous adjoints, denoted as $\boldsymbol{A}^{\ast}%
(\partial,\mathfrak{h})$, from $\mathcal{H}_{\infty}^{\ast}$ \ onto itself. In
this framework, $Z_{\mathcal{F}\left(  \boldsymbol{A}(\partial,\mathfrak{h}%
_{i})g\right)  }\left(  s,\mathfrak{f}\right)  $ defines a $\mathcal{H}%
_{\infty}^{\ast}$-valued function for $s$ in the half-plane $\operatorname{Re}%
(s)>0$. Then, for instance, it makes sense to ask if $Z_{\mathcal{F}\left(
g\right)  }\left(  s,\mathfrak{f}\right)  $ satisfies a
pseudodifferential\ equation of the form
\[
\sum_{i=1}^{D}c_{i}\left(  q^{-s}\right)  Z_{\mathcal{F}\left(  \boldsymbol{A}%
(\partial,\mathfrak{h}_{i})g\right)  }\left(  s+k_{i},\mathfrak{f}\right)
=0,
\]
where the $c_{i}\left(  s\right)  \in\mathbb{C}\left(  q^{-s}\right)  $ and
the $k_{i}$ are integers. We also study the existence of fundamental
solutions, i.e. solutions for equations of the form $\boldsymbol{A}^{\ast
}(\partial,\mathfrak{f})E=\delta$ in $\mathcal{H}_{\infty}^{\ast}$, where
$\delta$ denotes the Dirac distribution. We show, like in the real case, see
for instance \cite{Atiyah}, \cite{Ber}, \cite{Igusa}, that the existence of a
fundamental solution is equivalent to the division problem: there exists $E$
in $\mathcal{H}_{\infty}^{\ast}$ such that $\widehat{E}\left\vert
\mathfrak{f}\right\vert _{K}=1$ almost everywhere, here $\widehat
{E}\mathfrak{\ }$denotes the Fourier of $E$ as a distribution, see Theorem
\ref{Theorem4A}. Finally, by using the Gel'fand-Shilov method of analytic
continuation, we show that the existence of an analytic continuation for
$Z_{\mathcal{F}\left(  g\right)  }\left(  s,\mathfrak{f}\right)  $ implies the
existence of a fundamental solution for operator $\boldsymbol{A}^{\ast
}(\partial,\mathfrak{f})$, see Theorem \ref{Theorem5}. These results are valid
in arbitrary characteristic.

Another important motivation for studying integrals $Z_{\mathcal{F}\left(
g\right)  }\left(  s,\mathfrak{f}\right)  $ comes from the fact that existence
of meromorphic continuations for local zeta functions in local fields of
positive characteristic is an open and difficult problem, see e.g.
\cite{Igusa-SPF}, \cite{Zuniga-Nagoya-2003}, \cite{Zuniga-TAMS-2001} and the
references therein. A natural and possible way to attack this problem is by
developing a suitable theory of $D$-modules in the framework of
non-Archimedean fields of arbitrary characteristic, which would allow\ us to
use Bernstein's approach to establish the meromorphic continuation for local
zeta functions in positive characteristic, see e.g. \cite{Ber}, \cite{Igusa}.
Several theories of arithmetic-type $D$-modules on fields of arbitrary
characteristic have been constructed, see e.g. \cite{Berthelot},
\cite{Mustata}. However, all these theories involved operators acting on
functions from $K^{n}$ into $K$, and the operators needed to study local zeta
functions must act on functions from $K^{n}$ into $\mathbb{C}$, thus, the only
possibility is to use pseudodifferential operators. Our results suggest the
existence of a theory of pseudodifferential $D$-modules \`{a} la Bernstein
which could be used to establish the analytic continuation of local zeta
functions in arbitrary characteristic.

\section{\label{Sect2}Fourier analysis on Non-Archimedean local fields:
essential ideas}

In this section we fix the notation and collect some basic definitions on
Fourier analysis on non-Archimedean local fields that we will use through the
article. For an in-depth exposition the reader may consult \cite{A-K-S},
\cite{Taibleson}, \cite{V-V-Z}, \cite{Weil}.

\subsection{Non-Archimedean local fields}

Along this article $K$ will denote a \textit{non-Archimedean local field} of
arbitrary characteristic unless otherwise stated. The associated absolute
value of $K$ is denoted as $\left\vert \cdot\right\vert _{K}$. \textit{The
ring of integers} of $K$ is $R_{K}=$ $\left\{  x\in K;\left\vert x\right\vert
_{K}\leq1\right\}  $, its unique maximal ideal is $P_{K}=$ $\left\{  x\in
R_{K};\left\vert x\right\vert _{K}<1\right\}  =\pi R_{K}$, where $\pi$ is a
fixed generator of $P_{K}$, typically called a \textit{local uniformizing
parameter} of $K$; and $R_{K}^{\times}=$ $\left\{  x\in R_{K};\left\vert
x\right\vert _{K}=1\right\}  $ is the \textit{group of units} of $R_{K}$. The
\textit{residue field} of $K$ is $R_{K}/P_{K}\simeq\mathbb{F}_{q}$, the finite
field with $q$ elements, where $q$ is a power of a prime number $p$. Let
$ord:K\rightarrow\mathbb{Z}\cup\left\{  \infty\right\}  $ denote the valuation
of $K$. We assume that for $x\in K^{\times}$, $\left\vert x\right\vert
_{K}=q^{-ord(x)}$, i.e. \ $\left\vert \cdot\right\vert _{K}$\ is a normalized
absolute value. Every non-Archimedean local field of characteristic zero is
isomorphic (as a topological field) to a finite extension of the field of
$p-$adic numbers $\mathbb{Q}_{p}$. And any non-Archimedean local field of
characteristic $p$ is isomorphic to a finite extension of the field of formal
Laurent series $\mathbb{F}_{q}((T))$ over a finite field $\mathbb{F}_{q}$, see
e.g. \cite{Weil}.

We extend the norm $||\cdot||_{K}$\ to $K^{n}$ by taking%
\[
||x||_{K}:=\max_{1\leq i\leq n}|x_{i}|_{K},\qquad\text{for }x=(x_{1}%
,\dots,x_{N})\in K^{n}.
\]
We define $ord(x)=\min_{1\leq i\leq N}\{ord(x_{i})\}$, then $||x||_{K}%
=p^{-ord(x)}$. The metric space $\left(  K^{n},||\cdot||_{K}\right)  $ is a
complete ultrametric space, which is a totally disconnected topological space.
For $l\in\mathbb{Z}$, denote by $B_{l}^{n}(a)=\{x\in K^{n};||x-a||_{K}\leq
q^{l}\}$ \textit{the ball of radius }$q^{l}$ \textit{with center at}
$a=(a_{1},\dots,a_{N})\in K^{n}$, and take $B_{l}^{n}(0):=B_{l}^{n}$. Note
that $B_{l}^{n}(a)=B_{l}(a_{1})\times\cdots\times B_{l}(a_{n})$, where
$B_{l}(a_{i}):=\{x\in K;|x-a_{i}|_{K}\leq q^{l}\}$ is the one-dimensional ball
of radius $q^{l}$ with center at $a_{i}\in K$. The ball $B_{0}^{n}$ equals the
product of $n$ copies of $B_{0}:=R_{K}$, the ring of integers of $K$. For
$l\in\mathbb{Z}$, denote by $S_{l}^{n}(a)=\{x\in K^{n};||x-a||_{K}=q^{l}\}$
\textit{the sphere of radius }$q^{l}$ \textit{with center at} $a=(a_{1}%
,\dots,a_{N})\in K^{n}$, and take $S_{l}^{N}(0):=S_{l}^{N}$.

\subsection{Some function spaces}

A complex-valued function $\phi$ defined on $K^{n}$ is \textit{called locally
constant} if for any $x\in K^{n}$ there exists an integer $l(x)\in\mathbb{Z}$
such that $\phi(x+x^{\prime})=\phi(x)$ for $x^{\prime}\in B_{l(x)}^{N}$. A
such function is called a \textit{Bruhat-Schwartz function (or a test
function)} if it has compact support. The $\mathbb{C}$-vector space of
Bruhat-Schwartz functions is denoted by $\mathcal{D}(K^{n}):=$ $\mathcal{D}$.
Let $\mathcal{D}^{^{\prime}}(K^{n}):=\mathcal{D}^{\prime}$ denote the set of
all continuous functionals (distributions) on $\mathcal{D}$.

Along this article, $\left\vert d^{n}x\right\vert _{K}$ will denote a Haar
measure on $K^{n}$ normalized so that $\int_{R_{K}^{n}}\left\vert
d^{n}x\right\vert _{K}=1$. Given $r\in\left[  0,\infty\right)  $, we denote by
$L^{r}\left(  K^{n},\left\vert d^{n}x\right\vert _{K}\right)  :=L^{r}%
(K^{n})=L^{r}$, the $\mathbb{C}$-vector space of all the complex valued
functions $g$ satisfying $\int_{K^{n}}\left\vert g\left(  x\right)
\right\vert ^{r}\left\vert d^{n}x\right\vert _{K}\allowbreak<\infty$;
$L^{\infty}\left(  K^{n},\left\vert d^{n}x\right\vert _{K}\right)
:=L^{\infty}(K^{n})=L^{\infty}$ denotes the $\mathbb{C}$-vector space of all
the complex valued functions $g$ such that the essential supremum of
$\left\vert g\right\vert $\ is bounded. \ Let denote by $C\left(
K^{n},\mathbb{C}\right)  :=C$, the $\mathbb{C}$-vector space of all the
complex valued functions which are continuous. Set
\[
C_{0}\left(  K^{n}\right)  :=\left\{  f:K^{n}\rightarrow\mathbb{C};f\text{ is
continuous and }\lim_{x\rightarrow\infty}f(x)=0\right\}  ,
\]
where $\lim_{x\rightarrow\infty}f(x)=0$ means that for every $\epsilon>0$
there exists a compact subset $B(\epsilon)$ such that $\left\vert f\left(
x\right)  \right\vert <\epsilon$ for $x\in K^{n}\smallsetminus B(\epsilon)$.
We recall that $\left(  C_{0}\left(  K^{n}\right)  ,\left\Vert \cdot
\right\Vert _{L^{\infty}}\right)  $ is a Banach space.

\subsection{Fourier transform}

We denote by $\chi(\cdot)$ a fixed additive character on $K$, i.e. a
continuous map from $K$ into the unit circle satisfying $\chi(y_{0}%
+y_{1})=\chi(y_{0})\chi(y_{1})$, $y_{0},y_{1}\in K$. If $x=\left(
x_{1},\ldots,x_{n}\right)  $, $\xi=\left(  \xi_{1},\ldots,\xi_{n}\right)  $,
we set $x\cdot\xi=\sum_{i=1}^{n}x_{i}\xi_{i}$.

If $g\in L^{1}$ its Fourier transform is defined by%
\[
(\mathcal{F}g)(\xi)=%
{\displaystyle\int\limits_{K^{n}}}
g\left(  x\right)  \chi\left(  -x\cdot\xi\right)  \left\vert d^{n}x\right\vert
_{K}=%
{\displaystyle\int\limits_{K^{n}}}
g\left(  x\right)  \overline{\chi\left(  x\cdot\xi\right)  }\left\vert
d^{n}x\right\vert _{K}\text{,}%
\]
where the bar denotes the complex conjugate. The Fourier transform is an
isomorphism of $\mathbb{C}$-vector spaces from $\mathcal{D}\left(
K^{n}\right)  $ into itself satisfying
\begin{equation}
(\mathcal{F}(\mathcal{F}g))(x)=g(-x) \label{inv_Fourier_transform}%
\end{equation}
for every $g$ in $\mathcal{D}\left(  K^{n}\right)  $. If $g\in L^{2}$, its
Fourier transform is defined as%
\[
(\mathcal{F}g)(\xi)=\lim_{l\rightarrow\infty}%
{\displaystyle\int\limits_{\left\Vert x\right\Vert _{K}\leq q^{l}}}
g\left(  x\right)  \chi\left(  -x\cdot\xi\right)  \left\vert d^{n}x\right\vert
_{K},
\]
where the \ limit is taken in $L^{2}$. We recall that the Fourier transform is
unitary on $L^{2}$, i.e. $\left\Vert g\right\Vert _{L^{2}}=\left\Vert
\mathcal{F}g\right\Vert _{L^{2}}$ for $g\in L^{2}$ and that
(\ref{inv_Fourier_transform}) is also valid in $L^{2}$, see e.g. \cite[Chapter
III, Section 2]{Taibleson}. We will also use the notation $\mathcal{F}%
_{x\rightarrow\xi}g$ and $\widehat{g}$\ for the Fourier transform of $g$.

The Fourier transform $\mathcal{F}\left(  T\right)  $ of a distribution
$T\in\mathcal{D}^{^{\prime}}\left(  K^{n}\right)  $ is defined by%
\[
\left(  \mathcal{F}\left(  T\right)  ,g\right)  =\left(  T,\mathcal{F}\left(
g\right)  \right)  \text{ for all }g\in\mathcal{D}^{\prime}\left(
K^{n}\right)  \text{.}%
\]
The Fourier transform $T\rightarrow\mathcal{F}\left(  T\right)  $ is a linear
isomorphism from $\mathcal{D}^{^{\prime}}\left(  K^{n}\right)  $\ onto itself.
Furthermore, $T=\mathcal{F}\left[  \mathcal{F}\left[  T\right]  \left(
-\xi\right)  \right]  $. We also use the notation $\mathcal{F}_{x\rightarrow
\xi}T$ and $\widehat{T}$\ for the Fourier transform of $T$.

\section{\label{Sect3}The spaces $\mathcal{H}_{\infty}$}

The Bruhat-Schwartz space $\mathcal{D}(K^{n}\mathbb{)}$ is not invariant under
the action of pseudo\-differential operators. In this section, we review and
expand some results about a class of nuclear countably Hilbert spaces
introduced by the author in \cite{Zuniga2016}, these spaces are invariant
under the action of large class of pseudodifferential operators. The notation
here is slightly different to the notation used in \cite{Zuniga2016}, in
addition, the results in \cite{Zuniga2016} were formulated for $\mathbb{Q}%
_{p}^{n}$, but these results are valid in non-Archimedean local fields of
arbitrary characteristic. For an in-depth discussion about nuclear countably
Hilbert spaces, the reader may consult \cite{Gel-Shilov}, \cite{Gel-Vil},
\cite{Hida et al}, \cite{Obata}.

\begin{notation}
We set $\mathbb{R}_{+}:=\left\{  x\in\mathbb{R}:x\geq0\right\}  $. We denote
by $\mathbb{N}$ the set of non-negative integers. We set $\left[  \xi\right]
_{K}:=\max\left(  1,\left\Vert \xi\right\Vert _{K}\right)  $.
\end{notation}

We define for $\varphi$, $\varrho$ in $\mathcal{D}(K^{n})$ the following
scalar product:%
\begin{equation}
\left\langle \varphi,\varrho\right\rangle _{l}:=%
{\textstyle\int\limits_{K^{n}}}
\left[  \xi\right]  _{K}^{l}\widehat{\varphi}\left(  \xi\right)
\overline{\widehat{\varrho}}\left(  \xi\right)  \left\vert d^{n}\xi\right\vert
_{K}, \label{product}%
\end{equation}
for $l\in\mathbb{N}$, where the bar denotes the complex conjugate. We also set
$\left\Vert \varphi\right\Vert _{l}^{2}=\left\langle \varphi,\varphi
\right\rangle _{l}$. Notice that $\left\Vert \cdot\right\Vert _{l}%
\leq\left\Vert \cdot\right\Vert _{m}$ for $l\leq m$. Let denote by
$\mathcal{H}_{l}\left(  K^{n}\right)  :=\mathcal{H}_{l}$ the completion of
$\mathcal{D}(K^{n})$ with respect to $\left\langle \cdot,\cdot\right\rangle
_{l}$. Then $\mathcal{H}_{m}\hookrightarrow\mathcal{H}_{l}$ (continuous
embedding) for $l\leq m$. We set%
\[
\mathcal{H}_{\infty}\left(  K^{n}\right)  :=\mathcal{H}_{\infty}=%
{\textstyle\bigcap\limits_{l\in\mathbb{N}}}
\mathcal{H}_{l}.
\]
Notice that $\mathcal{H}_{0}=L^{2}$ and that $\mathcal{H}_{\infty}\subset
L^{2}$. With the topology induced by the family of seminorms $\left\Vert
\cdot\right\Vert _{l\in\mathbb{N}}$, $\mathcal{H}_{\infty}$ becomes a locally
convex space, which is metrizable. Indeed,
\[
d\left(  f,g\right)  :=\max_{l\in\mathbb{N}}\left\{  2^{-l}\frac{\left\Vert
f-g\right\Vert _{l}}{1+\left\Vert f-g\right\Vert _{l}}\right\}  \text{, with
}f\text{, }g\in\mathcal{H}_{\infty}\text{,}%
\]
is a metric for the topology of $\mathcal{H}_{\infty}$ considered as a convex
topological space. A sequence $\left\{  f_{l}\right\}  _{l\in\mathbb{N}}$ in
$\left(  \mathcal{H}_{\infty},d\right)  $ converges to $f\in\mathcal{H}%
_{\infty}$, if and only if, $\left\{  f_{l}\right\}  _{l\in\mathbb{N}}$
converges to $f$ in the norm $\left\Vert \cdot\right\Vert _{l}$ for all
$l\in\mathbb{N}$. From this observation follows that the topology on
$\mathcal{H}_{\infty}$ coincides with the projective\ limit topology $\tau
_{P}$. An open neighborhood base at zero of $\tau_{P}$ is given by the choice
of $\epsilon>0$ and $l\in\mathbb{N}$, and the set
\[
U_{\epsilon,l}:=\left\{  f\in\mathcal{H}_{\infty};\left\Vert f\right\Vert
_{l}<\epsilon\right\}  .
\]
The space $\mathcal{H}_{\infty}$ endowed with the topology $\tau_{P}$\ is a
countably Hilbert space in the sense of Gel'fand and Vilenkin, see e.g.
\cite[Chapter I, Section 3.1]{Gel-Vil} or \cite[Section 1.2]{Obata}.
Furthermore $\left(  \mathcal{H}_{\infty},\tau_{P}\right)  $ is metrizable and
complete and hence a Fr\'{e}chet space, cf. Lemma \cite[Lemma 3.3]%
{Zuniga2016}. In addition, the completion of the metric space $\left(
\mathcal{D}(K^{n}),d\right)  $ is $\left(  \mathcal{H}_{\infty},d\right)  $,
and this space is a nuclear countably Hilbert space, cf. \cite[Lemma 3.4,
Theorem 3.6]{Zuniga2016}.

\begin{lemma}
\label{Lemma2A}With the above notation, the following assertions hold:

\noindent(i) $\mathcal{H}_{\infty}\left(  K^{n}\right)  $ is continuously
embedded in $C_{0}\left(  K^{n}\right)  $;

\noindent(ii) $\mathcal{H}_{l}\left(  K^{n}\right)  =\left\{  f\in L^{2}%
(K^{n});\left\Vert f\right\Vert _{l}<\infty\right\}  =\left\{  T\in
\mathcal{D}^{\prime}(K^{n});\left\Vert T\right\Vert _{l}<\infty\right\}  $;

\noindent(iii) $\mathcal{H}_{\infty}\left(  K^{n}\right)  =\left\{  f\in
L^{2}(K^{n});\left\Vert f\right\Vert _{l}<\infty\text{, for every }%
l\in\mathbb{N}\right\}  $;

\noindent(iv) $\mathcal{H}_{\infty}\left(  K^{n}\right)  =\left\{
T\in\mathcal{D}^{\prime}(K^{n});\left\Vert T\right\Vert _{l}<\infty\text{, for
every }l\in\mathbb{N}\right\}  $. The equalities in (ii)-(iv) are in the sense
of vector spaces.

\noindent(v) $\mathcal{H}_{\infty}\left(  K^{n}\right)  \subset L^{1}\left(
K^{n}\right)  $. In particular, $\widehat{g}\in C_{0}\left(  K^{n}\right)  $
for $g\in\mathcal{H}_{\infty}\left(  K^{n}\right)  $.
\end{lemma}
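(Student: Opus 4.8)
The plan is to transport the whole statement to the Fourier side, where the norms $\left\Vert\cdot\right\Vert_{l}$ become ordinary weighted $L^{2}$-norms, and then to read off the assertions one by one. For $l\in\mathbb{N}$ put $L_{l}^{2}:=L^{2}\bigl(K^{n},[\xi]_{K}^{l}\,|d^{n}\xi|_{K}\bigr)$. First I would record that $\mathcal{D}(K^{n})$ is dense in $L_{l}^{2}$ (finite linear combinations of indicator functions of balls are dense in $L^{2}$ of any Radon measure on the totally disconnected space $K^{n}$, and these belong to $\mathcal{D}$), and that $\mathcal{F}$ maps $\mathcal{D}(K^{n})$ bijectively onto itself with $\left\Vert\varphi\right\Vert_{l}=\left\Vert\widehat{\varphi}\right\Vert_{L_{l}^{2}}$; hence $\mathcal{F}$ extends to an isometric isomorphism $\mathcal{F}\colon\mathcal{H}_{l}\to L_{l}^{2}$. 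Since $[\xi]_{K}^{l}\geq1$ we have $L_{l}^{2}\subseteq L^{2}$ with $\left\Vert\cdot\right\Vert_{L^{2}}\leq\left\Vert\cdot\right\Vert_{L_{l}^{2}}$, so $\mathcal{H}_{l}\subseteq L^{2}$ and $\mathcal{H}_{l}=\{f\in L^{2}:\widehat{f}\in L_{l}^{2}\}=\{f\in L^{2}:\left\Vert f\right\Vert_{l}<\infty\}$, which gives the first identity in (ii). For the distributional description: if $T\in\mathcal{D}^{\prime}(K^{n})$ has $\left\Vert T\right\Vert_{l}<\infty$, then by definition $\widehat{T}$ is represented by a function in $L_{l}^{2}\subseteq L^{2}$, so $T\in L^{2}$ and the previous case applies; conversely $\mathcal{H}_{l}\subseteq L^{2}\subseteq\mathcal{D}^{\prime}$. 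Parts (iii) and (iv) then follow at once by intersecting over $l$: $g\in\mathcal{H}_{\infty}$ exactly when $g$ lies in every $\mathcal{H}_{l}$, i.e.\ when $g$ is an $L^{2}$-function (equivalently a distribution) with $\left\Vert g\right\Vert_{l}<\infty$ for all $l\in\mathbb{N}$.

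For (i) the key input is the elementary volume estimate: on the sphere $S_{j}^{n}$ one has $[\xi]_{K}\equiv q^{j}$ (for $j\geq1$) and $\operatorname{meas}(S_{j}^{n})=q^{nj}(1-q^{-n})$, while $[\xi]_{K}\equiv1$ on $R_{K}^{n}$, so that
\[
\int_{K^{n}}[\xi]_{K}^{-l}\,|d^{n}\xi|_{K}=1+(1-q^{-n})\sum_{j\geq1}q^{(n-l)j}<\infty\qquad\text{whenever }l>n.
\]
Fixing $l=n+1$, Cauchy--Schwarz gives, for $g\in\mathcal{H}_{l}$, $\left\Vert\widehat{g}\right\Vert_{L^{1}}=\int_{K^{n}}|\widehat{g}|\,[\xi]_{K}^{l/2}[\xi]_{K}^{-l/2}\,|d^{n}\xi|_{K}\leq C\left\Vert g\right\Vert_{l}$ with $C=\bigl(\int_{K^{n}}[\xi]_{K}^{-l}\,|d^{n}\xi|_{K}\bigr)^{1/2}$, so $\widehat{g}\in L^{1}(K^{n})$. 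By the non-Archimedean Riemann--Lebesgue lemma $\mathcal{F}(\widehat{g})\in C_{0}(K^{n})$, and by $L^{2}$-Fourier inversion $\mathcal{F}(\widehat{g})(x)=g(-x)$ for almost every $x$; hence $g$ agrees almost everywhere with an element of $C_{0}(K^{n})$ and $\left\Vert g\right\Vert_{L^{\infty}}\leq\left\Vert\widehat{g}\right\Vert_{L^{1}}\leq C\left\Vert g\right\Vert_{n+1}$. Since $\mathcal{H}_{\infty}\hookrightarrow\mathcal{H}_{n+1}$ and the $L^{\infty}$-norm on $\mathcal{H}_{\infty}$ is dominated by the continuous seminorm $\left\Vert\cdot\right\Vert_{n+1}$, the embedding $\mathcal{H}_{\infty}\hookrightarrow C_{0}(K^{n})$ is continuous.

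Finally, for (v): the estimate just obtained already shows $\widehat{g}\in L^{1}(K^{n})$ for every $g\in\mathcal{H}_{\infty}$, hence $\widehat{g}\in C_{0}(K^{n})$ by Riemann--Lebesgue, which is the ``in particular'' clause. To obtain $g$ itself in $L^{1}(K^{n})$ I would run the same argument with $\widehat{g}$ in the role of $g$: once one knows that $\mathcal{F}$ maps $\mathcal{H}_{\infty}$ onto $\mathcal{H}_{\infty}$, it follows that $\widehat{g}\in\mathcal{H}_{\infty}$, whence the Cauchy--Schwarz bound above applied to $\widehat{g}$ yields $\mathcal{F}(\widehat{g})\in L^{1}(K^{n})$, and $\mathcal{F}(\widehat{g})(x)=g(-x)$ gives $g\in L^{1}(K^{n})$. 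The step I expect to be the main obstacle is precisely this $\mathcal{F}$-invariance of $\mathcal{H}_{\infty}$, i.e.\ that each $\left\Vert\widehat{g}\right\Vert_{l}$ is bounded by finitely many of the $\left\Vert g\right\Vert_{m}$; this is where the exact form of the inner products $\left\langle\cdot,\cdot\right\rangle_{l}$ is used, and it is the reason the Fourier transform is built into the very definition of the spaces $\mathcal{H}_{l}$. Granting it, (v), together with (i)--(iv), completes the proof of the lemma.
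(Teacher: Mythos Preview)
Your treatment of parts (i)--(iv) is essentially identical to the paper's: the Cauchy--Schwarz bound $\left\Vert \widehat{g}\right\Vert_{L^{1}}\leq C\left\Vert g\right\Vert_{l}$ for $l>n$, the Riemann--Lebesgue lemma, and the density argument in weighted $L^{2}$ are exactly the ingredients the paper uses, only you package them via the isometry $\mathcal{F}\colon\mathcal{H}_{l}\to L^{2}_{l}$ rather than by an explicit approximating sequence.

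The genuine problem is in (v). The step you single out as the main obstacle---the $\mathcal{F}$-invariance of $\mathcal{H}_{\infty}$---is not merely hard to prove, it is \emph{false}. Take the very example the paper works with in Section~\ref{Sect_Local_zeta_forms}: $g=\mathcal{F}^{-1}\bigl(e^{-\Vert\xi\Vert_{K}^{\alpha}}\bigr)$ for $\alpha>0$. One checks directly that $\Vert g\Vert_{l}^{2}=\int[\xi]_{K}^{l}e^{-2\Vert\xi\Vert_{K}^{\alpha}}\,|d^{n}\xi|_{K}<\infty$ for every $l$, so $g\in\mathcal{H}_{\infty}$. But $\Vert\widehat{g}\Vert_{l}^{2}=\int[\xi]_{K}^{l}\,|g(-\xi)|^{2}\,|d^{n}\xi|_{K}$, and it is known that the non-Archimedean heat kernel satisfies $g(x)\asymp\Vert x\Vert_{K}^{-n-\alpha}$ for $\Vert x\Vert_{K}$ large (see e.g.\ Kochubei~\cite{Koch} or \cite{Zuniga-LNM-2016}); hence $\int_{S_{j}^{n}}[\xi]_{K}^{l}|g|^{2}\asymp q^{j(l-n-2\alpha)}$, and the sum over $j\geq 1$ diverges once $l\geq n+2\alpha$. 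Thus $\widehat{g}\notin\mathcal{H}_{\infty}$, and your proposed route from $\widehat{g}\in\mathcal{H}_{\infty}$ to $g\in L^{1}$ collapses. The underlying reason is that membership in $\mathcal{H}_{\infty}$ encodes only rapid decay of $\widehat{g}$, not of $g$; unlike the Archimedean Schwartz space, the definition is one-sided in the Fourier variable.

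The paper does not give a self-contained argument for $\mathcal{H}_{\infty}\subset L^{1}$ either: it simply invokes \cite[Theorem~3.15-(ii)]{Zuniga2016}. So to complete (v) you would need to import or reproduce that external argument, not the $\mathcal{F}$-invariance you conjecture. The ``in particular'' clause of (v), by contrast, you have already proved correctly inside (i): $\widehat{g}\in L^{1}$ follows from $g\in\mathcal{H}_{l}$ for $l>n$, and Riemann--Lebesgue gives $\widehat{g}\in C_{0}$ without any appeal to Fourier invariance.
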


\begin{proof}
(i) Take $f\in\mathcal{H}_{\infty}$ and $l>n$, then by using Cauchy-Schwarz
inequality,
\begin{align*}
\left\Vert \widehat{f}\right\Vert _{L^{1}}  &  =\int\limits_{K^{n}}\left\vert
\widehat{f}\right\vert \left\vert d^{n}\xi\right\vert _{K}=\int\limits_{K^{n}%
}\left\{  \left[  \xi\right]  _{K}^{\frac{l}{2}}\left\vert \widehat
{f}\right\vert \right\}  \left\{  \frac{1}{\left[  \xi\right]  _{K}^{\frac
{l}{2}}}\right\}  \left\vert d^{n}\xi\right\vert _{K}\\
&  \leq\left\Vert \frac{1}{\left[  \xi\right]  _{K}^{\frac{l}{2}}}\right\Vert
_{L^{2}}\left\Vert \left[  \xi\right]  _{K}^{\frac{l}{2}}\left\vert
\widehat{f}\right\vert \right\Vert _{L^{2}}\leq C(n,l)\left\Vert f\right\Vert
_{l},
\end{align*}
where $C(n,l)$\ is a positive constant, which shows that $\widehat{f}\in
L^{1}$. Then, $f$ is continuous and by the Riemann-Lebesgue theorem, (see e.g.
\cite[Theorem 1.6]{Taibleson}), $f\in C_{0}\left(  K^{n}\right)  $. On the
other hand, $\left\Vert f\right\Vert _{L^{\infty}}\leq\left\Vert \widehat
{f}\right\Vert _{L^{1}}\leq C(n,l)\left\Vert f\right\Vert _{l}$, which shows
that $\mathcal{H}_{l}$ is conti\-nuously embedded in $C_{0}\left(
K^{n}\right)  $ for $l>n$. Thus $\mathcal{H}_{\infty}\subset C_{0}\left(
K^{n}\right)  $. Now, if $f_{m}$ $\underrightarrow{d}$ $f$ in $\mathcal{H}%
_{\infty}\left(  K^{n}\right)  $, i.e. if $f_{m}$ $\underrightarrow{\left\Vert
\cdot\right\Vert _{l}}$ $f$ in $\mathcal{H}_{l}$ for any $l\in\mathbb{N}$,
then $f_{m}$ $\underrightarrow{\left\Vert \cdot\right\Vert _{L^{\infty}}}$ $f$
in $C_{0}\left(  K^{n}\right)  $.

(ii) In order to prove the first equality, it is sufficient to show that if
$f\in L^{2}$ and $\left\Vert f\right\Vert _{l}<\infty$ then $f\in
\mathcal{H}_{l}$. The condition $\left\Vert f\right\Vert _{l}<\infty$ is
equivalent to $\left[  \xi\right]  _{K}^{\frac{l}{2}}\widehat{f}\in
L^{2}(K^{n})$, which implies that $\widehat{\left\{  \left[  \xi\right]
_{K}^{\frac{l}{2}}\widehat{f}\right\}  }\left(  -\xi\right)  \in L^{2}(K^{n}%
)$. By the density of $\mathcal{D}(K^{n})$ in $L^{2}(K^{n})$, there is a
sequence $\left\{  g_{k}\right\}  _{k\in\mathbb{N}}$ in $\mathcal{D}(K^{n})$
such that $g_{k}\left(  \xi\right)  $ $\underrightarrow{\left\Vert
\cdot\right\Vert _{L^{2}}}$ $\widehat{\left\{  \left[  \xi\right]  _{K}%
^{\frac{l}{2}}\widehat{f}\right\}  }\left(  -\xi\right)  $, which implies that
$\widehat{g}_{k}$ $\underrightarrow{\left\Vert \cdot\right\Vert _{L^{2}}}$
$\left[  \xi\right]  _{K}^{\frac{l}{2}}\widehat{f}$, which is equivalent to
$\mathcal{F}^{-1}(\widehat{g}_{k}/\left[  \xi\right]  _{K}^{\frac{l}{2}%
})\underrightarrow{\left\Vert \cdot\right\Vert _{l}}$ $f$ with $\widehat
{g}_{k}/\left[  \xi\right]  _{K}^{\frac{l}{2}}\in\mathcal{D}(K^{n})$ for any
$k\in\mathbb{N}$. To establish the second equality, we note that since
$\left\Vert \cdot\right\Vert _{0}\leq\left\Vert \cdot\right\Vert _{l}$ for any
$l\in\mathbb{N}$, if $T\in H_{l}$ then $\widehat{T}\in L^{2}$, and thus
$T\in\mathcal{D}^{\prime}(K^{n})$ and $\left\Vert T\right\Vert _{l}<\infty$.
Conversely, if $T\in\mathcal{D}^{\prime}(K^{n})$ and $\left\Vert T\right\Vert
_{l}<\infty$ then $T\in L^{2}$ and $\left\Vert T\right\Vert _{l}<\infty$.

(iii) It follows from (ii).

(iv) It follows from (iii) by using that the following assertions are
equivalent: (1) $T\in\mathcal{D}^{\prime}(K^{n})$ and $\left\Vert T\right\Vert
_{l}<\infty$ for any $l\in\mathbb{N}$; (2) $T\in L^{2}$ and $\left\Vert
T\right\Vert _{l}<\infty$ for any $l\in\mathbb{N}$.

(iv) By Theorem 3.15-(ii) in \cite{Zuniga2016}, $\mathcal{H}_{\infty}\left(
K^{n}\right)  \subset L^{1}\left(  K^{n}\right)  $. The fact that $\widehat
{g}\in C_{0}\left(  K^{n}\right)  $ for $g\in\mathcal{H}_{\infty}\left(
K^{n}\right)  $ follows from the Riemann-Lebesgue theorem.
\end{proof}

\subsection{The dual space of $\mathcal{H}_{\infty}$}

For $m\in\mathbb{N}$ and $T\in\mathcal{D}^{\prime}(K^{n})$, we set
\[
\left\Vert T\right\Vert _{-m}^{2}:=%
{\textstyle\int\limits_{K^{n}}}
\left[  \xi\right]  _{K}^{-m}\left\vert \widehat{T}\left(  \xi\right)
\right\vert ^{2}\left\vert d^{n}\xi\right\vert _{K}.
\]
Then $\mathcal{H}_{-m}:=\mathcal{H}_{-m}\left(  K^{n}\right)  =\left\{
T\in\mathcal{D}^{\prime}(K^{n});\left\Vert T\right\Vert _{-m}^{2}%
<\infty\right\}  $ is a complex Hilbert space. Denote by $\mathcal{H}%
_{m}^{\ast}$ the strong dual space of $\mathcal{H}_{m}$. It is useful to
suppress the correspondence between $\mathcal{H}_{m}^{\ast}$ and
$\mathcal{H}_{m}$ given by the Riesz theorem. Instead we identify
$\mathcal{H}_{m}^{\ast}$ and $\mathcal{H}_{-m}$ by associating $T\in
\mathcal{H}_{-m}$ with the functional on $\mathcal{H}_{m}$ given by
\begin{equation}
\left[  T,g\right]  :=\int\limits_{K^{n}}\overline{\widehat{T}\left(
\xi\right)  }\widehat{g}\left(  \xi\right)  \left\vert d^{n}\xi\right\vert
_{K}. \label{pairing}%
\end{equation}
Notice that
\begin{equation}
\left\vert \left[  T,g\right]  \right\vert \leq\left\Vert T\right\Vert
_{-m}\left\Vert g\right\Vert _{m}. \label{bound_T}%
\end{equation}
Now by a well-known result in the theory of countable Hilbert spaces, see e.g.
\cite[Chapter I, Section 3.1]{Gel-Vil}, $\mathcal{H}_{0}^{\ast}\subset
\mathcal{H}_{1}^{\ast}\subset\ldots\subset\mathcal{H}_{m}^{\ast}\subset\ldots$
and%
\begin{equation}
\mathcal{H}_{\infty}^{\ast}\left(  K^{n}\right)  =\mathcal{H}_{\infty}^{\ast
}=\bigcup\limits_{m\in\mathbb{N}}\mathcal{H}_{-m}=\left\{  T\in\mathcal{D}%
^{\prime}(K^{n});\left\Vert T\right\Vert _{-l}<\infty\text{, for some }%
l\in\mathbb{N}\right\}  \label{H_infinity_*}%
\end{equation}
as vector spaces. We mention that since $\mathcal{H}_{\infty}$ is a nuclear
space, cf. \cite[Lemma 3.4, Theorem 3.6]{Zuniga2016}, the weak and strong
convergence are equivalent in $\mathcal{H}_{\infty}^{\ast}$, see e.g.
\cite[Chapter I, Section 6, Theorem 6.4]{Gel-Shilov}. We consider
$\mathcal{H}_{\infty}^{\ast}$ endowed with the strong topology. On the other
hand, let $B:\mathcal{H}_{\infty}^{\ast}\times\mathcal{H}_{\infty}%
\rightarrow\mathbb{C}$ be a bilinear functional. \ Then $B$ is continuous in
each of its arguments if and only if there exist norms $\left\Vert
\cdot\right\Vert _{m}^{(a)}$ in $\mathcal{H}_{m}^{\ast}$ and $\left\Vert
\cdot\right\Vert _{l}^{(b)}$ in $\mathcal{H}_{l}$ such that $\left\vert
B\left(  T,g\right)  \right\vert \leq M\left\Vert T\right\Vert _{m}%
^{(a)}\left\Vert g\right\Vert _{l}^{(b)}$ with $M$ a positive constant
independent of $T$ and $g$, see e.g. \cite[Chapter I, Section 1.2]{Gel-Vil}
and \cite[Chapter I, Section 4.1]{Gel-Shilov}. This implies that
(\ref{pairing}) is a continuous bilinear form on $\mathcal{H}_{\infty}^{\ast
}\times\mathcal{H}_{\infty}$, which we will use as a paring between
$\mathcal{H}_{\infty}^{\ast}$ and $\mathcal{H}_{\infty}$.

\begin{remark}
The spaces $\mathcal{H}_{\infty}\subset L^{2}\subset\mathcal{H}_{\infty}%
^{\ast}$\ form a Gel'fand triple (also called a rigged Hilbert space), i.e.
$\mathcal{H}_{\infty}$ is a nuclear space which is densely and continuously
embedded in $L^{2}$ and $\left\Vert g\right\Vert _{L^{2}}^{2}=\left[
g,g\right]  $. This Gel'fand triple was introduced in \cite{Zuniga2016}.
\end{remark}

\begin{remark}
\label{Note_Dirac_dist}By the proof of Lemma \ref{Lemma2A}-(i), if
$g\in\mathcal{H}_{\infty}$, then $\widehat{g}\in L^{1}\cap L^{2}$ and by the
dominated convergence theorem, $g\left(  0\right)  =\int\widehat{g}\left\vert
d^{n}\xi\right\vert _{K}$. Consequently,
\[
\left[  \widehat{1},g\right]  =\int\limits_{K^{n}}\widehat{g}\left(
\xi\right)  \left\vert d^{n}\xi\right\vert _{K}=g\left(  0\right)  ,
\]
and thus $\widehat{1}$\ defines an element of $\mathcal{H}_{\infty}^{\ast}$,
which we identify with the Dirac distribution $\delta$, i.e. $\left[
\delta,g\right]  =g\left(  0\right)  $. In addition, $\delta\ast g=g$ for any
$g\in\mathcal{H}_{\infty}$. Indeed, take $g_{n}$ $\underrightarrow{\left\Vert
\cdot\right\Vert _{l}}$ $g$ for any $l\in\mathbb{N}$, with $\left\{
g_{n}\right\}  _{n\in\mathbb{N}}$ in $\mathcal{D}\left(  K^{n}\right)  $ and
$g\in\mathcal{H}_{\infty}$. Then $\left\Vert \delta\ast g_{n}-g\right\Vert
_{l}=\left\Vert g_{n}-g\right\Vert _{l}\rightarrow0$, since $\delta\ast
g_{n}=g_{n}$, for any $l\in\mathbb{N}$, which means that $g\rightarrow$
$\delta\ast g$ is continuous in $\mathcal{D}\left(  K^{n}\right)  $, which is
dense in $\mathcal{H}_{\infty}$.
\end{remark}

\subsection{\label{Sect.Oper_H_infinity}Pseudodifferential operators acting on
$\mathcal{H}_{\infty}$}

Let $\mathfrak{h}_{i}$ be a non-constant polynomial in $R_{K}\left[  \xi
_{1},\ldots,\xi_{n}\right]  $ of degree $d_{i}$, \ for $i=1,\ldots,r$, with
$1\leq r\leq n$, and let $\alpha_{i}$ be a complex number such that
$\operatorname{Re}(\alpha_{i})>0$ for $i=1,\ldots,r$. We set $\underline
{\mathfrak{h}}=\left(  \mathfrak{h}_{1},\ldots,\mathfrak{h}_{r}\right)  $ and
$\boldsymbol{\alpha}=\left(  \alpha_{1},\ldots,\alpha_{r}\right)  $ and attach
them the following pseudodifferential operator:%
\[%
\begin{array}
[c]{ccc}%
\mathcal{D}(K^{n}) & \rightarrow & L^{2}\cap C\\
\varphi & \rightarrow & \boldsymbol{P}\left(  \partial,\underline
{\mathfrak{h}},\boldsymbol{\alpha}\right)  \varphi,
\end{array}
\]
where $\left(  \boldsymbol{P}\left(  \partial,\underline{\mathfrak{h}%
},\boldsymbol{\alpha}\right)  \varphi\right)  \left(  x\right)  =\mathcal{F}%
_{\xi\rightarrow x}^{-1}\left(  \prod\nolimits_{i=1}^{r}\left\vert
\mathfrak{h}_{i}\left(  \xi\right)  \right\vert _{K}^{\alpha_{i}}%
\mathcal{F}_{x\rightarrow\xi}\varphi\right)  $.

\begin{notation}
For $t\in\mathbb{R}$, we denote by $\left\lceil t\right\rceil :=\min\left\{
m\in\mathbb{Z};m\geq t\right\}  $, the ceiling function.
\end{notation}

\begin{lemma}
\label{lemma3A} The mapping $\boldsymbol{P}\left(  \partial,\underline
{\mathfrak{h}},\boldsymbol{\alpha}\right)  :\mathcal{H}_{\infty}%
\rightarrow\mathcal{H}_{\infty}$ is a well-defined continuous operator between
locally convex spaces.
\end{lemma}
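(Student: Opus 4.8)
The plan is to show that $\boldsymbol{P}\left(\partial,\underline{\mathfrak{h}},\boldsymbol{\alpha}\right)$ is continuous on each Hilbert space $\mathcal{H}_{l}$ with values in $\mathcal{H}_{l}$ (or perhaps a slightly lower-index space), and then deduce continuity on the projective limit $\mathcal{H}_{\infty}$. The key observation is that, by definition, $\widehat{\boldsymbol{P}\varphi}(\xi)=\prod_{i=1}^{r}\left\vert\mathfrak{h}_{i}(\xi)\right\vert_{K}^{\alpha_{i}}\,\widehat{\varphi}(\xi)$, so everything is controlled by the size of the multiplier $m(\xi):=\prod_{i=1}^{r}\left\vert\mathfrak{h}_{i}(\xi)\right\vert_{K}^{\operatorname{Re}(\alpha_{i})}$ (the absolute value kills the imaginary parts of the $\alpha_{i}$). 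First I would record the elementary bound $\left\vert\mathfrak{h}_{i}(\xi)\right\vert_{K}\leq C_{i}\left[\xi\right]_{K}^{d_{i}}$ for $\xi\in K^{n}$: since $\mathfrak{h}_{i}\in R_{K}[\xi_{1},\dots,\xi_{n}]$ has degree $d_{i}$, each monomial $\xi^{a}$ appearing in it satisfies $\left\vert\xi^{a}\right\vert_{K}\leq\left\Vert\xi\right\Vert_{K}^{|a|}\leq\left[\xi\right]_{K}^{d_{i}}$, and by the ultrametric inequality the same bound holds for the sum, with $C_{i}=1$ in fact. Hence $m(\xi)\leq C\left[\xi\right]_{K}^{D}$ where $D=\sum_{i=1}^{r}d_{i}\lceil\operatorname{Re}(\alpha_{i})\rceil$ is a non-negative integer (using $\left[\xi\right]_{K}\geq 1$ to replace $\operatorname{Re}(\alpha_{i})$ by its ceiling).

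Next I would estimate $\left\Vert\boldsymbol{P}\varphi\right\Vert_{l}$. We have
\[
\left\Vert\boldsymbol{P}\varphi\right\Vert_{l}^{2}=\int_{K^{n}}\left[\xi\right]_{K}^{l}\,m(\xi)^{2}\,\bigl\vert\widehat{\varphi}(\xi)\bigr\vert^{2}\left\vert d^{n}\xi\right\vert_{K}\leq C^{2}\int_{K^{n}}\left[\xi\right]_{K}^{l+2D}\bigl\vert\widehat{\varphi}(\xi)\bigr\vert^{2}\left\vert d^{n}\xi\right\vert_{K}=C^{2}\left\Vert\varphi\right\Vert_{l+2D}^{2}.
\]
Thus $\boldsymbol{P}$ extends to a continuous linear map $\mathcal{H}_{l+2D}\to\mathcal{H}_{l}$ for every $l\in\mathbb{N}$. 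Since $\mathcal{D}(K^{n})$ is dense in each $\mathcal{H}_{m}$ and the estimate above is uniform in $\varphi\in\mathcal{D}(K^{n})$, the extension is well-defined and independent of $l$ on the intersection $\mathcal{H}_{\infty}=\bigcap_{m}\mathcal{H}_{m}$. Concretely, given $g\in\mathcal{H}_{\infty}$, for each $l$ pick a sequence $\varphi_{k}\to g$ in $\left\Vert\cdot\right\Vert_{l+2D}$; then $\boldsymbol{P}\varphi_{k}$ is Cauchy in $\left\Vert\cdot\right\Vert_{l}$, and its limit is the same element of $L^{2}$ regardless of $l$ (by Lemma \ref{Lemma2A}, all these completions sit inside $\mathcal{D}^{\prime}(K^{n})$, so the limits agree). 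Hence $\boldsymbol{P}g\in\bigcap_{l}\mathcal{H}_{l}=\mathcal{H}_{\infty}$.

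Finally, continuity on $\mathcal{H}_{\infty}$ with the projective-limit topology follows formally: a linear map into a projective limit is continuous iff each composition with the canonical projection is continuous, and the bound $\left\Vert\boldsymbol{P}g\right\Vert_{l}\leq C\left\Vert g\right\Vert_{l+2D}$ is exactly the statement that $\boldsymbol{P}$ followed by the inclusion $\mathcal{H}_{\infty}\hookrightarrow\mathcal{H}_{l}$ is continuous. I do not expect any serious obstacle here; the only point requiring a little care is checking that the multiplier bound $\left\vert\mathfrak{h}_{i}(\xi)\right\vert_{K}\leq\left[\xi\right]_{K}^{d_{i}}$ is correct (it is where the hypothesis $\mathfrak{h}_{i}\in R_{K}[\xi]$ is used—integral coefficients guarantee no denominators blow up the norm) and that the ceiling function is inserted so that the exponent $2D$ is an honest non-negative integer, matching the integer index set $\mathbb{N}$ of the family of norms. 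The rest is the standard "bounded multiplier on the Fourier side gives a bounded operator on the Sobolev scale" argument transported to the non-Archimedean setting.
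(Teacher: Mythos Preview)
Your proposal is correct and follows essentially the same approach as the paper: you establish the multiplier bound $\left\vert \mathfrak{h}_{i}(\xi)\right\vert_{K}^{\operatorname{Re}(\alpha_{i})}\leq\left[\xi\right]_{K}^{d_{i}\lceil\operatorname{Re}(\alpha_{i})\rceil}$ (the paper's inequality (\ref{Estimation_h})), derive the norm estimate $\left\Vert\boldsymbol{P}\varphi\right\Vert_{l}\leq\left\Vert\varphi\right\Vert_{l+2D}$ with $D=\sum_{i}d_{i}\lceil\operatorname{Re}(\alpha_{i})\rceil$, extend by density of $\mathcal{D}(K^{n})$ to get continuity $\mathcal{H}_{l+2D}\to\mathcal{H}_{l}$, and then pass to the projective limit. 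The only cosmetic difference is that the paper phrases the final continuity step via sequences in the metric $d$ whereas you invoke the universal property of the projective limit; these are equivalent.
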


\begin{proof}
The result follows from the following assertion:

\textbf{Claim.} $\boldsymbol{P}\left(  \partial,\underline{\mathfrak{h}%
},\boldsymbol{\alpha}\right)  :\mathcal{H}_{m(l)}\rightarrow\mathcal{H}_{l}$,
with $m(l):=l+2\sum_{i=1}^{r}d_{i}\left\lceil \operatorname{Re}(\alpha
_{i})\right\rceil $, defines a continuous operator between Banach spaces.

Indeed, by the Claim, if $g\in\mathcal{H}_{\infty}$, then $\boldsymbol{P}%
\left(  \partial,\underline{\mathfrak{h}},\boldsymbol{\alpha}\right)
g\in\mathcal{H}_{\infty}$. To check the continuity, we take a sequence
$\left\{  g_{k}\right\}  _{k\in\mathbb{N}}$ in $\mathcal{H}_{\infty}$ such
that $g_{k}$ $\underrightarrow{d}$ $g$, with $g\in\mathcal{H}_{\infty}$, i.e.
$g_{k}$ $\underrightarrow{\left\Vert \cdot\right\Vert _{l}}$ $g$ for any
$l\in\mathbb{N}$. By the Claim%
\[
\left\Vert \boldsymbol{P}\left(  \partial,\underline{\mathfrak{h}%
},\boldsymbol{\alpha}\right)  g_{k}-\boldsymbol{P}\left(  \partial
,\underline{\mathfrak{h}},\boldsymbol{\alpha}\right)  g\right\Vert _{l}%
\leq\left\Vert g_{k}-g\right\Vert _{m(l)}.
\]

which implies that $\boldsymbol{P}\left(  \partial,\underline{\mathfrak{h}%
},\boldsymbol{\alpha}\right)  g_{k}$ $\underrightarrow{\left\Vert
\cdot\right\Vert _{l}}$ $\boldsymbol{P}\left(  \partial,\underline
{\mathfrak{h}},\boldsymbol{\alpha}\right)  g$ for any $l\in\mathbb{N}$.

\textbf{Proof of the Claim. }By taking $\varphi\in\mathcal{D}(K^{n})$,\ and
\ using that
\begin{equation}
\left\vert \mathfrak{h}_{i}\left(  \xi\right)  \right\vert _{K}%
^{\operatorname{Re}(\alpha_{i})}\leq\left[  \xi\right]  _{K}^{d_{i}\left\lceil
\operatorname{Re}(\alpha_{i})\right\rceil }, \label{Estimation_h}%
\end{equation}
where $d_{i}$ denotes the degree of $\mathfrak{h}_{i}$, we have%
\begin{gather*}
\left\Vert \boldsymbol{P}\left(  \partial,\underline{\mathfrak{h}%
},\boldsymbol{\alpha}\right)  g\right\Vert _{l}^{2}=%
{\textstyle\int\limits_{K^{n}}}
\left[  \xi\right]  ^{l}\prod\nolimits_{i=1}^{r}\left\vert \mathfrak{h}%
_{i}\left(  \xi\right)  \right\vert _{K}^{2\operatorname{Re}(\alpha_{i}%
)}\left\vert \widehat{\varphi}\left(  \xi\right)  \right\vert ^{2}\left\vert
d^{n}\xi\right\vert _{K}\\
\leq%
{\textstyle\int\limits_{K^{n}}}
\left[  \xi\right]  ^{l+2\sum_{i=1}^{r}d_{i}\left\lceil \operatorname{Re}%
(\alpha_{i})\right\rceil }\left\vert \widehat{\varphi}\left(  \xi\right)
\right\vert ^{2}\left\vert d^{n}\xi\right\vert _{K}\leq\left\Vert
\varphi\right\Vert _{l+2\sum_{i=1}^{r}d_{i}\left\lceil \operatorname{Re}%
(\alpha_{i})\right\rceil }^{2}.
\end{gather*}
Now, from the density of $\mathcal{D}(K^{n})$ in $\mathcal{H}_{l+2\sum
_{i=1}^{r}d_{i}\left\lceil \operatorname{Re}(\alpha_{i})\right\rceil }$, we
conclude that
\[
\boldsymbol{P}\left(  \partial,\underline{\mathfrak{h}},\boldsymbol{\alpha
}\right)  :\mathcal{H}_{l+2\sum_{i=1}^{r}d_{i}\left\lceil \operatorname{Re}%
(\alpha_{i})\right\rceil }\rightarrow\mathcal{H}_{l}%
\]
defines a continuous operator between Banach spaces.
\end{proof}

\subsubsection{\label{Sec_dual_operator}Adjoint operators on $\mathcal{H}%
_{\infty}$}

By using that $\boldsymbol{P}\left(  \partial,\underline{\mathfrak{h}%
},\boldsymbol{\alpha}\right)  :\mathcal{H}_{\infty}\rightarrow\mathcal{H}%
_{\infty}$ is a continuous operator and some results on adjoint operators in
the setting of locally convex spaces, see e.g. \ \cite[Chapter VII, Section
1]{Yosida}, one gets that there exists a continuous operator $\boldsymbol{P}%
^{\ast}\left(  \partial,\underline{\mathfrak{h}},\boldsymbol{\alpha}\right)
:\mathcal{H}_{\infty}^{\ast}\rightarrow\mathcal{H}_{\infty}^{\ast}$
satisfying
\[
\left[  \boldsymbol{P}^{\ast}\left(  \partial,\underline{\mathfrak{h}%
},\boldsymbol{\alpha}\right)  T,g\right]  =\left[  T,\boldsymbol{P}\left(
\partial,\underline{\mathfrak{h}},\boldsymbol{\alpha}\right)  g\right]  \text{
}%
\]
for any $T\in\mathcal{H}_{\infty}^{\ast}$ and any $g\in\mathcal{H}_{\infty}$.
We call $\boldsymbol{P}^{\ast}$ the \textit{adjoint operator} of
$\boldsymbol{P}$.

\subsection{Some additional results}

\begin{lemma}
\label{lemma4B}Take $\underline{\mathfrak{h}}=\left(  \mathfrak{h}_{1}%
,\ldots,\mathfrak{h}_{r}\right)  $ with $\mathfrak{h}_{i}\in R_{K}\left[
\xi_{1},\ldots,\xi_{n}\right]  \smallsetminus R_{K}$, $1\leq r\leq n$, and
$\boldsymbol{\alpha=}\left(  \alpha_{1},\ldots,\alpha_{r}\right)
\in\mathbb{C}^{r}$ with $\operatorname{Re}(\alpha_{i})>0$ for any $i$, as
before, and $g\in\mathcal{H}_{\infty}$, and define%
\[
I_{g}\left(  \boldsymbol{\alpha},\underline{\mathfrak{h}}\right)
=\int\limits_{K^{n}\smallsetminus\cup_{i=1}^{r}\mathfrak{h}_{i}^{-1}\left(
0\right)  }\prod\limits_{i=1}^{r}\left\vert \mathfrak{h}_{i}\left(
\xi\right)  \right\vert _{K}^{\alpha_{i}}\widehat{g}\left(  \xi\right)
\left\vert d^{n}\xi\right\vert _{K}.
\]
Then $I_{g}\left(  \boldsymbol{\alpha},\underline{\mathfrak{h}}\right)  $
defines a $\mathcal{H}_{\infty}^{\ast}$-valued holomorphic function of
$\boldsymbol{\alpha}$ in the half-plane $\operatorname{Re}(\alpha)>0$ for
$i=1,\ldots,r$.
\end{lemma}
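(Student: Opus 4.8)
The plan is to interpret $I_g(\boldsymbol{\alpha},\underline{\mathfrak{h}})$ as the value of an element of $\mathcal{H}_\infty^*$ evaluated against $g$, and then to check the two things that make a map $\boldsymbol{\alpha}\mapsto T_{\boldsymbol{\alpha}}\in\mathcal{H}_\infty^*$ ``holomorphic'': that each $T_{\boldsymbol{\alpha}}$ genuinely lies in $\mathcal{H}_\infty^*$ (i.e. $\|T_{\boldsymbol{\alpha}}\|_{-m}<\infty$ for some $m$ locally uniformly in $\boldsymbol{\alpha}$), and that for each fixed $g$ the scalar function $\boldsymbol{\alpha}\mapsto[T_{\boldsymbol{\alpha}},g]=I_g(\boldsymbol{\alpha},\underline{\mathfrak{h}})$ is holomorphic in the stated polydisc/half-plane region. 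Since $\mathcal{H}_\infty$ is nuclear and $\mathcal{H}_\infty^*$ is the union of the Hilbert spaces $\mathcal{H}_{-m}$, weak holomorphy against all $g\in\mathcal{H}_\infty$ plus a local bound in a fixed $\mathcal{H}_{-m}$ will give $\mathcal{H}_\infty^*$-valued holomorphy (this is the standard equivalence for countably Hilbert spaces; one may also argue via weak holomorphy directly).

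First I would define, for $\boldsymbol{\alpha}$ with $\operatorname{Re}(\alpha_i)>0$, the distribution $T_{\boldsymbol{\alpha}}$ by prescribing its Fourier transform: set $\widehat{T_{\boldsymbol{\alpha}}}(\xi):=\overline{\prod_{i=1}^r |\mathfrak{h}_i(\xi)|_K^{\alpha_i}}$ off the zero locus (and $0$ on it, a null set). Comparing with the pairing (\ref{pairing}), $[T_{\boldsymbol{\alpha}},g]=\int_{K^n}\prod_i|\mathfrak{h}_i(\xi)|_K^{\alpha_i}\,\widehat{g}(\xi)\,|d^n\xi|_K=I_g(\boldsymbol{\alpha},\underline{\mathfrak{h}})$, as desired. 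To see $T_{\boldsymbol{\alpha}}\in\mathcal{H}_\infty^*$, use the characterization (\ref{H_infinity_*}): I need $\|T_{\boldsymbol{\alpha}}\|_{-m}^2=\int_{K^n}[\xi]_K^{-m}\prod_i|\mathfrak{h}_i(\xi)|_K^{2\operatorname{Re}(\alpha_i)}\,|d^n\xi|_K<\infty$ for a suitable fixed $m$. Applying the estimate (\ref{Estimation_h}), $\prod_i|\mathfrak{h}_i(\xi)|_K^{2\operatorname{Re}(\alpha_i)}\le [\xi]_K^{2\sum_i d_i\lceil\operatorname{Re}(\alpha_i)\rceil}$, so choosing $m=m(n):=n+1+2\sum_i d_i\lceil\operatorname{Re}(\alpha_i)\rceil$ (or, to get a bound uniform on a neighborhood of a given $\boldsymbol{\alpha}^0$, take the ceilings at $\boldsymbol{\alpha}^0+$ a small real vector) reduces the integral to $\int_{K^n}[\xi]_K^{-(n+1)}|d^n\xi|_K$, which is finite by the standard computation splitting over the balls $\|\xi\|_K\le 1$ and the spheres $\|\xi\|_K=q^j$, $j\ge 1$, giving a convergent geometric series since $-(n+1)+n<0$. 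This simultaneously shows $T_{\boldsymbol{\alpha}}\in\mathcal{H}_{-m}$ for a fixed $m$ on any compact subset of the region, which is the local boundedness needed.

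Next, for holomorphy of the scalar function $\boldsymbol{\alpha}\mapsto I_g(\boldsymbol{\alpha},\underline{\mathfrak{h}})$: this is an integral of a holomorphic (in $\boldsymbol{\alpha}$) integrand against a fixed integrable majorant. For fixed $\xi$ off the zero loci, $\boldsymbol{\alpha}\mapsto\prod_i|\mathfrak{h}_i(\xi)|_K^{\alpha_i}\widehat{g}(\xi)=\prod_i q^{-\alpha_i\,\mathrm{ord}(\mathfrak{h}_i(\xi))}\widehat{g}(\xi)$ is entire in each $\alpha_i$ (an exponential in $\alpha_i$). On a compact polydisc $\prod_i\{|\alpha_i-\alpha_i^0|\le\varepsilon\}$ contained in the region, the modulus is dominated by $[\xi]_K^{2\sum_i d_i\lceil\operatorname{Re}(\alpha_i^0)+\varepsilon\rceil}\,|\widehat{g}(\xi)|$, and since $g\in\mathcal{H}_\infty$ we have $[\xi]_K^{N}\widehat{g}\in L^2\cap L^1$ for every $N$ (by Lemma \ref{Lemma2A} applied with a large enough index, together with the Cauchy–Schwarz trick used there), so this majorant is in $L^1(K^n)$. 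Hence by Morera's theorem together with Fubini (or by differentiation under the integral sign, justified by the same majorant), $I_g(\boldsymbol{\alpha},\underline{\mathfrak{h}})$ is holomorphic in $\boldsymbol{\alpha}$ on the open region $\operatorname{Re}(\alpha_i)>0$, $i=1,\dots,r$. Combining the two parts: $\boldsymbol{\alpha}\mapsto T_{\boldsymbol{\alpha}}$ is weakly holomorphic with values in a fixed $\mathcal{H}_{-m}$ locally, hence (weak = strong analyticity in Fréchet/Banach duals, and $\mathcal{H}_{-m}$ is a Hilbert space) strongly holomorphic as an $\mathcal{H}_{-m}$-valued, a fortiori $\mathcal{H}_\infty^*$-valued, function.

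I expect the only genuinely delicate point to be bookkeeping the uniformity of the bound: one must fix a single $m$ (and a single $L^1$-majorant for $\widehat g$) valid on a whole neighborhood of any chosen $\boldsymbol{\alpha}^0$, which is why the ceilings should be evaluated at $\operatorname{Re}(\alpha_i^0)+\varepsilon$ rather than at $\boldsymbol{\alpha}$ itself; everything else is a routine application of (\ref{Estimation_h}), the convergence of $\int[\xi]_K^{-(n+1)}|d^n\xi|_K$, and the standard holomorphy-under-the-integral argument. No resolution of singularities is needed here — that enters only later, for the meromorphic continuation in Theorem \ref{Theorem4}.
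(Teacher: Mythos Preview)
Your proposal is correct and follows essentially the same route as the paper: both arguments rest on the estimate \eqref{Estimation_h} to control $\prod_i|\mathfrak{h}_i(\xi)|_K^{\operatorname{Re}(\alpha_i)}$ by a power of $[\xi]_K$, and both establish scalar holomorphy via an $L^1$-majorant uniform on compacta (the paper cites \cite[Lemma~5.3.1]{Igusa} for this step, together with Hartogs' separate-holomorphy reduction). The only cosmetic difference is that the paper bounds the pairing directly, $|I_g(\boldsymbol{\alpha},\underline{\mathfrak{h}})|\le C(n,l)\|g\|_{l+2\sum_i d_i\lceil\operatorname{Re}(\alpha_i)\rceil}$, whereas you compute $\|T_{\boldsymbol{\alpha}}\|_{-m}$ explicitly and then invoke weak$=$strong holomorphy; these are equivalent ways of landing in $\mathcal{H}_\infty^*$.
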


\begin{proof}
By using (\ref{Estimation_h}), we get%
\[
\left\vert I_{g}\left(  \boldsymbol{\alpha},\underline{\mathfrak{h}}\right)
\right\vert \leq\int\limits_{K^{n}}\left[  \xi\right]  _{K}^{\sum_{i=1}%
^{r}d_{i}\left\lceil \operatorname{Re}\left(  \alpha_{i}\right)  \right\rceil
}\left\vert \widehat{g}\left(  \xi\right)  \right\vert \left\vert d^{n}%
\xi\right\vert _{K},
\]
and by proof of Lemma \ref{Lemma2A}-(i),%
\[
\left\vert I_{g}\left(  \boldsymbol{\alpha},\underline{\mathfrak{h}}\right)
\right\vert \leq C(n,l)\left\Vert g\right\Vert _{l+2\sum_{i=1}^{r}%
d_{i}\left\lceil \operatorname{Re}(\alpha_{i})\right\rceil }^{2}%
\]
for any positive integer $l>n$. This implies that $I_{g}\left(
\boldsymbol{\alpha},\underline{\mathfrak{h}}\right)  $ is a $\mathcal{H}%
_{\infty}^{\ast}$-valued function if $\boldsymbol{\alpha}$ \ belongs to the
half-plane $\operatorname{Re}(\alpha_{i})>0$, $i=1,\ldots,r$. To establish the
holomorphy of $I_{g}\left(  \boldsymbol{\alpha},\underline{\mathfrak{h}%
}\right)  $, we recall that a continuous complex-valued function defined in an
open set $A\subseteq$ $\mathbb{C}^{r}$, which is holomorphic in each variable
separately, is holomorphic in $A$. Thus, it is sufficient to show that
$I_{g}\left(  \boldsymbol{\alpha},\underline{\mathfrak{h}}\right)  $ is
holomorphic in each $\alpha_{i}$. This last fact follows from a classical
argument, see e.g. \cite[Lemma 5.3.1]{Igusa}, by showing the existence of a
function $\Phi_{\mathcal{K}}\left(  \xi\right)  \in L^{1}$ such that for any
compact subset $\mathcal{K}$ of $\left\{  \alpha_{i}\in\mathbb{C}%
;\operatorname{Re}(\alpha_{i})>0\right\}  $, with $\alpha_{j}$ fixed for
$j\neq i$, it verifies that $\prod\nolimits_{i=1}^{r}\left\vert \mathfrak{h}%
_{i}\left(  \xi\right)  \right\vert _{K}^{\operatorname{Re}(\alpha_{i}%
)}\left\vert \widehat{g}\left(  \xi\right)  \right\vert \leq\Phi_{\mathcal{K}%
}\left(  \xi\right)  $ for $\alpha_{i}\in\mathcal{K}$.
\end{proof}

\begin{notation}
If $I$ is a finite set, then $\left\vert I\right\vert $ denotes its cardinality.
\end{notation}

\begin{lemma}
\label{lemma4B_1}Let $I$ and $J$ be two non-empty subsets of $\left\{
1,\ldots,n\right\}  $ such that $I\cap J=\emptyset$ and $I\cup J=\left\{
1,\ldots,n\right\}  $. Set $x=\left(  x_{1},\ldots,x_{n}\right)  =\left(
x_{I},x_{J}\right)  \in K^{\left\vert I\right\vert }\times K^{\left\vert
J\right\vert }$ with $x_{I}=\left(  x_{i}\right)  _{i\in I}$ and
$x_{J}=\left(  x_{i}\right)  _{i\in J}$. With this notation, the measure
$\left\vert d^{n}x\right\vert _{K}$ becomes the product measure of $\left\vert
d^{\left\vert I\right\vert }x_{I}\right\vert _{K}$ and $\left\vert
d^{\left\vert J\right\vert }x_{J}\right\vert _{K}$. Fix $\xi_{J}^{\left(
0\right)  }\in K^{\left\vert J\right\vert }$. Then the mapping
\[%
\begin{array}
[c]{cccc}%
\boldsymbol{P}_{J,\xi_{J}^{\left(  0\right)  }}: & \mathcal{H}_{\infty}\left(
K^{n}\right)  & \rightarrow & \mathcal{H}_{\infty}\left(  K^{\left\vert
I\right\vert }\right) \\
&  &  & \\
& g\left(  x_{I},x_{J}\right)  & \rightarrow & \int\nolimits_{K^{\left\vert
J\right\vert }}\chi\left(  x_{J}\cdot\xi_{J}^{\left(  0\right)  }\right)
g\left(  x_{I},x_{J}\right)  \left\vert d^{\left\vert J\right\vert }%
x_{J}\right\vert _{K}%
\end{array}
\]
gives rise to a well-defined linear continuous operator.
\end{lemma}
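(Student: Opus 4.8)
The plan is to verify directly that $\boldsymbol{P}_{J,\xi_J^{(0)}}$ maps each Hilbert space $\mathcal{H}_l(K^n)$ boundedly into $\mathcal{H}_l(K^{|I|})$, with a norm bound that is uniform in $l$; passing to the projective limit then yields continuity on $\mathcal{H}_\infty$. First I would work on test functions $\varphi\in\mathcal{D}(K^n)$, where the partial integral $(\boldsymbol{P}_{J,\xi_J^{(0)}}\varphi)(x_I)=\int_{K^{|J|}}\chi(x_J\cdot\xi_J^{(0)})\varphi(x_I,x_J)\,|d^{|J|}x_J|_K$ makes pointwise sense and is again a Bruhat--Schwartz function on $K^{|I|}$. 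The key computation is to express its Fourier transform: writing $\xi=(\xi_I,\xi_J)$ accordingly, one checks by Fubini (all integrands have compact support) that
\[
\mathcal{F}_{x_I\to\xi_I}\bigl(\boldsymbol{P}_{J,\xi_J^{(0)}}\varphi\bigr)(\xi_I)=\widehat{\varphi}\bigl(\xi_I,\xi_J^{(0)}\bigr),
\]
i.e. the operator is, on the Fourier side, simply \emph{restriction} of $\widehat{\varphi}$ to the slice $\xi_J=\xi_J^{(0)}$. Hence
\[
\bigl\Vert \boldsymbol{P}_{J,\xi_J^{(0)}}\varphi\bigr\Vert_{l}^{2}
=\int_{K^{|I|}}\bigl[\xi_I\bigr]_K^{\,l}\,\bigl|\widehat{\varphi}(\xi_I,\xi_J^{(0)})\bigr|^{2}\,|d^{|I|}\xi_I|_K .
\]

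Next I would bound this slice integral by a fixed $\mathcal{H}_{m}$-norm of $\varphi$ on $K^n$. This is where the real work sits: a priori a slice of an $L^2$ function need not be controlled by the full $L^2$ norm, so one must use the extra smoothing built into the norms $\Vert\cdot\Vert_m$. The point is that for $g\in\mathcal{H}_\infty(K^n)$, Lemma \ref{Lemma2A}(i) applied in the $\xi_J$ variables gives that $\xi_J\mapsto\widehat{g}(\xi_I,\xi_J)$ lies in $C_0$, and more quantitatively, running the Cauchy--Schwarz argument from the proof of Lemma \ref{Lemma2A}(i) in the $\xi_J$-variables only yields, for $l'>|J|$,
\[
\bigl|\widehat{g}(\xi_I,\xi_J^{(0)})\bigr|^{2}\le\bigl[\xi_J^{(0)}\bigr]_K^{-l'}\bigl\Vert \tfrac{1}{[\eta_J]_K^{l'/2}}\bigr\Vert_{L^2(K^{|J|})}^{2}\int_{K^{|J|}}\bigl[\xi_J\bigr]_K^{\,l'}\bigl|\widehat{g}(\xi_I,\xi_J)\bigr|^{2}|d^{|J|}\xi_J|_K,
\]
using $[\xi_I,\xi_J]_K\le[\xi_I]_K[\xi_J]_K$ together with $[\xi_J^{(0)}]_K^{-l'}\le 1$. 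Multiplying by $[\xi_I]_K^{l}$, integrating in $\xi_I$, and bounding $[\xi_I]_K^{l}[\xi_J]_K^{l'}\le[\xi]_K^{\max(l,l')}$ gives
\[
\bigl\Vert \boldsymbol{P}_{J,\xi_J^{(0)}}g\bigr\Vert_{l}^{2}\le C(|J|,l')\,\Vert g\Vert_{\max(l,l')}^{2},
\]
a bound with $l'$ a fixed integer $>|J|$ depending neither on $g$ nor on $\xi_J^{(0)}$.

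Finally I would assemble the pieces. The estimate above, valid first on $\mathcal{D}(K^n)$, shows $\boldsymbol{P}_{J,\xi_J^{(0)}}:\mathcal{H}_{\max(l,l')}(K^n)\to\mathcal{H}_l(K^{|I|})$ is bounded; since $\mathcal{D}(K^n)$ is dense in every $\mathcal{H}_m(K^n)$, the operator extends uniquely to each such Hilbert space, and the Fourier-side description (restriction to the slice) persists by continuity. Because the target exponent $l$ is arbitrary and the source exponent $\max(l,l')$ is finite, $\boldsymbol{P}_{J,\xi_J^{(0)}}$ maps $\mathcal{H}_\infty(K^n)=\bigcap_m\mathcal{H}_m(K^n)$ into $\mathcal{H}_\infty(K^{|I|})$, and continuity for the projective-limit topologies follows exactly as in the proof of Lemma \ref{lemma3A}: if $g_k\xrightarrow{\ \Vert\cdot\Vert_{m}\ }g$ for every $m$, then $\Vert\boldsymbol{P}_{J,\xi_J^{(0)}}g_k-\boldsymbol{P}_{J,\xi_J^{(0)}}g\Vert_l\le C(|J|,l')^{1/2}\Vert g_k-g\Vert_{\max(l,l')}\to0$ for every $l$. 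I expect the main obstacle to be the slice estimate: one must be careful that the Cauchy--Schwarz / Riemann--Lebesgue argument of Lemma \ref{Lemma2A}(i) is genuinely applied \emph{fiberwise} in the $\xi_J$ variables (so that the resulting bound is uniform in both $\xi_J^{(0)}$ and $\xi_I$) and that the submultiplicativity $[\xi]_K\le[\xi_I]_K[\xi_J]_K$ of $[\cdot]_K$ is used correctly to recombine the two exponents into a single $\mathcal{H}_m$-norm on $K^n$; everything else is routine density and projective-limit bookkeeping.
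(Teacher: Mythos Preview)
Your overall architecture matches the paper's: use $g\in L^{1}$ and Fubini to make sense of the partial integral, identify $\mathcal{F}_{x_I\to\xi_I}\bigl(\boldsymbol{P}_{J,\xi_J^{(0)}}g\bigr)(\xi_I)$ with the slice $\widehat{g}\bigl(\xi_I,\pm\xi_J^{(0)}\bigr)$ of the full Fourier transform, and then bound the resulting weighted $L^{2}$-norm in $\xi_I$ by some $\Vert g\Vert_{m}$. The paper does precisely these steps and then simply asserts
\[
\int_{K^{|I|}}[\xi_I]_K^{\,l}\bigl|\widehat{g}(\xi_I,\xi_J^{(0)})\bigr|^{2}\,|d^{|I|}\xi_I|_K
\;\le\;
\int_{K^{n}}[\xi]_K^{\,l}\bigl|\widehat{g}(\xi)\bigr|^{2}\,|d^{n}\xi|_K
=\Vert g\Vert_{l}^{2}.
\]
You correctly sense that a slice of an $L^{2}$ function is not automatically controlled by the full norm and try to supply the missing argument via a fiberwise Cauchy--Schwarz; in that sense your plan is more careful than the paper's.

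However, the slice estimate you write down is not what Lemma~\ref{Lemma2A}(i) provides. That argument bounds $\Vert f\Vert_{L^\infty}$ by $\Vert\widehat{f}\Vert_{L^1}\le C\bigl(\int[\xi]^{l}|\widehat{f}|^{2}\bigr)^{1/2}$, i.e.\ by a weighted $L^{2}$-norm of the \emph{Fourier transform} of $f$. Applied with $f(\xi_J)=\widehat{g}(\xi_I,\xi_J)$ it would control $|\widehat{g}(\xi_I,\xi_J^{(0)})|$ by a weighted $L^{2}$-norm of $\mathcal{F}_{\xi_J}\bigl(\widehat{g}(\xi_I,\cdot)\bigr)$, which is not dominated by any $\Vert g\Vert_{m}$. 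Your displayed inequality, bounding a pointwise value of $F(\xi_J)=\widehat{g}(\xi_I,\xi_J)$ by $C\int[\xi_J]^{l'}|F|^{2}$, is false: take $\widehat{g}(\xi_I,\xi_J)=f(\xi_I)\,e^{-A\Vert\xi_J\Vert_K^{\alpha}}$ with $f\in\mathcal{D}(K^{|I|})\setminus\{0\}$ supported in $R_K^{|I|}$ and $\alpha>0$. One checks $g\in\mathcal{H}_\infty(K^{n})$. At $\xi_J^{(0)}=0$ the left side is $|f(\xi_I)|^{2}$, while the right side is $|f(\xi_I)|^{2}$ times $C\int[\xi_J]^{l'}e^{-2A\Vert\xi_J\Vert^{\alpha}}d\xi_J\to 0$ as $A\to\infty$. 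The same family $g_A$ shows that the paper's asserted inequality, and in fact the continuity claim itself, is problematic as written: $\Vert g_A\Vert_{m}\to 0$ for every $m\in\mathbb{N}$, yet $\boldsymbol{P}_{J,0}g_A=\mathcal{F}_{\xi_I}^{-1}(f)$ is a fixed nonzero element of $\mathcal{H}_\infty(K^{|I|})$. So the ``main obstacle'' you anticipated is genuine, and neither your Cauchy--Schwarz argument nor the paper's one-line bound closes it.
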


\begin{proof}
By using that $g\left(  x_{I},x_{J}\right)  \in L^{1}\left(  K^{n},\left\vert
d^{n}x\right\vert _{K}\right)  $, cf. \cite[Theorem 3.15-(ii)]{Zuniga2016},
and applying Fubini's theorem, $g\left(  x_{I},x_{J}\right)  \in L^{1}\left(
K^{\left\vert J\right\vert },\left\vert d^{\left\vert J\right\vert }%
x_{J}\right\vert _{K}\right)  $ for almost all the $x_{I}$'s. Thus
\[
\mathcal{F}_{\substack{x_{I}\rightarrow\xi_{I}\\x_{J}\rightarrow\xi_{J}%
}}\left(  g\right)  {\LARGE \mid}_{\substack{\\\xi_{J}=\xi_{J}^{\left(
0\right)  }}}=\mathcal{F}_{x_{I}\rightarrow\xi_{I}}\left(  \boldsymbol{P}%
_{J,\xi_{J}^{\left(  0\right)  }}g\right)  .
\]
Now,
\begin{align*}
\left\Vert \boldsymbol{P}_{J,\xi_{J}^{\left(  0\right)  }}g\right\Vert
_{l}^{2}  &  =\int\nolimits_{K^{\left\vert I\right\vert }}\left[  \xi
_{I}\right]  _{K}^{l}\left\vert \mathcal{F}_{x_{I}\rightarrow\xi_{I}}\left(
\boldsymbol{P}_{J,\xi_{J}^{\left(  0\right)  }}g\right)  \right\vert
^{2}\left\vert d^{\left\vert I\right\vert }\xi_{I}\right\vert _{K}\\
&  =\int\nolimits_{K^{\left\vert I\right\vert }}\left[  \xi_{I}\right]
_{K}^{l}\left\vert \widehat{g}\left(  \xi_{I},\xi_{J}^{\left(  0\right)
}\right)  \right\vert ^{2}\left\vert d^{\left\vert I\right\vert }\xi
_{I}\right\vert _{K}\leq\int\nolimits_{K^{n}}\left[  \xi\right]  _{K}%
^{l}\left\vert \widehat{g}\left(  \xi_{I},\xi_{J}\right)  \right\vert
^{2}\left\vert d^{n}\xi\right\vert _{K}\\
&  =\left\Vert g\right\Vert _{l}^{2}<\infty
\end{align*}
for any $l\in\mathbb{N}$, which implies that $\boldsymbol{P}_{J,\xi
_{J}^{\left(  0\right)  }}$ is a continuous operator.
\end{proof}

\begin{lemma}
\label{lemma4C}Fix two subsets $I$, $J$ of $\left\{  1,\ldots,n\right\}  $
satisfying $I\cap J=\emptyset$ and $I\cup J=\left\{  1,\ldots,n\right\}  $.
Set $\boldsymbol{\alpha}_{I}=\left(  \alpha_{i}\right)  _{i\in I}\in
\mathbb{C}^{\left\vert I\right\vert }$ and $\boldsymbol{\beta}_{J}=\left(
\beta_{i}\right)  _{i\in J}\in\mathbb{C}^{\left\vert J\right\vert }$. Assume
that $\operatorname{Re}\left(  \alpha_{i}\right)  >0$ for $i\in I$ and
$\operatorname{Re}\left(  \beta_{i}\right)  >0$ for $i\in J$. Set for
$g\in\mathcal{H}_{\infty}$,%
\[
E_{\widehat{g}}\left(  \boldsymbol{\alpha}_{I},\boldsymbol{\beta}_{J}\right)
:=\int\limits_{K^{n}\smallsetminus\left\{  0\right\}  }\frac{\prod
\limits_{i\in I}\left\vert \xi_{i}\right\vert _{K}^{\alpha_{i}}}%
{\prod\limits_{i\in J}\left\vert \xi_{i}\right\vert _{K}^{\beta_{i}}}%
\widehat{g}\left(  \xi\right)  \left\vert d^{n}\xi\right\vert _{K},
\]
with the convention that $%
{\textstyle\prod\nolimits_{i\in\emptyset}}
\cdot=1$. Then $E_{\widehat{g}}\left(  \boldsymbol{\alpha}_{I}%
,\boldsymbol{\beta}_{J}\right)  $ gives rise to a $\mathcal{H}_{\infty}^{\ast
}$-valued function which is holomorphic in $\boldsymbol{\alpha}_{I}$ and
$\boldsymbol{\beta}_{J}$\ in the open set \ $\operatorname{Re}\left(
\alpha_{i}\right)  >0$ for $i\in I$ and $0<\operatorname{Re}\left(  \beta
_{i}\right)  <1$ for $i\in J$.
\end{lemma}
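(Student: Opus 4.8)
The plan is to follow the template of the proofs of Lemmas \ref{lemma4B} and \ref{lemma4B_1}: first obtain an estimate $\left\vert E_{\widehat{g}}(\boldsymbol{\alpha}_{I},\boldsymbol{\beta}_{J})\right\vert \leq C(\boldsymbol{\alpha}_{I},\boldsymbol{\beta}_{J})\left\Vert g\right\Vert _{l}$ with $C$ locally bounded in the parameters, which shows that $g\mapsto E_{\widehat{g}}(\boldsymbol{\alpha}_{I},\boldsymbol{\beta}_{J})$ is a continuous linear functional, hence an element of $\mathcal{H}_{\infty}^{\ast}$ by (\ref{H_infinity_*}); and then deduce holomorphy from the separate--holomorphy criterion together with a dominating $L^{1}$ function, exactly as in Lemma \ref{lemma4B}.

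For the estimate, first I would use $\left\vert \xi_{i}\right\vert _{K}^{\operatorname{Re}(\alpha_{i})}\leq\left[ \xi\right] _{K}^{\left\lceil \operatorname{Re}(\alpha_{i})\right\rceil }$, cf. (\ref{Estimation_h}), to reduce to
\[
\left\vert E_{\widehat{g}}(\boldsymbol{\alpha}_{I},\boldsymbol{\beta}_{J})\right\vert \leq\int\limits_{K^{n}\smallsetminus\left\{ 0\right\} }\frac{\left[ \xi\right] _{K}^{A}}{\prod_{j\in J}\left\vert \xi_{j}\right\vert _{K}^{\operatorname{Re}(\beta_{j})}}\left\vert \widehat{g}(\xi)\right\vert \left\vert d^{n}\xi\right\vert _{K},\qquad A:=\sum_{i\in I}\left\lceil \operatorname{Re}(\alpha_{i})\right\rceil .
\]
Since $\operatorname{Re}(\beta_{j})>0$, for each $j\in J$ one has $\left\vert \xi_{j}\right\vert _{K}^{-\operatorname{Re}(\beta_{j})}\leq\left\vert \xi_{j}\right\vert _{K}^{-\operatorname{Re}(\beta_{j})}\mathbf{1}_{R_{K}}(\xi_{j})+1$, and expanding the product over $j\in J$ gives
\[
\prod_{j\in J}\left\vert \xi_{j}\right\vert _{K}^{-\operatorname{Re}(\beta_{j})}\leq\sum_{S\subseteq J}\ \prod_{j\in S}\left\vert \xi_{j}\right\vert _{K}^{-\operatorname{Re}(\beta_{j})}\mathbf{1}_{R_{K}}(\xi_{j}).
\]
Fixing $S\subseteq J$, writing $S^{c}=\left\{ 1,\ldots,n\right\} \smallsetminus S$ (so $I\subseteq S^{c}$) and $\xi=(\xi_{S},\xi_{S^{c}})$, and noting that $\left[ \xi\right] _{K}=\left[ \xi_{S^{c}}\right] _{K}$ on the region $\xi_{S}\in R_{K}^{\left\vert S\right\vert }$, the $S$-summand of the bounding integral is, by Tonelli's theorem, at most
\[
\int\limits_{R_{K}^{\left\vert S\right\vert }}\prod_{j\in S}\left\vert \xi_{j}\right\vert _{K}^{-\operatorname{Re}(\beta_{j})}\left( \ \int\limits_{K^{\left\vert S^{c}\right\vert }}\left[ \xi_{S^{c}}\right] _{K}^{A}\left\vert \widehat{g}(\xi_{S},\xi_{S^{c}})\right\vert \left\vert d\xi_{S^{c}}\right\vert _{K}\right) \left\vert d\xi_{S}\right\vert _{K}.
\]

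The hard part is to bound the inner integral \emph{uniformly} in $\xi_{S}$, and this is where Lemma \ref{lemma4B_1} enters (naive Cauchy--Schwarz in all variables fails, because $\left\vert \xi_{j}\right\vert _{K}^{-\operatorname{Re}(\beta_{j})}$ is not locally $L^{2}$ once $\operatorname{Re}(\beta_{j})\geq1/2$). Applying Lemma \ref{lemma4B_1} with the roles of $I,J$ there played by $S^{c},S$, the slice $\widehat{g}(\xi_{S},\cdot)$ is, up to the sign of $\xi_{S}$, the Fourier transform of $\boldsymbol{P}_{S,\xi_{S}}g\in\mathcal{H}_{\infty}(K^{\left\vert S^{c}\right\vert })$, and from the proof of Lemma \ref{lemma4B_1} one has $\left\Vert \boldsymbol{P}_{S,\xi_{S}}g\right\Vert _{m}\leq\left\Vert g\right\Vert _{m}$ for every $m$, uniformly in $\xi_{S}$. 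Then the Cauchy--Schwarz estimate used in the proof of Lemma \ref{Lemma2A}, applied on $K^{\left\vert S^{c}\right\vert }$ with any fixed integer $l>n\geq\left\vert S^{c}\right\vert $, gives
\[
\int\limits_{K^{\left\vert S^{c}\right\vert }}\left[ \xi_{S^{c}}\right] _{K}^{A}\left\vert \widehat{g}(\xi_{S},\xi_{S^{c}})\right\vert \left\vert d\xi_{S^{c}}\right\vert _{K}\leq C(n,l)\left\Vert \boldsymbol{P}_{S,\xi_{S}}g\right\Vert _{l+2A}\leq C(n,l)\left\Vert g\right\Vert _{l+2A},
\]
with $C(n,l)$ independent of $\xi_{S}$. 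Since $\int_{R_{K}}\left\vert \xi_{j}\right\vert _{K}^{-\operatorname{Re}(\beta_{j})}\left\vert d\xi_{j}\right\vert _{K}=\frac{1-q^{-1}}{1-q^{\operatorname{Re}(\beta_{j})-1}}<\infty$ precisely because $\operatorname{Re}(\beta_{j})<1$, summing over $S\subseteq J$ yields
\[
\left\vert E_{\widehat{g}}(\boldsymbol{\alpha}_{I},\boldsymbol{\beta}_{J})\right\vert \leq C(\boldsymbol{\alpha}_{I},\boldsymbol{\beta}_{J},n)\left\Vert g\right\Vert _{l+2\sum_{i\in I}\left\lceil \operatorname{Re}(\alpha_{i})\right\rceil },
\]
where the constant depends continuously on the parameters and stays bounded on every compact subset of $\left\{ \operatorname{Re}(\alpha_{i})>0\ (i\in I),\ 0<\operatorname{Re}(\beta_{j})<1\ (j\in J)\right\} $. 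By (\ref{H_infinity_*}) and the continuity criterion recalled after (\ref{bound_T}), this shows that $E_{\widehat{g}}(\boldsymbol{\alpha}_{I},\boldsymbol{\beta}_{J})$ is a well-defined $\mathcal{H}_{\infty}^{\ast}$-valued function on that region.

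For holomorphy, I would fix $g\in\mathcal{H}_{\infty}$ and proceed as in Lemma \ref{lemma4B}: for each $\xi\neq0$ the integrand is entire in every $\alpha_{i}$ $(i\in I)$ and every $\beta_{j}$ $(j\in J)$ separately, being a product of factors $\left\vert \xi_{i}\right\vert _{K}^{\alpha_{i}}=\exp(\alpha_{i}\log\left\vert \xi_{i}\right\vert _{K})$ and $\left\vert \xi_{j}\right\vert _{K}^{-\beta_{j}}$; the estimate above provides, on any compact subset of the parameter domain, an $L^{1}$-dominating function; so the classical argument (e.g. \cite[Lemma 5.3.1]{Igusa}) gives holomorphy in each variable separately, and this, together with the joint continuity of $(\boldsymbol{\alpha}_{I},\boldsymbol{\beta}_{J})\mapsto E_{\widehat{g}}(\boldsymbol{\alpha}_{I},\boldsymbol{\beta}_{J})$ (dominated convergence), yields joint holomorphy. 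As this holds for every $g\in\mathcal{H}_{\infty}$, the $\mathcal{H}_{\infty}^{\ast}$-valued function $(\boldsymbol{\alpha}_{I},\boldsymbol{\beta}_{J})\mapsto E_{\widehat{g}}(\boldsymbol{\alpha}_{I},\boldsymbol{\beta}_{J})$ is holomorphic on the stated open set, as in the proof of Lemma \ref{lemma4B}.
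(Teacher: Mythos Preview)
Your proof is correct and uses the same key ingredients as the paper's: slice off the coordinates carrying negative exponents via Lemma~\ref{lemma4B_1}, then apply Cauchy--Schwarz on the remaining variables as in the proof of Lemma~\ref{Lemma2A}(i). The organization is different, and somewhat cleaner. The paper partitions $K^{n}$ into the $2^{n}$ orthants $A_{L}=\{\xi:|\xi_{i}|_{K}>1\Leftrightarrow i\in L\}$ and then distinguishes five cases according to how $L$ meets $I$ and $J$. You instead first absorb all the $I$-factors into $[\xi]_{K}^{A}$ using \eqref{Estimation_h}, and then expand $\prod_{j\in J}|\xi_{j}|_{K}^{-\operatorname{Re}(\beta_{j})}$ via the pointwise bound $|\xi_{j}|_{K}^{-\operatorname{Re}(\beta_{j})}\le |\xi_{j}|_{K}^{-\operatorname{Re}(\beta_{j})}\mathbf{1}_{R_{K}}(\xi_{j})+1$, which produces only $2^{|J|}$ terms indexed by $S\subseteq J$, all handled by one uniform argument (slice over $S$, Cauchy--Schwarz over $S^{c}$). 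Your observation that $[\xi]_{K}=[\xi_{S^{c}}]_{K}$ on $\{\xi_{S}\in R_{K}^{|S|}\}$ is exactly what makes this work without the paper's case analysis. The holomorphy step (locally uniform $L^{1}$ domination plus separate holomorphy, then Hartogs) matches the paper's.
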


\begin{proof}
We first consider the case $J=\emptyset$,
\[
E_{\widehat{g}}\left(  \boldsymbol{\alpha}_{I}\right)  =\int\limits_{K^{n}%
\smallsetminus\left\{  0\right\}  }\prod\limits_{i\in I}\left\vert \xi
_{i}\right\vert _{K}^{\alpha_{i}}\widehat{g}\left(  \xi\right)  \left\vert
d^{n}\xi\right\vert _{K}.
\]
By Lemma \ref{lemma4B}, $E_{\widehat{g}}\left(  \boldsymbol{\alpha}%
_{I}\right)  $ defines a $\mathcal{H}_{\infty}^{\ast}$-valued function which
is holomorphic in $\boldsymbol{\alpha}_{I}$ in the open set
\ $\operatorname{Re}\left(  \alpha_{i}\right)  >0$ for $i\in I$.

We assume that $J\neq\emptyset$. \ For $L\subseteq\left\{  1,\ldots,n\right\}
$, we define%
\[
A_{L}=\left\{  \xi=\left(  \xi_{1},\ldots,\xi_{n}\right)  \in\mathbb{C}%
^{n};\left\vert \xi_{i}\right\vert _{K}>1\Leftrightarrow i\in L\right\}  .
\]
Then
\[
K^{n}=\bigsqcup\limits_{L\subseteq\left\{  1,\ldots,n\right\}  }A_{L}\text{
and }E_{\widehat{g}}\left(  \boldsymbol{\alpha}_{I},\boldsymbol{\beta}%
_{J}\right)  =\sum\limits_{L\subseteq\left\{  1,\ldots,n\right\}  }%
E_{\widehat{g}}^{\left(  L\right)  }\left(  \boldsymbol{\alpha}_{I}%
,\boldsymbol{\beta}_{J}\right)
\]
where
\[
E_{\widehat{g}}^{\left(  L\right)  }\left(  \boldsymbol{\alpha}_{I}%
,\boldsymbol{\beta}_{J}\right)  :=\int\limits_{A_{L}\smallsetminus\left\{
0\right\}  }\frac{\prod\limits_{i\in I}\left\vert \xi_{i}\right\vert
_{K}^{\alpha_{i}}}{\prod\limits_{i\in J}\left\vert \xi_{i}\right\vert
_{K}^{\beta_{i}}}\widehat{g}\left(  \xi\right)  \left\vert d^{n}\xi\right\vert
_{K}.
\]
The proof is accomplished by showing that each functional $E_{\widehat{g}%
}^{\left(  L\right)  }\left(  \boldsymbol{\alpha}_{I},\boldsymbol{\beta}%
_{J}\right)  $ satisfies the requirements announced for $E_{\widehat{g}%
}\left(  \boldsymbol{\alpha}_{I},\boldsymbol{\beta}_{J}\right)  $.

\textbf{Case 1. }If $L=\emptyset$, $A_{L}=R_{K}^{n}$.

In this case, with the notation of Lemma \ref{lemma4B_1}, we have%
\begin{multline*}
E_{\widehat{g}}^{\left(  L\right)  }\left(  \boldsymbol{\alpha}_{I}%
,\boldsymbol{\beta}_{J}\right)  =\int\limits_{R_{K}^{n}\smallsetminus\left\{
0\right\}  }\frac{\prod\limits_{i\in I}\left\vert \xi_{i}\right\vert
_{K}^{\alpha_{i}}}{\prod\limits_{i\in J}\left\vert \xi_{i}\right\vert
_{K}^{\beta_{i}}}\widehat{g}\left(  \xi\right)  \left\vert d^{n}\xi\right\vert
_{K}\\
=\int\limits_{R_{K}^{\left\vert J\right\vert }\smallsetminus\left\{
0\right\}  }\frac{1}{\prod\limits_{i\in J}\left\vert \xi_{i}\right\vert
_{K}^{\beta_{i}}}\left\{  \int\limits_{R_{K}^{\left\vert I\right\vert }}%
\prod\limits_{i\in I}\left\vert \xi_{i}\right\vert _{K}^{\alpha_{i}%
}\mathcal{F}_{x_{I}\rightarrow\xi_{I}}\left(  \boldsymbol{P}_{J,\xi_{J}%
}g\right)  \left\vert d^{\left\vert I\right\vert }\xi_{I}\right\vert
_{K}\right\}  \left\vert d^{\left\vert J\right\vert }\xi_{J}\right\vert _{K},
\end{multline*}
and%
\begin{multline*}
\left\vert E_{\widehat{g}}^{\left(  L\right)  }\left(  \boldsymbol{\alpha}%
_{I},\boldsymbol{\beta}_{J}\right)  \right\vert \leq\int\limits_{R_{K}%
^{\left\vert J\right\vert }\smallsetminus\left\{  0\right\}  }\frac{1}%
{\prod\limits_{i\in J}\left\vert \xi_{i}\right\vert _{K}^{\operatorname{Re}%
(\beta_{i})}}\times\\
\left\{  \int\limits_{R_{K}^{\left\vert I\right\vert }}\left\vert
\mathcal{F}_{x_{I}\rightarrow\xi_{I}}\left(  \boldsymbol{P}_{J,\xi_{J}%
}g\right)  \right\vert \left\vert d^{\left\vert I\right\vert }\xi
_{I}\right\vert _{K}\right\}  \left\vert d^{\left\vert J\right\vert }\xi
_{J}\right\vert _{K}\\
\leq\left\{  \int\limits_{R_{K}^{\left\vert J\right\vert }\smallsetminus
\left\{  0\right\}  }\frac{\left\vert d^{\left\vert J\right\vert }\xi
_{J}\right\vert _{K}}{\prod\limits_{i\in J}\left\vert \xi_{i}\right\vert
_{K}^{\operatorname{Re}(\beta_{i})}}\right\}  \sup_{\xi_{J}\in K^{\left\vert
J\right\vert }}\int\limits_{R_{K}^{\left\vert I\right\vert }}\left\vert
\mathcal{F}_{x_{I}\rightarrow\xi_{I}}\left(  \boldsymbol{P}_{J,\xi_{J}%
}g\right)  \right\vert \left\vert d^{\left\vert I\right\vert }\xi
_{I}\right\vert _{K}\\
\leq\left\{  \int\limits_{R_{K}^{\left\vert J\right\vert }\smallsetminus
\left\{  0\right\}  }\frac{\left\vert d^{\left\vert J\right\vert }\xi
_{J}\right\vert _{K}}{\prod\limits_{i\in J}\left\vert \xi_{i}\right\vert
_{K}^{\operatorname{Re}(\beta_{i})}}\right\}  \sup_{\xi_{J}\in K^{\left\vert
J\right\vert }}\int\limits_{R_{K}^{\left\vert I\right\vert }}\frac{1}{\left[
\xi_{I}\right]  _{K}^{\frac{l}{2}}}\left[  \xi_{I}\right]  _{K}^{\frac{l}{2}%
}\left\vert \mathcal{F}_{x_{I}\rightarrow\xi_{I}}\left(  \boldsymbol{P}%
_{J,\xi_{J}}g\right)  \right\vert \left\vert d^{\left\vert I\right\vert }%
\xi_{I}\right\vert _{K}.
\end{multline*}
Now, by applying Cauchy-Schwarz inequality and Lemma \ref{lemma4B_1},
\begin{gather*}
\left\vert E_{\widehat{g}}^{\left(  L\right)  }\left(  \boldsymbol{\alpha}%
_{I},\boldsymbol{\beta}_{J}\right)  \right\vert \leq\left\{  \int
\limits_{R_{K}^{\left\vert J\right\vert }\smallsetminus\left\{  0\right\}
}\frac{\left\vert d^{\left\vert J\right\vert }\xi_{J}\right\vert _{K}}%
{\prod\limits_{i\in J}\left\vert \xi_{i}\right\vert _{K}^{\operatorname{Re}%
(\beta_{i})}}\right\}  \times\\
\sup_{\xi_{J}\in K^{\left\vert J\right\vert }}\sqrt{\int\limits_{R_{K}%
^{\left\vert I\right\vert }}\frac{\left\vert d^{\left\vert I\right\vert }%
\xi_{I}\right\vert _{K}}{\left[  \xi_{I}\right]  _{K}^{l}}}\sqrt
{\int\limits_{R_{K}^{\left\vert I\right\vert }}\left[  \xi_{I}\right]
_{K}^{l}\left\vert \mathcal{F}\left(  \boldsymbol{P}_{J,\xi_{J}}g\right)
\right\vert ^{2}\left\vert d^{\left\vert I\right\vert }\xi_{I}\right\vert
_{K}}\\
\leq\left\{  \int\limits_{R_{K}^{\left\vert J\right\vert }\smallsetminus
\left\{  0\right\}  }\frac{\left\vert d^{\left\vert J\right\vert }\xi
_{J}\right\vert _{K}}{\prod\limits_{i\in J}\left\vert \xi_{i}\right\vert
_{K}^{\operatorname{Re}(\beta_{i})}}\right\}  \sup_{\xi_{J}\in K^{\left\vert
J\right\vert }}\sqrt{\int\limits_{K^{n}}\left[  \xi\right]  _{K}^{l}\left\vert
\widehat{g}\left(  \xi\right)  \right\vert ^{2}\left\vert d^{n}\xi\right\vert
_{K}}\\
\leq\left\{  \int\limits_{R_{K}^{\left\vert J\right\vert }\smallsetminus
\left\{  0\right\}  }\frac{\left\vert d^{\left\vert J\right\vert }\xi
_{J}\right\vert _{K}}{\prod\limits_{i\in J}\left\vert \xi_{i}\right\vert
_{K}^{\operatorname{Re}(\beta_{i})}}\right\}  \left\Vert g\right\Vert _{l}.
\end{gather*}
We now use that, if $0<\operatorname{Re}\left(  \beta_{i}\right)  <1$ then
$\frac{1}{\left\vert \xi_{i}\right\vert _{K}^{\operatorname{Re}\left(
\beta_{i}\right)  }}\in L^{1}\left(  R_{K},\left\vert d\xi_{i}\right\vert
_{K}\right)  $, to conclude that
\begin{equation}
\left\vert E_{\widehat{g}}^{\left(  L\right)  }\left(  \boldsymbol{\alpha}%
_{I},\boldsymbol{\beta}_{J}\right)  \right\vert \leq C(\boldsymbol{\beta}%
_{J})\left\Vert g\right\Vert _{l} \label{Eq2}%
\end{equation}
for any $l\in\mathbb{N}$. This implies that $E_{\widehat{g}}^{\left(
L\right)  }\left(  \boldsymbol{\alpha}_{I},\boldsymbol{\beta}_{J}\right)  $ is
a $\mathcal{H}_{\infty}^{\ast}$-valued function for $\boldsymbol{\beta}_{J}$
in the set $0<\operatorname{Re}\left(  \beta_{i}\right)  <1$ for $i\in J$ and
$\operatorname{Re}\left(  \alpha_{i}\right)  >0$ for $i\in I$. In order to
show that $E_{\widehat{g}}^{\left(  L\right)  }\left(  \boldsymbol{\alpha}%
_{I},\boldsymbol{\beta}_{J}\right)  $ is holomorphic in $\left(
\boldsymbol{\alpha}_{I},\boldsymbol{\beta}_{J}\right)  $ in
$\boldsymbol{\alpha}_{I},\boldsymbol{\beta}_{J}$ in the set
$0<\operatorname{Re}\left(  \beta_{i}\right)  <1$ for $i\in J$ and
$\operatorname{Re}\left(  \alpha_{i}\right)  >0$ for $i\in I$, we show that
$E_{\widehat{g}}^{\left(  L\right)  }\left(  \boldsymbol{\alpha}%
_{I},\boldsymbol{\beta}_{J}\right)  $ is holomorphic in each variable
separately. This fact is established by using (\ref{Eq2}) and Lemma 5.3.1 in
\cite{Igusa}.

\textbf{Case 2. }If $L=\left\{  1,\ldots,n\right\}  $, $A_{L}=\left\{  \xi\in
K^{n};\left\vert \xi_{i}\right\vert _{K}>1\text{ for }i=1,\ldots,n\right\}  $.

In this case%
\begin{align*}
\left\vert E_{\widehat{g}}^{\left(  L\right)  }\left(  \boldsymbol{\alpha}%
_{I},\boldsymbol{\beta}_{J}\right)  \right\vert  &  \leq\int\limits_{A_{L}%
}\prod\limits_{i\in I}\left\vert \xi_{i}\right\vert _{K}^{\operatorname{Re}%
\left(  \alpha_{i}\right)  }\left\vert \widehat{g}\left(  \xi\right)
\right\vert \left\vert d^{n}\xi\right\vert _{K}\\
&  \leq\int\limits_{A_{L}}\left[  \xi\right]  _{K}^{\sum_{i\in I}\left\lceil
\operatorname{Re}\left(  \alpha_{i}\right)  \right\rceil }\left\vert
\widehat{g}\left(  \xi\right)  \right\vert \left\vert d^{n}\xi\right\vert
_{K}\leq C(l,n)\left\Vert g\right\Vert _{l+2\sum_{i\in I}\left\lceil
\operatorname{Re}\left(  \alpha_{i}\right)  \right\rceil ,}%
\end{align*}
for any positive integer $l>n$, which implies that $E_{\widehat{g}}^{\left(
L\right)  }\left(  \boldsymbol{\alpha}_{I}\right)  $ is a $\mathcal{H}%
_{\infty}^{\ast}$-valued function in the set $\operatorname{Re}\left(
\alpha_{i}\right)  >0$ for $i\in I$.

We now analyze the case where $L$ is a non-empty and proper subset from
$\left\{  1,\ldots,n\right\}  $ and $J\neq\emptyset$.

\textbf{Case 3. }If $J\cap L=\emptyset$, i.e. $L\subseteq I$.

In this case, by proceeding as in the proof of Case 1, and using that
\[
\int_{K^{\left\vert L\right\vert }}\frac{\left\vert d^{\left\vert L\right\vert
}\xi_{L}\right\vert _{K}}{\left[  \xi\right]  _{K}^{l}}<\infty\text{ for
}l>n\text{,}%
\]
one gets that%
\[
\left\vert E_{\widehat{g}}^{\left(  L\right)  }\left(  \boldsymbol{\alpha}%
_{I},\boldsymbol{\beta}_{J}\right)  \right\vert \leq C(\boldsymbol{\beta}%
_{J},l,n)\left\Vert g\right\Vert _{l+2\sum_{i\in I}\left\lceil
\operatorname{Re}(\alpha_{i})\right\rceil },
\]
which implies that $E_{\widehat{g}}^{\left(  L\right)  }\left(
\boldsymbol{\alpha}_{I},\boldsymbol{\beta}_{J}\right)  $ is a $\mathcal{H}%
_{\infty}^{\ast}$-valued function for $\boldsymbol{\beta}_{J}$ in the set
$0<\operatorname{Re}\left(  \beta_{i}\right)  <1$ for $i\in J$ and
$\operatorname{Re}\left(  \alpha_{i}\right)  >0$ for $i\in I$. The
verification that $E_{\widehat{g}}^{\left(  L\right)  }\left(
\boldsymbol{\alpha}_{I},\boldsymbol{\beta}_{J}\right)  $ is holomorphic in
$\boldsymbol{\alpha}_{I},\boldsymbol{\beta}_{J}$ in the set
$0<\operatorname{Re}\left(  \beta_{i}\right)  <1$ for $i\in J$ and
$\operatorname{Re}\left(  \alpha_{i}\right)  >0$ for $i\in I$ is done like in
\ Case 1.

\textbf{Case 4. } $J\cap L\neq\emptyset$ and $I\cap L=\emptyset$ (i.e.
$L\subseteq J$) and $M:=J\smallsetminus L$.

In this case, taking $\left\{  1,\ldots,n\right\}  =L%
{\textstyle\bigsqcup}
L^{\prime}$ and
\begin{equation}
A_{L}\smallsetminus\left\{  0\right\}  =\left(  R_{K}^{\left\vert L^{\prime
}\right\vert }\smallsetminus\left\{  0\right\}  \right)
{\textstyle\bigsqcup}
\left(  K^{\left\vert L\right\vert }\smallsetminus\left\{  0\right\}  \right)
, \label{partition}%
\end{equation}
and using the notation of Lemma \ref{lemma4B_1}, $E_{\widehat{g}}^{\left(
L\right)  }\left(  \boldsymbol{\alpha}_{I},\boldsymbol{\beta}_{J}\right)
$\ equals
\[
\int\limits_{R_{K}^{\left\vert L^{\prime}\right\vert }\smallsetminus\left\{
0\right\}  }\frac{1}{\prod\limits_{i\in M}\left\vert \xi_{i}\right\vert
_{K}^{\beta_{i}}}\left\{  \int\limits_{K^{\left\vert L\right\vert
}\smallsetminus\left\{  0\right\}  }\frac{\prod\limits_{i\in I}\left\vert
\xi_{i}\right\vert _{K}^{\alpha_{i}}}{\prod\limits_{i\in L}\left\vert \xi
_{i}\right\vert _{K}^{\beta_{i}}}\mathcal{F}_{x_{L}\rightarrow\xi_{L}}\left(
\boldsymbol{P}_{L^{\prime},\xi_{L^{\prime}}}g\right)  \left\vert d^{\left\vert
L\right\vert }\xi_{L}\right\vert _{K}\right\}  \left\vert d^{\left\vert
L^{\prime}\right\vert }\xi_{L^{\prime}}\right\vert _{K}%
\]
and thus
\begin{multline*}
\left\vert E_{\widehat{g}}^{\left(  L\right)  }\left(  \boldsymbol{\alpha}%
_{I},\boldsymbol{\beta}_{J}\right)  \right\vert \leq\\
\left\{  \int\limits_{R_{K}^{\left\vert L^{\prime}\right\vert }\smallsetminus
\left\{  0\right\}  }\frac{\left\vert d^{\left\vert L^{\prime}\right\vert }%
\xi_{L^{\prime}}\right\vert _{K}}{\prod\limits_{i\in M}\left\vert \xi
_{i}\right\vert _{K}^{\operatorname{Re}(\beta_{i})}}\right\}  \sup
_{\xi_{L^{\prime}}\in K^{\left\vert L^{\prime}\right\vert }}\int
\limits_{K^{\left\vert L\right\vert }}\left\vert \mathcal{F}_{x_{L}%
\rightarrow\xi_{L}}\left(  \boldsymbol{P}_{L^{\prime},\xi_{L^{\prime}}%
}g\right)  \right\vert \left\vert d^{\left\vert L\right\vert }\xi
_{L}\right\vert _{K}\\
=\left\{  \int\limits_{R_{K}^{\left\vert M\right\vert }\smallsetminus\left\{
0\right\}  }\frac{\left\vert d^{\left\vert M\right\vert }\xi_{M}\right\vert
_{K}}{\prod\limits_{i\in M}\left\vert \xi_{i}\right\vert _{K}%
^{\operatorname{Re}(\beta_{i})}}\right\}  \times\\
\sup_{\xi_{L^{\prime}}\in K^{\left\vert L^{\prime}\right\vert }}%
\int\limits_{K^{\left\vert L\right\vert }}\frac{1}{\left[  \xi_{_{L}}\right]
_{K}^{\frac{l}{2}}}\left[  \xi_{L}\right]  _{K}^{\frac{l}{2}}\left\vert
\mathcal{F}_{x_{L}\rightarrow\xi_{L}}\left(  \boldsymbol{P}_{L^{\prime}%
,\xi_{L^{\prime}}}g\right)  \right\vert \left\vert d^{\left\vert L\right\vert
}\xi_{L}\right\vert _{K}.
\end{multline*}
By taking $l>n$, $0<\operatorname{Re}(\beta_{i})<1$ for $i\in M$, and applying
Cauchy-Schwarz inequality and Lemma \ref{lemma4B_1},%
\begin{multline*}
\left\vert E_{\widehat{g}}^{\left(  L\right)  }\left(  \boldsymbol{\alpha}%
_{I},\boldsymbol{\beta}_{J}\right)  \right\vert \leq C\left(
\boldsymbol{\beta}_{J},l,n\right)  \times\\
\sup_{\xi_{L^{\prime}}\in K^{\left\vert L^{\prime}\right\vert }}\sqrt
{\int\limits_{K^{\left\vert L\right\vert }}\left[  \xi_{L}\right]  _{K}%
^{l}\left\vert \mathcal{F}_{x_{L}\rightarrow\xi_{L}}\left(  \boldsymbol{P}%
_{L^{\prime},\xi_{L^{\prime}}}g\right)  \right\vert ^{2}\left\vert
d^{\left\vert L\right\vert }\xi_{L}\right\vert _{K}}\leq C\left(
\boldsymbol{\beta}_{J},l,n\right)  \left\Vert g\right\Vert _{l}.
\end{multline*}

\textbf{Case 5. } $J\cap L\neq\emptyset$ , $I\cap L\neq\emptyset$, and
$M:=J\smallsetminus L$.

In this case, taking $\left\{  1,\ldots,n\right\}  =L%
{\textstyle\bigsqcup}
L^{\prime}$ and using (\ref{partition}), and proceeding like in Case 4,%
\begin{multline*}
\left\vert E_{\widehat{g}}^{\left(  L\right)  }\left(  \boldsymbol{\alpha}%
_{I},\boldsymbol{\beta}_{J}\right)  \right\vert \leq\left\{  \int
\limits_{R_{K}^{\left\vert M\right\vert }\smallsetminus\left\{  0\right\}
}\frac{\left\vert d^{\left\vert M\right\vert }\xi_{M}\right\vert _{K}}%
{\prod\limits_{i\in M}\left\vert \xi_{i}\right\vert _{K}^{\operatorname{Re}%
(\beta_{i})}}\right\}  \times\\
\sup_{\xi_{L^{\prime}}\in K^{\left\vert L^{\prime}\right\vert }}%
\int\limits_{K^{\left\vert L\right\vert }}\text{ }\prod\limits_{i\in I\cap
L}\left\vert \xi_{i}\right\vert _{K}^{\operatorname{Re}(\alpha_{i})}\left\vert
\mathcal{F}_{x_{L^{\prime}}\rightarrow\xi_{L^{\prime}}}\left(  \boldsymbol{P}%
_{L^{\prime},\xi_{L^{\prime}}}g\right)  \right\vert \left\vert d^{\left\vert
L\right\vert }\xi_{L}\right\vert _{K}\\
\leq C\left(  \boldsymbol{\beta}_{J},l,n\right)  \left\Vert g\right\Vert
_{l+2\sum_{i\in I\cap L}\left\lceil \operatorname{Re}(\alpha_{i})\right\rceil
}%
\end{multline*}
for $l>n$.
\end{proof}

\section{Local zeta functions in $\mathcal{H}_{\infty}$}

We fix a non-constant polynomial $\mathfrak{f}$ in $R_{K}\left[  \xi
_{1},\ldots,\xi_{n}\right]  $ of degree $d$. We set $\widetilde{\mathfrak{f}%
}\left(  \xi\right)  :=\mathfrak{f}\left(  -\xi\right)  $. Then $\widehat
{\left\vert \widetilde{\mathfrak{f}}\right\vert _{K}^{\overline{s}}}$,
$\operatorname{Re}(s)>0$, \ defines a distribution from $\mathcal{D}^{\prime}$
satisfying $\widehat{\widehat{\left\vert \widetilde{\mathfrak{f}}\right\vert
_{K}^{\overline{s}}}}=\left\vert \mathfrak{f}\right\vert _{K}^{\overline{s}}$
in $\mathcal{D}^{\prime}$ for $\operatorname{Re}(s)>0$. In addition,
$\widehat{\left\vert \widetilde{\mathfrak{f}}\right\vert _{K}^{\overline{s}}}%
$, with $\operatorname{Re}(s)>0$, gives rise to a holomorphic $\mathcal{H}%
_{\infty}^{\ast}$-valued function in $s$, cf. Lemma \ref{lemma4B}. In this
section we establish the existence of a meromorphic continuation of
$\mathcal{H}_{\infty}^{\ast}$-valued functions of type%
\begin{equation}
Z_{\widehat{g}}\left(  s,\mathfrak{f}\right)  :=\left[  \widehat{\left\vert
\widetilde{\mathfrak{f}}\right\vert _{K}^{\overline{s}}},g\right]
=\int\limits_{K^{n}\smallsetminus\widetilde{\mathfrak{f}}^{-1}\left(
0\right)  }\left\vert \mathfrak{f}\left(  \xi\right)  \right\vert _{K}%
^{s}\widehat{g}\left(  \xi\right)  \left\vert d^{n}\xi\right\vert _{K}\text{,
} \label{Zeta_function_1}%
\end{equation}
$\operatorname{Re}(s)>0$, $g\in\mathcal{H}_{\infty}\left(  K^{n}\right)  $, to
the whole complex plane. Since $\mathcal{D}\left(  K^{n}\right)
\subset\mathcal{H}_{\infty}\left(  K^{n}\right)  $, integrals of type
(\ref{Zeta_function_1}) are generalizations of the classical Igusa's local
zeta functions, see e.g. \cite{Igusa}, \cite{Igusa1}. Before further
discussion we present an example that illustrates the analogies an differences
between the classical Igusa's zeta functions and the local zeta functions on
$\mathcal{H}_{\infty}$.

\subsection{\label{Sect_Local_zeta_forms}Local zeta functions for strongly
non-degenerate forms modulo $\pi$}

\begin{notation}
We denote by `$\overline{\cdot}$', the reduction modulo $\pi$, i.e. the
canonical mapping $R_{K}^{n}\rightarrow\left(  R_{K}/\pi R_{K}\right)
^{n}=\mathbb{F}_{q}^{n}$. If $\mathfrak{f}$ is a polynomial with coefficients
in $R_{K}$, we denote by $\overline{\mathfrak{f}}$, the polynomial obtained by
reducing modulo $\pi$ the coefficients of $\mathfrak{f}$.
\end{notation}

We take $g\left(  x\right)  =\mathcal{F}_{\xi\rightarrow x}^{-1}\left(
e^{-\left\Vert \xi\right\Vert _{K}^{\alpha}}\right)  $, with $\alpha>0$. By
Lemma \ref{Lemma2A}-(iii), $g\in\mathcal{H}_{\infty}\left(  K^{n}\right)  $.
It is interesting to mention that function $\mathcal{F}_{\xi\rightarrow
x}^{-1}\left(  e^{-t\left\Vert \xi\right\Vert _{K}^{\alpha}}\right)  $ with
$t>0$, $\alpha>0$ is the `fundamental solution' of the heat equation over
$K^{n}$, see e.g. \cite[Section 2.2.7]{Zuniga-LNM-2016}.\ We also pick a
homogeneous polynomial $\mathfrak{f}$ with coefficients in $R_{K}%
\smallsetminus\pi R_{K}$ of degree $d$, satisfying $\overline{\mathfrak{f}%
}\left(  \overline{a}\right)  =\nabla\overline{\mathfrak{f}}\left(
\overline{a}\right)  =\overline{0}$ implies $\overline{a}=\overline{0}$. We
set
\[
\left[  \widehat{\left\vert \widetilde{\mathfrak{f}}\right\vert _{K}%
^{\overline{s}}},g\right]  =%
{\displaystyle\int\limits_{K^{n}\smallsetminus\mathfrak{f}^{-1}\left(
0\right)  }}
\left\vert \mathfrak{f}\left(  \xi\right)  \right\vert _{K}^{s}e^{-\left\Vert
\xi\right\Vert _{K}^{\alpha}}\left\vert d^{n}\xi\right\vert _{K}\text{ for
}\operatorname{Re}(s)>0.
\]

\textbf{Claim.} $\left[  \widehat{\left\vert \widetilde{\mathfrak{f}%
}\right\vert _{K}^{\overline{s}}},g\right]  $ admits meromorphic continuation
to the whole \ complex plane and the real parts of the possible poles belong
to the set $\left\{  -1\right\}  \cup\cup_{l\in\mathbb{N}}\left\{
\frac{-\left(  n+\alpha l\right)  }{d}\right\}  $.

To establish the Claim we proceed as follows. We use the partition
$K^{n}\smallsetminus\left\{  0\right\}  =%
{\textstyle\bigsqcup\nolimits_{j=-\infty}^{\infty}}
\pi^{j}S_{0}^{n}$\ to obtain%
\begin{gather}
\left[  \widehat{\left\vert \widetilde{\mathfrak{f}}\right\vert _{K}%
^{\overline{s}}},g\right]  =%
{\displaystyle\sum\limits_{j=-\infty}^{\infty}}
\text{ }%
{\displaystyle\int\limits_{\pi^{j}S_{0}^{n}}}
\left\vert \mathfrak{f}\left(  \xi\right)  \right\vert _{K}^{s}e^{-\left\Vert
\xi\right\Vert _{K}^{\alpha}}\left\vert d^{n}\xi\right\vert _{K}%
\label{Eq_Z_0}\\
=\left(
{\displaystyle\int\limits_{S_{0}^{n}}}
\left\vert \mathfrak{f}\left(  \xi\right)  \right\vert _{K}^{s}\left\vert
d^{n}\xi\right\vert _{K}\right)  \left(
{\displaystyle\sum\limits_{j=-\infty}^{\infty}}
q^{-jds-jn}e^{-q^{-j\alpha}}\right) \nonumber\\
=\left(
{\displaystyle\int\limits_{S_{0}^{n}}}
\left\vert \mathfrak{f}\left(  \xi\right)  \right\vert _{K}^{s}\left\vert
d^{n}\xi\right\vert _{K}\right)  \left(  \frac{1}{1-q^{-n}}%
{\displaystyle\int\limits_{K^{n}}}
\left\Vert \xi\right\Vert _{K}^{ds}e^{-\left\Vert \xi\right\Vert _{K}^{\alpha
}}\left\vert d^{n}\xi\right\vert _{K}\right)  =:Z_{0}(s)Z_{1}(s).\nonumber
\end{gather}
By using that
\[
Z(s,\mathfrak{f}):=%
{\displaystyle\int\limits_{R_{K}^{n}}}
\left\vert \mathfrak{f}\left(  \xi\right)  \right\vert _{K}^{s}\left\vert
d^{n}\xi\right\vert _{K}\text{ }\frac{L\left(  q^{-s}\right)  }{\left(
1-q^{-1-s}\right)  \left(  1-q^{-n-ds}\right)  }\text{ for }\operatorname{Re}%
(s)>0\text{,}%
\]
where $L\left(  q^{-s}\right)  $\ is a polynomial in $q^{-s}$, see e.g.
\cite[Proposition 10.2.1]{Igusa}, and the partition $R_{K}^{n}=\pi R_{K}^{n}%
{\textstyle\bigsqcup}
S_{0}^{n}$, we have $Z(s,\mathfrak{f})=q^{-n-ds}Z(s,\mathfrak{f})+Z_{0}(s)$,
and consequently
\begin{equation}
Z_{0}(s)=\frac{L\left(  q^{-s}\right)  }{\left(  1-q^{-1-s}\right)  }.
\label{Eq_Z_1}%
\end{equation}
Now%
\begin{align}
Z_{1}(s)  &  =\frac{1}{1-q^{-n}}%
{\displaystyle\int\limits_{R_{K}^{n}}}
\left\Vert \xi\right\Vert _{K}^{ds}e^{-\left\Vert \xi\right\Vert _{K}^{\alpha
}}\left\vert d^{n}\xi\right\vert _{K}+\frac{1}{1-q^{-n}}%
{\displaystyle\int\limits_{K^{n}\smallsetminus R_{K}^{n}}}
\left\Vert \xi\right\Vert _{K}^{ds}e^{-\left\Vert \xi\right\Vert _{K}^{\alpha
}}\left\vert d^{n}\xi\right\vert _{K}\label{Eq_Z_2}\\
&  =:Z_{1,1}(s)+Z_{1,2}(s).\nonumber
\end{align}
The integral $Z_{1,2}(s)$ is holomorphic in the whole complex plane. To study
$Z_{1,1}(s)$ we use $e^{-\left\Vert \xi\right\Vert _{K}^{\alpha}}=\sum
_{l=0}^{L}\frac{\left(  -1\right)  ^{l}}{l!}\left\Vert \xi\right\Vert
_{K}^{\alpha l}+f\left(  \left\Vert \xi\right\Vert _{K}\right)  $ as follows:%
\begin{align}
Z_{1,1}(s)  &  =\frac{1}{1-q^{-n}}\sum_{l=0}^{L}\frac{\left(  -1\right)  ^{l}%
}{l!}%
{\displaystyle\int\limits_{R_{K}^{n}}}
\left\Vert \xi\right\Vert _{K}^{ds+\alpha l}\left\vert d^{n}\xi\right\vert
_{K}+\frac{1}{1-q^{-n}}%
{\displaystyle\int\limits_{R_{K}^{n}}}
\left\Vert \xi\right\Vert _{K}^{ds}f\left(  \left\Vert \xi\right\Vert
_{K}\right)  \left\vert d^{n}\xi\right\vert _{K}\label{Eq_Z_3}\\
&  =\sum_{l=0}^{L}\frac{\left(  -1\right)  ^{l}}{l!}\frac{1}{1-q^{-ds-n-\alpha
l}}+\frac{1}{1-q^{-n}}%
{\displaystyle\int\limits_{R_{K}^{n}}}
\left\Vert \xi\right\Vert _{K}^{ds}f\left(  \left\Vert \xi\right\Vert
_{K}\right)  \left\vert d^{n}\xi\right\vert _{K}.\nonumber
\end{align}
The announced Claim follows from (\ref{Eq_Z_0})-(\ref{Eq_Z_3}). The local zeta
functions $\left[  \widehat{\left\vert \widetilde{\mathfrak{f}}\right\vert
_{K}^{\overline{s}}},g\right]  $ admit meromorphic continuations to the whole
complex plane but they are not rational functions of $q^{-s}$ and the real
parts of the possible poles are `real' negative numbers. In addition, if take
$\alpha=1$, then the real parts of the poles of the meromorphic continuations
resemble the poles of Archimedean zeta functions, see e.g. \cite[Theorem
5.4.1]{Igusa}.

\subsection{Multidimensional Vladimirov operators}

\subsubsection{Riesz kernels}

For $\alpha\in\mathbb{C}$, we set $\Gamma\left(  \alpha\right)  :=\frac
{1-q^{\alpha-1}}{1-q^{-\alpha}}$. The function%
\[
f_{\alpha}\left(  x\right)  =\frac{\left\vert x\right\vert _{K}^{\alpha-1}%
}{\Gamma\left(  \alpha\right)  }\text{ for }\alpha\neq\mu_{j}\text{, }%
\alpha\neq1+\mu_{j}\text{, }j\in\mathbb{Z}\text{, }x\in K\text{,}%
\]
where $\mu_{j}=\frac{2\pi\sqrt{-1}j}{\ln q}$, $j\in\mathbb{Z}$, gives rise to
a distribution from $\mathcal{D}^{\prime}(K)$\ called \textit{the
one-dimensional} \textit{ Riesz kernel}. This distribution has a meromorphic
extension to the whole complex plane $\alpha$, with poles at the points
$\alpha=\mu_{j}$, $1+\mu_{j}$, $j\in\mathbb{Z}$. The distribution
$f_{0}\left(  x\right)  $ is defined by taking%
\[
f_{0}\left(  x\right)  =\lim_{\alpha\rightarrow0}f_{\alpha}\left(  x\right)
=\delta\left(  x\right)  \text{, }x\in K\text{ (the Dirac distribution)}%
\]
where the limit is understood in the weak sense. Notice that $f_{0}\left(
x\right)  =\lim_{\alpha\rightarrow\mu_{j}}f_{\alpha}\left(  x\right)  $ for
$j\in\mathbb{Z}$. The definition of the distribution $f_{1}\left(  x\right)  $
requires to substitute the space of test functions by the Lizorkin space of
the first kind, see e.g. \cite[Section 9.2]{A-K-S}. We do not use this
approach in this article.

We recall that if $\alpha\in\mathbb{C}$, with $\operatorname{Re}(\alpha)\neq
1$, then
\begin{equation}%
{\displaystyle\int\nolimits_{K}}
f_{\alpha}\left(  \xi\right)  \widehat{\phi}\left(  \xi\right)  \left\vert
d\xi\right\vert _{K}=%
{\displaystyle\int\nolimits_{K}}
\left\vert x\right\vert _{K}^{-\alpha}\phi\left(  x\right)  \left\vert
dx\right\vert _{K}, \label{Eq18}%
\end{equation}
for $\phi\in\mathcal{D}(K)$ with the convention that $f_{0}\left(  \xi\right)
=\delta\left(  \xi\right)  $, see e.g. \cite[Theorem 4.5]{Taibleson}.

\begin{remark}
\label{Nota3}If $T\in\mathcal{D}^{\prime}(K^{n})$ and $G\in\mathcal{D}%
^{\prime}(K^{m})$, then its \textit{direct product} is the distribution
defined by the formula%
\[
\left(  T\left(  x\right)  \times G\left(  y\right)  ,\varphi\right)  =\left(
T\left(  x\right)  ,\left(  G\left(  y\right)  ,\varphi\left(  x,y\right)
\right)  \right)  \text{ for }\varphi\left(  x,y\right)  \in\mathcal{D}%
(K^{n+m}).
\]
By using that any test function $\varphi\left(  x,y\right)  $ in
$\mathcal{D}(K^{n+m})$ is a linear combination\ of test functions of the form
$\phi_{k}\left(  x\right)  \psi_{k}\left(  y\right)  $ with $\phi_{k}\left(
x\right)  \in\mathcal{D}(K^{n})$ and $\psi_{k}\left(  y\right)  \in
\mathcal{D}(K^{m})$, one verifies that the Fourier transform of $T\times G$ in
$\mathcal{D}^{\prime}(K^{n+m})$ is given by the formula%
\[
\left(  \widehat{T\times G},\varphi\right)  =\left(  \widehat{T}\times
\widehat{G},\varphi\right)  =\left(  T\times G,\widehat{\varphi}\right)
\text{ for }\varphi\in\mathcal{D}(K^{n+m}).
\]
For further details, the reader may consult \cite[Chap. 1, Sect. VI]{V-V-Z}.
\end{remark}

Let $\boldsymbol{\alpha}=\left(  \alpha_{1},\ldots,\alpha_{n}\right)
\in\mathbb{C}^{n}$ and $x=\left(  x_{1},\ldots,x_{n}\right)  \in K^{n}$. We
define $f_{\boldsymbol{\alpha}}\left(  x\right)  :=f_{\alpha_{1}}\left(
x_{_{1}}\right)  \times\cdots\times f_{\alpha_{n}}\left(  x_{n}\right)  \in$
$\mathcal{D}^{\prime}\left(  K^{n}\right)  $ as \textit{the multidimensional
Riesz kernel}, which is the direct product of the unidimensional Riesz kernels
$f_{\alpha_{j}}\left(  x_{j}\right)  $, $j=1,\ldots,n$. We will identify the
distribution $f_{\boldsymbol{\alpha}}\left(  x\right)  $ with the function $%
{\textstyle\prod\nolimits_{i=1}^{n}}
f_{\alpha_{i}}\left(  x_{_{i}}\right)  $. Thus, from Remark \ref{Nota3} and
(\ref{Eq18}), for $\operatorname{Re}(\alpha_{j})\neq1$ for $j\in\left\{
1,\ldots,n\right\}  $,
\begin{equation}
\mathcal{F}_{x\rightarrow\xi}\left(  f_{\boldsymbol{\alpha}}\left(  x\right)
\right)  =%
{\textstyle\prod\nolimits_{i=1}^{n}}
\left\vert \xi_{i}\right\vert _{K}^{-\alpha_{i}}\text{ in }\mathcal{D}%
^{\prime}\left(  K^{n}\right)  , \label{Eq4}%
\end{equation}
with the convention $f_{\boldsymbol{0}}\left(  x\right)  =\delta\left(
x\right)  \in\mathcal{D}^{\prime}\left(  K^{n}\right)  $.

\subsubsection{Vladimirov operators}

Let $\boldsymbol{\alpha}=\left(  \alpha_{1},\ldots,\alpha_{n}\right)
\in\mathbb{C}^{n}$, with $\operatorname{Re}(\alpha_{i})\geq0$ for all $i$.
\textit{The multidimensional Vladimirov\ operator} is defined as
\begin{equation}
\left(  \boldsymbol{D}^{\boldsymbol{\alpha}}\phi\right)  \left(  x\right)
=\mathcal{F}_{\xi\rightarrow x}^{-1}\left(
{\textstyle\prod\nolimits_{i=1}^{n}}
\left\vert \xi_{i}\right\vert _{K}^{\alpha_{i}}\text{ }\mathcal{F}%
_{x\rightarrow\xi}\phi\right)  \text{, }\boldsymbol{\alpha}\in\mathbb{C}%
^{n}\text{, }\phi\in\mathcal{D}\left(  K^{n}\right)  \text{.}
\label{Vladimirov_operator}%
\end{equation}
Notice that $\boldsymbol{D}^{\boldsymbol{\alpha}}:\mathcal{H}_{\infty}\left(
K^{n}\right)  \rightarrow\mathcal{H}_{\infty}\left(  K^{n}\right)  $ is
continuous operator, cf. Lemma \ref{lemma3A}. Let $\left[  \cdot,\cdot\right]
$ denote the pairing between $\mathcal{H}_{\infty}^{\ast}$\ and $\mathcal{H}%
_{\infty}$, see (\ref{pairing}). Then $\boldsymbol{D}^{\boldsymbol{\alpha}}$
has an adjoint operator $\boldsymbol{D}^{\boldsymbol{\alpha}\ast}$:
$\mathcal{H}_{\infty}^{\ast}\left(  K^{n}\right)  \rightarrow\mathcal{H}%
_{\infty}^{\ast}\left(  K^{n}\right)  $. Note that
\[
\left(  \boldsymbol{D}^{\boldsymbol{\alpha}}\phi\right)  \left(  x\right)
=f_{-\boldsymbol{\alpha}}\left(  x\right)  \ast\phi\left(  x\right)  =\left(
f_{\alpha_{1}}\left(  y_{1}\right)  \times\cdots\times f_{\alpha_{n}}\left(
y_{n}\right)  ,\phi\left(  x-y\right)  \right)  ,
\]
for $\phi\in\mathcal{D}\left(  K^{n}\right)  $, in the case $\operatorname{Re}%
(\alpha_{j})\in\left[  0,1\right)  \cup\left(  1,\infty\right)  $ for
$j=1,\ldots,n$.

\begin{proposition}
\label{Prop3}(i) Let $\boldsymbol{\alpha}=\left(  \alpha_{1},\ldots,\alpha
_{n}\right)  \in\mathbb{C}^{n}$, with $\operatorname{Re}(\alpha_{i})>0$ for
all $i$, $\boldsymbol{\beta}=\left(  \beta_{1},\ldots,\beta_{n}\right)
\in\mathbb{C}^{n}$, with $\operatorname{Re}(\beta_{i})>0$ for all $i$. The
following formula holds:
\[
\left[  \boldsymbol{D}^{\boldsymbol{\beta}\ast}\mathcal{F}\left\{
{\textstyle\prod\nolimits_{i=1}^{n}}
\left\vert x_{i}\right\vert _{K}^{\overline{\alpha}_{i}-1}\right\}  ,g\right]
=\left[  \mathcal{F}\left\{
{\textstyle\prod\nolimits_{i=1}^{n}}
\left\vert x_{i}\right\vert _{K}^{\overline{\alpha}_{i}+\overline{\beta}%
_{i}-1}\right\}  ,g\right]
\]
for $g\in\mathcal{H}_{\infty}\left(  K^{n}\right)  $ and $\mathcal{F}\left\{
{\textstyle\prod\nolimits_{i=1}^{n}}
\left\vert x_{i}\right\vert _{K}^{\overline{\alpha}_{i}-1}\right\}
\in\mathcal{H}_{\infty}^{\ast}\left(  K^{n}\right)  $. In particular, the
distribution $\mathcal{F}_{y\rightarrow x}\left[
{\textstyle\prod\nolimits_{i=1}^{n}}
\left\vert y_{i}\right\vert _{K}^{-\alpha_{i}}\right]  $ is a $\mathcal{H}%
_{\infty}^{\ast}$-valued function, which admits an analytic continuation to
$\mathbb{C}^{n}$.

\noindent(ii) Let $\boldsymbol{\alpha}=\left(  \alpha_{1},\ldots,\alpha
_{n}\right)  \in\mathbb{C}^{n}$, $\boldsymbol{\beta}=\left(  \beta_{1}%
,\ldots,\beta_{n}\right)  \in\mathbb{C}^{n}$, with $\operatorname{Re}%
(\beta_{i})>0$ for all $i$. The following formula holds:%
\[%
{\displaystyle\int\nolimits_{K^{n}}}
f_{\boldsymbol{\alpha}}\left(  \xi\right)  \widehat{g}\left(  \xi\right)
\left\vert d^{n}\xi\right\vert _{K}=%
{\displaystyle\int\nolimits_{K}}
{\textstyle\prod\nolimits_{i=1}^{n}}
\left\vert x_{i}\right\vert _{K}^{-\alpha_{i}}g\left(  x\right)  \left\vert
d^{n}x\right\vert _{K},
\]
for $g\in\mathcal{H}_{\infty}\left(  K^{n}\right)  $. In particular, the
distribution $\mathcal{F}_{x\rightarrow\xi}\left[  f_{\boldsymbol{\alpha}%
}\left(  x\right)  \right]  $ is a $\mathcal{H}_{\infty}^{\ast}$-valued
function, which admits an analytic continuation to $\mathbb{C}^{n}$.
\end{proposition}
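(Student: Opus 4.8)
The plan is to reduce both formulas to the one-dimensional Riesz-kernel identity \eqref{Eq18}, transported to $K^{n}$ via the direct-product relation of Remark \ref{Nota3} (equivalently, via \eqref{Eq4}), and then to pass from $\mathcal{D}(K^{n})$ to $\mathcal{H}_{\infty}(K^{n})$ and continue in the parameters using the $\mathcal{H}_{\infty}^{\ast}$-valued holomorphy supplied by Lemmas \ref{lemma4B} and \ref{lemma4C}. The common technical core, which I would isolate first, is: for $h\in\mathcal{H}_{\infty}(K^{n})$ the functional $h\mapsto\int_{K^{n}\smallsetminus\{0\}}\prod_{i=1}^{n}|\xi_{i}|_{K}^{\gamma_{i}-1}\widehat{h}(\xi)|d^{n}\xi|_{K}$ is a well-defined element of $\mathcal{H}_{\infty}^{\ast}(K^{n})$, holomorphic in $\boldsymbol{\gamma}$ wherever $\operatorname{Re}(\gamma_{i})>0$ for all $i$. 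Indeed, for each index place the factor $|\xi_{i}|_{K}^{\gamma_{i}-1}$ in the ``numerator'' when $\operatorname{Re}(\gamma_{i})>1$ and in the ``denominator'', written $|\xi_{i}|_{K}^{-(1-\gamma_{i})}$, when $0<\operatorname{Re}(\gamma_{i})<1$; the integral then becomes one of the functionals $E_{\widehat{h}}(\boldsymbol{\alpha}_{I},\boldsymbol{\beta}_{J})$ of Lemma \ref{lemma4C} (whose proof, based on splitting $K^{n}$ by which coordinates have absolute value $>1$ and then applying Fubini and Cauchy-Schwarz, also covers the extreme cases $I=\emptyset$ and $J=\emptyset$, the latter being Lemma \ref{lemma4B}), and holomorphy on the whole region follows by the classical criterion \cite[Lemma 5.3.1]{Igusa} off the hyperplanes $\operatorname{Re}(\gamma_{i})=1$ together with continuity across them.

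For part (i): using \eqref{Eq4}, the relation $\mathcal{F}\mathcal{F}T=T(-\,\cdot\,)$ in $\mathcal{D}^{\prime}$, and the complex conjugate built into the pairing \eqref{pairing} (which turns $\overline{\gamma}_{i}$ back into $\gamma_{i}$), one obtains, for $\operatorname{Re}(\gamma_{i})>0$ and $h\in\mathcal{H}_{\infty}(K^{n})$,
\[
\bigl[\mathcal{F}\{\textstyle\prod_{i=1}^{n}|x_{i}|_{K}^{\overline{\gamma}_{i}-1}\},h\bigr]=\int_{K^{n}\smallsetminus\{0\}}\prod_{i=1}^{n}|\xi_{i}|_{K}^{\gamma_{i}-1}\,\widehat{h}(\xi)\,|d^{n}\xi|_{K};
\]
in particular $\mathcal{F}\{\prod_{i}|x_{i}|_{K}^{\overline{\gamma}_{i}-1}\}\in\mathcal{H}_{\infty}^{\ast}(K^{n})$ by the first paragraph. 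Applying this with $\boldsymbol{\gamma}=\boldsymbol{\alpha}$ and $h=\boldsymbol{D}^{\boldsymbol{\beta}}g$, invoking the defining relation of the adjoint $\boldsymbol{D}^{\boldsymbol{\beta}\ast}$ and $\widehat{\boldsymbol{D}^{\boldsymbol{\beta}}g}=\prod_{i}|\xi_{i}|_{K}^{\beta_{i}}\widehat{g}$, the left side of (i) equals $\int\prod_{i}|\xi_{i}|_{K}^{\alpha_{i}+\beta_{i}-1}\widehat{g}(\xi)|d^{n}\xi|_{K}$, which by the displayed identity with $\boldsymbol{\gamma}=\boldsymbol{\alpha}+\boldsymbol{\beta}$ is precisely $[\mathcal{F}\{\prod_{i}|x_{i}|_{K}^{\overline{\alpha}_{i}+\overline{\beta}_{i}-1}\},g]$, the right side. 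The final assertion of (i) follows because this formula shifts the exponent vector by an arbitrary $\boldsymbol{\beta}$ with $\operatorname{Re}(\beta_{i})>0$: iterating the $\boldsymbol{D}^{\boldsymbol{\beta}\ast}$-shift starting from the region of the first paragraph exhausts $\mathbb{C}^{n}$, each translate being holomorphic there by the first paragraph.

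For part (ii): first, for $\phi\in\mathcal{D}(K^{n})$, since $\operatorname{Re}(\alpha_{i})>0$ makes $f_{\boldsymbol{\alpha}}$ a locally integrable function, \eqref{Eq4} and the definition of the distributional Fourier transform give $\int_{K^{n}}f_{\boldsymbol{\alpha}}(\xi)\widehat{\phi}(\xi)|d^{n}\xi|_{K}=(\widehat{f_{\boldsymbol{\alpha}}},\phi)=\int_{K^{n}}\prod_{i}|x_{i}|_{K}^{-\alpha_{i}}\phi(x)|d^{n}x|_{K}$ for $\operatorname{Re}(\alpha_{i})\ne 1$ (convention $f_{0}=\delta$). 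To reach $g\in\mathcal{H}_{\infty}(K^{n})$, take $\phi_{k}\in\mathcal{D}(K^{n})$ with $\phi_{k}\to g$ in every $\|\cdot\|_{l}$; since $f_{\boldsymbol{\alpha}}(\xi)=\bigl(\prod_{i}\Gamma(\alpha_{i})\bigr)^{-1}\prod_{i}|\xi_{i}|_{K}^{\alpha_{i}-1}$, the left side converges by the first paragraph, while on the strip $0<\operatorname{Re}(\alpha_{i})<1$ the right side converges because there $\prod_{i}|x_{i}|_{K}^{-\alpha_{i}}$ is locally integrable and bounded at infinity and $g\in L^{1}(K^{n})\cap C_{0}(K^{n})$ by Lemma \ref{Lemma2A}. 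This gives the formula on that strip; analytic continuation (both sides being $\mathcal{H}_{\infty}^{\ast}$-valued holomorphic) extends it, and the last assertion is immediate since $[\mathcal{F}_{x\to\xi}f_{\boldsymbol{\alpha}},g]$ is, up to the factor $\prod_{i}\overline{\Gamma(\alpha_{i})}^{-1}$, one of the functionals already shown to be holomorphic on $\{\operatorname{Re}(\alpha_{i})>0\}$, its continuation to $\mathbb{C}^{n}$ again coming from the $\boldsymbol{D}^{\boldsymbol{\beta}\ast}$-shift of part (i).

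The step I expect to be the main obstacle is precisely the claim that $\mathcal{F}\{\prod_{i}|x_{i}|_{K}^{\overline{\alpha}_{i}-1}\}$ (equivalently $f_{\boldsymbol{\alpha}}$) defines an element of $\mathcal{H}_{\infty}^{\ast}$ on the whole region $\{\operatorname{Re}(\alpha_{i})>0\}$, and not merely where a crude $\|\cdot\|_{-m}$ bound applies: this forces one to use the refined estimates of Lemmas \ref{lemma4B_1} and \ref{lemma4C}, which dissect $K^{n}$ according to which coordinates exceed $1$ in absolute value, control the ``small'' coordinates by the singular weight and the ``large'' ones by a Cauchy-Schwarz pairing against some $\|g\|_{l}$, and then to glue the resulting holomorphic pieces across the hyperplanes $\operatorname{Re}(\alpha_{i})=1$ by a Morera/identity-theorem argument for vector-valued holomorphic functions. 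Everything else --- the one-variable Riesz identity \eqref{Eq18}, the direct-product formula of Remark \ref{Nota3}, and the adjoint relation --- is routine.
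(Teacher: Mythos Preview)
Your proof follows the paper's own argument essentially step for step: both parts rest on Lemma~\ref{lemma4C} for the $\mathcal{H}_\infty^\ast$-valued holomorphy of $g\mapsto\int\prod_i|\xi_i|_K^{\gamma_i-1}\widehat g\,|d^n\xi|_K$, the adjoint relation $[\boldsymbol D^{\boldsymbol\beta\ast}T,g]=[T,\boldsymbol D^{\boldsymbol\beta}g]$, the one-dimensional Riesz identity~\eqref{Eq18} lifted to $K^n$ via Remark~\ref{Nota3}/\eqref{Eq4}, density of $\mathcal{D}$ in $\mathcal{H}_\infty$, and the shift by an arbitrary $\boldsymbol\beta$ with $\operatorname{Re}(\beta_i)>0$ to cover all of $\mathbb{C}^n$. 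The one place you diverge is in justifying convergence of the right-hand side of (ii) on the strip: you argue directly from $g\in L^1\cap C_0$ and ``$\prod_i|x_i|_K^{-\alpha_i}$ locally integrable and bounded at infinity'', but this weight is \emph{not} bounded on the mixed regions where some $|x_i|_K\le 1$ while $\|x\|_K$ is large, so the claim as written does not close; the paper instead handles this point by ``switching $\widehat\phi$ and $\phi$'' and invoking Lemma~\ref{lemma4C} a second time, which is the cleaner route and exactly the refined decomposition you yourself flag in your final paragraph.
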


\begin{remark}
The formula given in the second part of Proposition \ref{Prop3} shows that
$\mathcal{F}_{x\rightarrow\xi}\left[  f_{\boldsymbol{1}}\left(  x\right)
\right]  $, with $\boldsymbol{1}=(1,\ldots,1)\in\mathbb{C}^{n}$, is a
well-defined $\mathcal{H}_{\infty}^{\ast}$-valued function.
\end{remark}

\begin{proof}
(i) By the existence of $\boldsymbol{D}^{\beta\ast}$, we have
\[
\left[  \boldsymbol{D}^{\beta\ast}\mathcal{F}\left\{
{\textstyle\prod\nolimits_{i=1}^{n}}
\left\vert x_{i}\right\vert _{K}^{\overline{\alpha}_{i}-1}\right\}  ,g\right]
=\left[  \mathcal{F}\left\{
{\textstyle\prod\nolimits_{i=1}^{n}}
\left\vert x_{i}\right\vert _{K}^{\overline{\alpha}_{i}-1}\right\}
,\boldsymbol{D}^{\beta}g\right]  .
\]
Now%
\begin{gather*}
\left[  \mathcal{F}\left\{
{\textstyle\prod\nolimits_{i=1}^{n}}
\left\vert x_{i}\right\vert _{K}^{\overline{\alpha}_{i}-1}\right\}
,\boldsymbol{D}^{\boldsymbol{\beta}}g\right]  =%
{\displaystyle\int\nolimits_{K^{n}}}
{\textstyle\prod\nolimits_{i=1}^{n}}
\left\vert x_{i}\right\vert _{K}^{\alpha_{i}-1}\text{ }\widehat{\boldsymbol{D}%
^{\boldsymbol{\beta}}g}\left(  x\right)  \left\vert d^{n}x\right\vert _{K}\\
=%
{\displaystyle\int\nolimits_{K^{n}}}
{\textstyle\prod\nolimits_{i=1}^{n}}
\left\vert x_{i}\right\vert _{K}^{\alpha_{i}+\beta_{i}-1}\widehat{g}\left(
\xi\right)  \left\vert d^{n}\xi\right\vert _{K}=\left[  \mathcal{F}\left\{
{\textstyle\prod\nolimits_{i=1}^{n}}
\left\vert x_{i}\right\vert _{K}^{\overline{\alpha}_{i}+\overline{\beta}%
_{i}-1}\right\}  ,g\right]  .
\end{gather*}

Now, since $%
{\textstyle\prod\nolimits_{i=1}^{n}}
\left\vert x_{i}\right\vert _{K}^{\overline{\alpha}_{i}-1}$, and then
$\mathcal{F}\left\{
{\textstyle\prod\nolimits_{i=1}^{n}}
\left\vert x_{i}\right\vert _{K}^{\overline{\alpha}_{i}-1}\right\}  $, is a
$\mathcal{H}_{\infty}^{\ast}$-valued function, which is holomorphic in
$\boldsymbol{\alpha}\in\mathbb{C}^{n}$ in $\operatorname{Re}\left(  \alpha
_{i}\right)  >0$ for all \ $i$, and $\mathcal{F}\left\{
{\textstyle\prod\nolimits_{i=1}^{n}}
\left\vert x_{i}\right\vert _{K}^{\overline{\alpha}_{i}+\overline{\beta}%
_{i}-1}\right\}  $ is a $\mathcal{H}_{\infty}^{\ast}$-valued function, which
is holomorphic in $\boldsymbol{\alpha}\in\mathbb{C}^{n}$ in $\operatorname{Re}%
\left(  \alpha_{i}\right)  >-\operatorname{Re}\left(  \beta_{i}\right)  $,
$i=1,\ldots,n$, cf. Lemma \ref{lemma4C}, we conclude that $\mathcal{F}\left\{
%
{\textstyle\prod\nolimits_{i=1}^{n}}
\left\vert x_{i}\right\vert _{K}^{\overline{\alpha}_{i}-1}\right\}  $ has an
analytic extension to the half-plane $\operatorname{Re}\left(  \alpha
_{i}\right)  >-\operatorname{Re}\left(  \beta_{i}\right)  $, $i=1,\ldots,n$.
By using the fact that $\boldsymbol{\beta}$ is arbitrary, the principle of
analytic continuation assures the existence of an analytic continuation of
$\mathcal{F}\left\{
{\textstyle\prod\nolimits_{i=1}^{n}}
\left\vert x_{i}\right\vert _{K}^{\overline{\alpha}_{i}-1}\right\}  $ to the
whole $\mathbb{C}^{n}$.

(ii) By (\ref{Eq4}), we have
\begin{equation}%
{\displaystyle\int\nolimits_{K^{n}}}
f_{\boldsymbol{\alpha}}\left(  \xi\right)  \widehat{\phi}\left(  \xi\right)
\left\vert d^{n}\xi\right\vert _{K}=%
{\displaystyle\int\nolimits_{K^{n}}}
{\textstyle\prod\nolimits_{i=1}^{n}}
\left\vert x_{i}\right\vert _{K}^{-\alpha_{i}}\phi\left(  x\right)  \left\vert
d^{n}x\right\vert _{K}, \label{Eq5}%
\end{equation}
for $\phi\in D\left(  K^{n}\right)  $ and $\operatorname{Re}(\alpha_{i})\neq1$
for $i=1,\ldots,n$. By Lemma \ref{lemma4C}, the left-hand side of (\ref{Eq5})
defines a $\mathcal{H}_{\infty}^{\ast}$-valued function, which is holomorphic
in \ $\boldsymbol{\alpha}\in\mathbb{C}^{n}$ in the half-plane
\[
\operatorname{Re}(\alpha_{i})>0\text{ for }i=1,\ldots,n.
\]
By switching $\widehat{\phi}$ and $\phi$ in (\ref{Eq5}) and applying Lemma
\ref{lemma4C}, we obtain that the functional in the right-hand side of
(\ref{Eq5}) defines a $\mathcal{H}_{\infty}^{\ast}$-valued function, which is
holomorphic in \ $\boldsymbol{\alpha}\in\mathbb{C}^{n}$ in the half-plane%
\[
\operatorname{Re}(\alpha_{i})<1\text{ for }i=1,\ldots,n.
\]
Then both functionals appearing in (\ref{Eq5}) are holomorphic in the open
subset%
\begin{equation}
\operatorname{Re}(\alpha_{i})\in\left(  0,1\right)  \text{ for }i=1,\ldots,n.
\label{Eq6}%
\end{equation}
Now, by the principle of analytic continuation, \ and the fact that
$\mathcal{D}$ is dense in $\mathcal{H}_{\infty}$, cf. \cite[Lemma
3.4.]{Zuniga2016}, formula (\ref{Eq5}) is valid when $f_{\boldsymbol{\alpha}%
}\left(  \xi\right)  $\ and $%
{\textstyle\prod\nolimits_{i=1}^{n}}
\left\vert x_{i}\right\vert _{K}^{-\alpha_{i}}$ are considered $\mathcal{H}%
_{\infty}^{\ast}$-valued functions of $\boldsymbol{\alpha}$, with
$\operatorname{Re}(\alpha_{i})\neq1$ for all $i$. Notice that (\ref{Eq5}) can
be re-written as%
\[
\left[  \mathcal{F}_{x\rightarrow\xi}\left\{  f_{\overline{\boldsymbol{\alpha
}}}\left(  x\right)  \right\}  ,g\right]  =\left[  \mathcal{F}_{y\rightarrow
x}\left\{
{\textstyle\prod\nolimits_{i=1}^{n}}
\left\vert y_{i}\right\vert _{K}^{\overline{\alpha}_{i}}\right\}  ,\widehat
{g}\right]  ,\text{ }g\in\mathcal{H}_{\infty}^{\ast},
\]
where the Fourier transforms are understood in $\mathcal{D}^{\prime}$.
Finally, since $\mathcal{F}_{y\rightarrow x}\left\{
{\textstyle\prod\nolimits_{i=1}^{n}}
\left\vert y_{i}\right\vert _{K}^{\overline{\alpha}_{i}}\right\}  $ admits an
analytic continuation to $\mathbb{C}^{n}$, we conclude that $\mathcal{F}%
_{x\rightarrow\xi}\left\{  f_{\overline{\boldsymbol{\alpha}}}\left(  x\right)
\right\}  $ also admits an analytic continuation to $\mathbb{C}^{n}$.
\end{proof}

\begin{remark}
It is important to mention that the verification that the functionals
appearing in both sides of the formula given in Proposition \ref{Prop3}-(ii)
have a common domain of regularity is an essential matter. There are several
examples of functional equations for local zeta functions where the domain of
regularity is `the empty set'. For an in-depth discussion of this phenomenon
the reader may consult \cite[pp. 551-552]{Rubin} and the references therein.
\end{remark}

\subsection{\label{Sect_elelmetary_integrals}Meromorphic continuation of
elementary integrals in $\mathcal{H}_{\infty}$}

We fix $\boldsymbol{N}=\left(  N_{1},\ldots,N_{n}\right)  \in\left(
\mathbb{N\smallsetminus}\left\{  0\right\}  \right)  ^{n}$, $\boldsymbol{v}%
=\left(  v_{1},\ldots,v_{n}\right)  \in\left(  \mathbb{N\smallsetminus
}\left\{  0,1\right\}  \right)  ^{n}$. The \textit{elementary integral}
\textit{attached to} $\left(  \boldsymbol{N},\boldsymbol{v}\right)  $
\textit{and} $g\in\mathcal{H}_{\infty}\left(  K^{n}\right)  $ is defined as%
\[
E_{\widehat{g}}\left(  s;\boldsymbol{N},\boldsymbol{v}\right)  =\int
\nolimits_{K^{n}}\prod\nolimits_{i=1}^{r}\left\vert \xi_{i}\right\vert
_{K}^{N_{i}s+v_{i}-1}\widehat{g}\left(  \xi\right)  \left\vert d^{n}%
\xi\right\vert _{K},
\]
where $1\leq r\leq n$. By Lemma \ref{lemma4C}, $E_{\widehat{g}}\left(
s;\boldsymbol{N},\boldsymbol{v}\right)  $ defines $\mathcal{H}_{\infty}^{\ast
}$-valued holomorphic\ function of $s$ in the half-plane $\operatorname{Re}%
(s)>\max_{1\leq i\leq n}\frac{-v_{i}}{N_{i}}$.

\begin{definition}
\label{Def_sequence}Let $\left\{  \gamma_{i}\right\}  _{i\in
\mathbb{N\smallsetminus}\left\{  0\right\}  }$ be a sequence of positive real
numbers such that $\gamma_{1}\geq1$.\ The generalized \ arithmetic progression
generated by $\left\{  \gamma_{i}\right\}  _{i\in\mathbb{N}}$ is the sequence
$M=\left\{  m_{i}\right\}  _{i\in\mathbb{N}}$ of real numbers defined as: (1)
$m_{0}=0$ and $m_{1}=\gamma_{1}-1$; (2) $m_{l}=\sum_{j=1}^{l}\gamma_{j}$ for
$l\geq2$.
\end{definition}

\begin{proposition}
\label{Prop4}Let $M_{j}=\left\{  m_{i}^{(j)}\right\}  _{i\in\mathbb{N}}$, for
$j=1,\ldots,n$, be given generalized \ arithmetic progressions. Then
$E_{\widehat{g}}\left(  s;\boldsymbol{N},\boldsymbol{v}\right)  $ has an
analytic continuation the whole complex plane as a $\mathcal{H}_{\infty}%
^{\ast}\left(  K^{n}\right)  $-valued \ meromorphic function of $s$, denoted
again as $E_{\widehat{g}}\left(  s;\boldsymbol{N},\boldsymbol{v}\right)  $,
and the real parts of the possible poles of $E_{\widehat{g}}\left(
s;\boldsymbol{N},\boldsymbol{v}\right)  $ belong to the set $\cup_{i=1}%
^{r}\frac{-\left(  v_{i}+M_{i}\right)  }{N_{i}}$. In particular the real parts
of the possible poles are negative real numbers.
\end{proposition}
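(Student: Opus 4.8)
The plan is to mimic, at the level of Fourier transforms, the classical one--variable reduction underlying the meromorphic continuation of Igusa's local zeta functions (cf. \cite{Igusa}), replacing the Taylor expansion of a compactly supported locally constant function by a \emph{graded} expansion of $\widehat{g}$ near the coordinate hyperplanes $\xi_{i}=0$, $i=1,\ldots,r$, the admissible ``orders'' being prescribed by the progressions $M_{i}$ of Definition \ref{Def_sequence}. Since the $M_{i}$ carry no rationality constraint, this is exactly what allows the poles to have irrational real parts and prevents the continuation from being rational in $q^{-s}$.

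First I would localize the possible singularities and cut down the number of active variables. As in the proof of Lemma \ref{lemma4C}, write $K^{n}=\bigsqcup_{L\subseteq\{1,\ldots,n\}}A_{L}$ with $A_{L}=\{\xi:|\xi_{i}|_{K}>1\Leftrightarrow i\in L\}$ and split $E_{\widehat{g}}(s;\boldsymbol{N},\boldsymbol{v})=\sum_{L}E_{\widehat{g}}^{(L)}(s)$ accordingly. On $A_{L}$ the coordinates with $i\in L$ satisfy $|\xi_{i}|_{K}\geq q$, so the corresponding factors $|\xi_{i}|_{K}^{N_{i}s+v_{i}-1}$ are dominated by a fixed power of $[\xi]_{K}$ on compact $s$--sets and are absorbed, together with a bound $\leq C(s)\Vert g\Vert_{l}$, by the super--polynomial decay of $\widehat{g}$ (the estimate in the proof of Lemma \ref{Lemma2A}(i) shows $[\xi]_{K}^{m}\widehat{g}\in L^{1}$ for every $m$). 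In particular $E_{\widehat{g}}^{(L)}(s)$ is entire whenever $\{1,\ldots,r\}\subseteq L$, and for the remaining $L$ only the active coordinates $i\in\{1,\ldots,r\}\smallsetminus L$, which range over $R_{K}$, can create poles. Applying Fubini and the partial integrations $\boldsymbol{P}_{J,\xi_{J}^{(0)}}$ of Lemma \ref{lemma4B_1} --- which identify the slices $\widehat{g}(\xi_{I},\xi_{J}^{(0)})$ with Fourier transforms of elements of $\mathcal{H}_{\infty}(K^{|I|})$ satisfying $\Vert\boldsymbol{P}_{J,\xi_{J}^{(0)}}g\Vert_{l}\leq\Vert g\Vert_{l}$ --- reduces each $E_{\widehat{g}}^{(L)}(s)$, with uniform $\mathcal{H}_{\infty}^{\ast}$--control, to integrals of the shape $\int_{R_{K}^{a}}\prod_{i=1}^{a}|\xi_{i}|_{K}^{N_{i}s+v_{i}-1}\widehat{h}(\xi)\,|d^{a}\xi|_{K}$ with $h\in\mathcal{H}_{\infty}(K^{a})$.

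The analytic core then treats this reduced integral. Using $R_{K}=\bigsqcup_{j\geq0}\pi^{j}R_{K}^{\times}$ in each of the $a$ variables, write it as the multiple series $\sum_{\boldsymbol{j}\in\mathbb{N}^{a}}\big(\prod_{i}q^{-j_{i}(N_{i}s+v_{i}-1)}\big)c_{\boldsymbol{j}}(\widehat{h})$, $c_{\boldsymbol{j}}(\widehat{h})=\int_{\prod_{i}\pi^{j_{i}}R_{K}^{\times}}\widehat{h}\,|d^{a}\xi|_{K}$. For each level $L$ I would expand $\widehat{h}$ near $\xi=0$ in the graded form $\widehat{h}=\sum(\text{finitely many terms }c\prod_{i}|\xi_{i}|_{K}^{m_{l_{i}}^{(i)}}\cdot(\text{locally constant}))+R_{L}$, with exponents $m_{l_{i}}^{(i)}$ drawn from $M_{i}$ and $R_{L}$ the part of $\widehat{h}$ ``of local degree $>m_{L}^{(i)}$'' along each $\xi_{i}$. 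The leading terms integrate, via the Riesz--kernel computation behind (\ref{Eq18}), to products of geometric series $\sum_{j_{i}}q^{-j_{i}(N_{i}s+v_{i}+m_{l_{i}}^{(i)})}=(1-q^{-(N_{i}s+v_{i}+m_{l_{i}}^{(i)})})^{-1}$, hence to functions whose poles have real parts in $\bigcup_{i}\frac{-(v_{i}+M_{i})}{N_{i}}$; the remainder $\int\prod_{i}|\xi_{i}|_{K}^{N_{i}s+v_{i}-1}R_{L}(\xi)\,|d^{a}\xi|_{K}$ is, by the Cauchy--Schwarz argument of Lemmas \ref{Lemma2A}(i) and \ref{lemma4C}, holomorphic on $\operatorname{Re}(s)>\max_{i}\frac{-(v_{i}+m_{L+1}^{(i)})}{N_{i}}$ with a bound $\leq C(s,L)\Vert h\Vert_{l(L)}$. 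Letting $L\to\infty$ (so that $m_{L}^{(i)}\to+\infty$) yields the meromorphic continuation to all of $\mathbb{C}$; holomorphy in $s$ on each strip follows from the dominated--convergence criterion of \cite[Lemma 5.3.1]{Igusa}, and since every estimate above has the form $|\cdot|\leq C(s)\Vert g\Vert_{l}$ uniformly for $s$ in compacta avoiding the poles, the limiting object is a genuine $\mathcal{H}_{\infty}^{\ast}$--valued meromorphic function. Negativity of the real parts of the poles is immediate from $v_{i}\geq2$, $N_{i}\geq1$ and $m_{l}^{(i)}\geq0$.

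I expect the main obstacle to be precisely the graded expansion of $\widehat{g}$ and the estimation of the remainder $R_{L}$. Unlike in the classical, characteristic--zero setting --- where one expands a compactly supported locally constant function and gets finitely many terms and poles lying on rational arithmetic progressions --- there is no simple description of the functions in $\mathcal{H}_{\infty}$ near a coordinate hyperplane, so the expansion must be produced by hand; one has to establish that $\widehat{g}$ admits such an expansion with ``local degrees'' controlled by $M_{i}$ and that each remainder $R_{L}$ genuinely gains an order $>m_{L}^{(i)}$ in the relevant coordinate. This is the step at which the continuation ceases to be rational in $q^{-s}$ and the irrational poles appear.
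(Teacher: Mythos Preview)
Your localization step (the decomposition $K^{n}=\bigsqcup_{L}A_{L}$ and the reduction to integrals over $R_{K}^{a}$ via Lemma~\ref{lemma4B_1}) is reasonable, and close in spirit to the paper's reduction to $r=n$. The gap is the ``analytic core'': the graded expansion
\[
\widehat{h}(\xi)=\sum (\text{finite})\, c\,\prod_{i}|\xi_{i}|_{K}^{m_{l_{i}}^{(i)}}\cdot(\text{locally constant})\;+\;R_{L}
\]
simply does not exist for a general $h\in\mathcal{H}_{\infty}$, and in particular not with exponents drawn from \emph{prescribed} progressions $M_{i}$. Already for $\widehat{h}(\xi)=e^{-\|\xi\|_{K}^{\alpha}}$ the only available local expansion is $\sum_{l}(-1)^{l}\|\xi\|_{K}^{\alpha l}/l!$, so the exponents are determined by $\alpha$ and by $h$, not by you; for an arbitrary $\widehat{h}\in C_{0}(K^{n})$ there is no reason for any asymptotic expansion in powers of $|\xi_{i}|_{K}$ to hold at all. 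You identified this yourself as ``the main obstacle'', but it is not an obstacle to be overcome---it is the reason this route does not go through. Without the expansion you have no mechanism to peel off poles and no control on the remainder $R_{L}$.

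The paper bypasses this entirely. It never expands $\widehat{g}$; instead it uses the Vladimirov operator $\boldsymbol{D}^{\boldsymbol{\gamma}}$ (Lemma~\ref{lemma3A}) together with the Riesz--kernel Fourier identity (Proposition~\ref{Prop3}) to produce a \emph{functional equation}: applying $\boldsymbol{D}^{\boldsymbol{\gamma}_{1}\ast}$ to the distribution $\mathcal{F}\big[\prod_{i}|\xi_{i}|_{K}^{N_{i}\overline{s}+v_{i}-1}\big]$ shifts each exponent by $\gamma_{1}^{(i)}$, and rewriting both sides through the Riesz kernel $f_{\alpha}$ introduces explicit factors $\Gamma(N_{i}s+v_{i})/\Gamma(N_{i}s+v_{i}+\gamma_{1}^{(i)})$. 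These $\Gamma$-quotients (rational in $q^{-s}$) carry the poles, while the shifted integral is holomorphic on a strictly larger half-plane by Lemma~\ref{lemma4C}. Iterating with arbitrary $\boldsymbol{\gamma}_{2},\boldsymbol{\gamma}_{3},\ldots$ pushes the region of holomorphy to all of $\mathbb{C}$ and yields the pole set $\bigcup_{i}\frac{-(v_{i}+M_{i})}{N_{i}}$; the freedom in choosing the $\boldsymbol{\gamma}_{m}$ is exactly why the $M_{i}$ in the statement are \emph{given} rather than determined by $g$, cf.\ the remark following the proposition. The point is that the shift $\widehat{g}\mapsto\widehat{\boldsymbol{D}^{\boldsymbol{\gamma}}g}=\prod_{i}|\xi_{i}|_{K}^{\gamma_{i}}\widehat{g}$ is available for \emph{every} $g\in\mathcal{H}_{\infty}$, with no structural hypothesis on $\widehat{g}$---this replaces the Taylor expansion you were looking for.
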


\begin{proof}
First, without loss of generality, we may assume that $r=n$. Indeed, by using
the fact that $\left\vert \widehat{g}\left(  \xi\right)  \right\vert
\prod\nolimits_{i=1}^{r}\left\vert \xi_{i}\right\vert _{K}^{N_{i}%
\operatorname{Re}\left(  s_{i}\right)  +v_{i}-1}\in L^{1}\left(
K^{n},\left\vert d^{n}\xi\right\vert _{K}\right)  $ for $\operatorname{Re}%
(s)>\max_{1\leq i\leq r}\frac{-v_{i}}{N_{i}}$, and applying Fubini's theorem,
we have%
\[
E_{\widehat{g}}\left(  s;\boldsymbol{N},\boldsymbol{v}\right)  =\int
\nolimits_{K^{n}}\prod\nolimits_{i=1}^{r}\left\vert \xi_{i}\right\vert
_{K}^{Ns_{i}+v_{i}-1}\mathcal{F}\left(  \boldsymbol{P}_{_{\left(  \xi
_{r+1},\ldots,\xi_{n}\right)  ,0}}g\right)  \left(  \xi_{1},\ldots,\xi
_{r}\right)  \left\vert d^{r}\xi\right\vert _{K},
\]
with $\boldsymbol{P}_{_{\left(  \xi_{r+1},\ldots,\xi_{n}\right)  ,0}}g\left(
\xi_{1},\ldots,\xi_{r}\right)  \in\mathcal{H}_{\infty}\left(  K^{r}\right)  $,
cf. Lemma \ref{lemma4B_1}. Consequently, we can take $r=n$. By taking
$\alpha_{i}=N_{i}s+v_{i}-1$, $i=1,\ldots,n$, and applying Proposition
\ref{Prop3}-(i), we have that $E_{\widehat{g}}\left(  s;\boldsymbol{N}%
,\boldsymbol{v}\right)  $ has an analytic continuation the whole complex plane
as a $\mathcal{H}_{\infty}^{\ast}\left(  K^{n}\right)  $-valued \ meromorphic
function of $s$.

We now proceed to describe the possible poles of the analytic continuation of
$E_{\widehat{g}}\left(  s;\boldsymbol{N},\boldsymbol{v}\right)  $: we start
with the integral
\[
E_{\widehat{g}}\left(  s;\boldsymbol{N},\boldsymbol{v}\right)  =\left[
\mathcal{F}\left[  \prod\nolimits_{i=1}^{n}\left\vert \xi_{i}\right\vert
_{K}^{N_{i}\overline{s}+v_{i}-1}\right]  ,g\right]
\]
which holomorphic in $\operatorname{Re}(s)>\max_{1\leq i\leq n}\frac{-v_{i}%
}{N_{i}}$. Now, we take $\boldsymbol{\gamma}_{1}=\left(  \gamma_{1}^{\left(
1\right)  },\ldots,\gamma_{1}^{\left(  n\right)  }\right)  $ an `arbitrary
vector' in $\left(  \mathbb{R}_{+}\mathbb{\smallsetminus}\left\{  0\right\}
\right)  ^{n}$, and consider the integral
\[
\left[  \boldsymbol{D}^{\boldsymbol{\gamma}_{1}\ast}\mathcal{F}\left[
\prod\nolimits_{i=1}^{n}\left\vert \xi_{i}\right\vert _{K}^{N_{i}\overline
{s}+v_{i}-1}\right]  ,g\right]  =\int\nolimits_{K^{n}}\prod\nolimits_{i=1}%
^{n}\left\vert \xi_{i}\right\vert _{K}^{Ns_{i}+v_{i}-1}\widehat{\boldsymbol{D}%
^{\boldsymbol{\gamma}}g}\left(  \xi\right)  \left\vert d^{n}\xi\right\vert
_{K}%
\]
which holomorphic in $\operatorname{Re}(s)>\max_{1\leq i\leq n}\frac{-v_{i}%
}{N_{i}}$ since $\boldsymbol{D}^{\boldsymbol{\gamma}_{1}\ast}:\mathcal{H}%
_{\infty}^{\ast}\rightarrow\mathcal{H}_{\infty}^{\ast}$. By using that
\begin{align*}
&  \int\nolimits_{K^{n}}\prod\nolimits_{i=1}^{n}\left\vert \xi_{i}\right\vert
_{K}^{Ns_{i}+v_{i}-1}\widehat{\boldsymbol{D}^{\boldsymbol{\gamma}_{1}}%
g}\left(  \xi\right)  \left\vert d^{n}\xi\right\vert _{K}\\
&  =\prod\nolimits_{i=1}^{n}\Gamma\left(  N_{i}s+v_{i}\right)  \int
\nolimits_{K^{n}}\prod\nolimits_{i=1}^{n}f_{N_{i}s+v_{i}}\left(  \xi\right)
\text{ }\widehat{\boldsymbol{D}^{\boldsymbol{\gamma}_{1}}g}\left(  \xi\right)
\left\vert d^{n}\xi\right\vert _{K}\\
&  =\prod\nolimits_{i=1}^{n}\Gamma\left(  N_{i}s+v_{i}+\gamma_{1}^{\left(
i\right)  }\right)  \int\nolimits_{K^{n}}\prod\nolimits_{i=1}^{n}%
f_{N_{i}s+v_{i}+\gamma_{1}^{\left(  i\right)  }}\left(  \xi\right)  \text{
}\widehat{g}\left(  \xi\right)  \left\vert d^{n}\xi\right\vert _{K},
\end{align*}
and using Proposition \ref{Prop3}-(ii),%
\begin{align}
&  \prod\nolimits_{i=1}^{n}\Gamma\left(  N_{i}s+v_{i}\right)  \int
\nolimits_{K^{n}}\prod\nolimits_{i=1}^{n}\left\vert \xi_{i}\right\vert
_{K}^{-N_{i}s-v_{i}}\text{ }\boldsymbol{D}^{\boldsymbol{\gamma}_{1}}g\left(
\xi\right)  \left\vert d^{n}\xi\right\vert _{K}\label{Eq24}\\
&  =\prod\nolimits_{i=1}^{n}\Gamma\left(  N_{i}s+v_{i}+\gamma_{1}^{\left(
i\right)  }\right)  \int\nolimits_{K^{n}}\prod\nolimits_{i=1}^{n}\left\vert
\xi_{i}\right\vert _{K}^{-N_{i}s-v_{i}-\gamma_{1}^{\left(  i\right)  }}\text{
}g\left(  \xi\right)  \left\vert d^{n}\xi\right\vert _{K}.\nonumber
\end{align}
We take $\gamma_{1}^{\left(  i\right)  }=\beta N_{i}$ for $i=1,\ldots,n$, with
$\beta\in\mathbb{N\smallsetminus}\left\{  0\right\}  $ arbitrary, and set
$z=-\left(  s+\beta\right)  $, thus $-N_{i}s-v_{i}-\gamma_{1}^{\left(
i\right)  }=N_{i}z-v_{i}$ and $-N_{i}s-v_{i}=N_{i}z-v_{i}+\gamma_{1}^{\left(
i\right)  }$, therefore formula (\ref{Eq24}) becomes%
\begin{align}
&  \int\nolimits_{K^{n}}\prod\nolimits_{i=1}^{n}\left\vert \xi_{i}\right\vert
_{K}^{N_{i}z-v_{i}}\text{ }g\left(  \xi\right)  \left\vert d^{n}\xi\right\vert
_{K}\label{Eq26}\\
&  =\frac{\prod\nolimits_{i=1}^{n}\Gamma\left(  -N_{i}z+v_{i}-\gamma
_{1}^{\left(  i\right)  }\right)  }{\prod\nolimits_{i=1}^{n}\Gamma\left(
-N_{i}z+v_{i}\right)  }\int\nolimits_{K^{n}}\prod\nolimits_{i=1}^{n}\left\vert
\xi_{i}\right\vert _{K}^{N_{i}z-v_{i}+\gamma_{1}^{\left(  i\right)  }}\text{
}\boldsymbol{D}^{\boldsymbol{\gamma}_{1}}g\left(  \xi\right)  \left\vert
d^{n}\xi\right\vert _{K}.\nonumber
\end{align}
Now the integral in the left-hand side of (\ref{Eq26}) is holomorphic on
$\operatorname{Re}(z)>\max_{i}\frac{-1+v_{i}}{N_{i}}$, and the integral in the
right-hand side of (\ref{Eq26}) is holomorphic in $\operatorname{Re}%
(z)>-\beta+\max_{i}\frac{-1+v_{i}}{N_{i}}$, and the factor%
\[
\frac{\prod\nolimits_{i=1}^{n}\Gamma\left(  -N_{i}z+v_{i}-\gamma_{1}^{\left(
i\right)  }\right)  }{\prod\nolimits_{i=1}^{n}\Gamma\left(  -N_{i}%
z+v_{i}\right)  }=\prod\nolimits_{i=1}^{n}\left(  \frac{1-q^{-N_{i}%
z+v_{i}-\gamma_{1}^{\left(  i\right)  }-1}}{1-q^{N_{i}z-v_{i}+\gamma
_{1}^{\left(  i\right)  }}}\frac{1-q^{N_{i}z-v_{i}}}{1-q^{-N_{i}z+v_{i}-1}%
}\right)
\]
gives poles with real parts $\operatorname{Re}(z)=\frac{v_{i}}{N_{i}}-\beta$
or $\operatorname{Re}(z)=\frac{v_{i}}{N_{i}}-\frac{1}{N_{i}}$. Therefore, in
terms of the variable $s$, \ the integral in the right-hand side of
(\ref{Eq24}), which is holomorphic in $\operatorname{Re}(s)<-\beta+\max
_{i}\frac{1-v_{i}}{N_{i}}$, has a meromorphic continuation in the half-plane
$\operatorname{Re}(s)<\max_{i}\frac{1-v_{i}}{N_{i}}$, with possible poles
having real parts in the set
\begin{equation}
\cup_{i=1}^{n}\frac{-v_{i}}{N_{i}}\cup\cup_{i=1}^{n}\frac{-\left(
v_{i}+\gamma_{1}^{\left(  i\right)  }-1\right)  }{N_{i}}. \label{poles_1}%
\end{equation}
Therefore, the real parts of the possible poles of%
\begin{align*}
\left[  \boldsymbol{D}^{\boldsymbol{\gamma}_{1}\ast}\mathcal{F}\left[
\prod\nolimits_{i=1}^{n}\left\vert \xi_{i}\right\vert _{K}^{N_{i}\overline
{s}+v_{i}-1}\right]  ,g\right]   &  =\int\nolimits_{K^{n}}\prod\nolimits_{i=1}%
^{n}\left\vert \xi_{i}\right\vert _{K}^{Ns_{i}+v_{i}+\gamma_{1}^{\left(
i\right)  }-1}\widehat{g}\left(  \xi\right)  \left\vert d^{n}\xi\right\vert
_{K}\\
&  =E_{\widehat{g}}\left(  s;\boldsymbol{N},\boldsymbol{v}+\boldsymbol{\gamma
}_{1}\right)
\end{align*}
belong to set (\ref{poles_1}). \ We repeat the calculation starting with
$E_{\widehat{g}}\left(  s;\boldsymbol{N},\boldsymbol{v}+\boldsymbol{\gamma
}_{1}\right)  $ and $\boldsymbol{\gamma}_{2}=\left(  \gamma_{2}^{\left(
1\right)  },\ldots,\gamma_{2}^{\left(  n\right)  }\right)  \in\left(
\mathbb{R}_{+}\mathbb{\smallsetminus}\left\{  0\right\}  \right)  ^{n}$, to
obtain that the real parts of the possible poles of $E_{\widehat{g}}\left(
s;\boldsymbol{N},\boldsymbol{v}+\boldsymbol{\gamma}_{1}+\boldsymbol{\gamma
}_{2}\right)  $ belong to the set
\begin{equation}
\cup_{i=1}^{n}\frac{-v_{i}}{N_{i}}\cup\cup_{i=1}^{n}\frac{-\left(
v_{i}+\gamma_{1}^{\left(  i\right)  }-1\right)  }{N_{i}}\cup\cup_{i=1}%
^{n}\frac{-\left(  v_{i}+\gamma_{1}^{\left(  i\right)  }+\gamma_{2}^{\left(
i\right)  }-1\right)  }{N_{i}}. \label{poles_2}%
\end{equation}
By proceeding inductively, we obtain the description of the real parts of the
possible poles of $E_{\widehat{g}}\left(  s;\boldsymbol{N},\boldsymbol{v}%
\right)  $\ announced. Finally, by Lemma \ref{lemma4C}, $E_{\widehat{g}%
}\left(  s;\boldsymbol{N},\boldsymbol{v}\right)  $\ is holomorphic in the
half-plane $\operatorname{Re}(s)>-1$ and consequently all the real parts of
the poles of $E_{\widehat{g}}\left(  s;\boldsymbol{N},\boldsymbol{v}\right)  $
are negative real numbers, and thus in (\ref{poles_2}) we have to take
$\gamma_{1}^{\left(  i\right)  }\geq1$.
\end{proof}

\begin{remark}
We notice that there is no canonical way of picking the sequence of vectors
$\left\{  \boldsymbol{\gamma}_{m}\right\}  _{m\in\mathbb{N}}$, this is the
reason for Definition \ref{Def_sequence}. On the other hand, integral
$\int_{K}\left\vert x\right\vert _{K}^{Ns+v-1}e^{-\left\vert x\right\vert
_{K}^{\alpha}}\left\vert dx\right\vert _{K}$, with $\alpha>0$, has a
meromorphic continuation to the whole complex plane with possible poles
belonging to $-\left(  \frac{v+\alpha\mathbb{N}}{N}\right)  $.
\end{remark}

\subsection{Meromorphic Continuation of Local Zeta Functions in $\mathcal{H}%
_{\infty}$}

Only in this section, $K$ denotes a non-Archimedean local field of
characteristic zero, i.e. $K$ is a finite extension of $\mathbb{Q}_{p}$, the
field of $p$-adic numbers. In this section, we show the existence of a
meromorphic continuation for the functional $\widehat{\left\vert
\widetilde{\mathfrak{f}}\right\vert _{K}^{\overline{s}}}$, with
$\operatorname{Re}(s)>0$. A key ingredient is Hironaka's desingularization
theorem (analytic version). For an in-depth discussion of this result as well
as, an introduction to the necessary material, the reader may consult
\cite{H}, \cite{Igusa} and the references therein.

\begin{notation}
We set $\mathfrak{f}^{-1}\left(  0\right)  _{\text{sing}}:=\left\{  \xi\in
K^{n};\mathfrak{f}\left(  \xi\right)  =\nabla\mathfrak{f}\left(  \xi\right)
=0\right\}  $. If $\Pi$ is a $K$-analytic map, then the pull-back of the
differential form $\bigwedge\limits_{1\leq i\leq n}d\xi_{i}$ by $\Pi$ is
classically denoted as $\Pi^{\ast}\left(  \bigwedge\limits_{1\leq i\leq n}%
d\xi_{i}\right)  $, but since we use `$\ast$' in connection with dual space of
$\mathcal{H}_{\infty}$ and the adjoint of an operator, we modify this
classical notation as $\Pi^{\underline{\ast}}\left(  \bigwedge\limits_{1\leq
i\leq n}d\xi_{i}\right)  $. Similarly, in the case of function, we use the
notation $\phi^{\underline{\ast}}=\phi\circ\Pi$. This notation is used only in
this section. We denote by ${\LARGE 1}_{A}\left(  x\right)  $ the
characteristic function of $A$.
\end{notation}

\begin{theorem}
[Hironaka \cite{H}]\label{Theorem_hironaka}Let $K$ denote a non-Archimedean
local field of characteristic zero and $\mathfrak{f}\left(  \xi\right)  $ a
non-constant polynomial in $K\left[  \xi_{1},\ldots,\xi_{n}\right]  $. Put
$X=K^{n}$. Then there exist an $n$-dimensional $K$-analytic manifold $Y$, a
finite set $\mathcal{E}=\left\{  E\right\}  $ of closed submanifolds of $Y$ of
codimension $1$ with a pair of positive integers $\left(  N_{E},v_{E}\right)
$ assigned to each $E$, and a proper $K$-analytic mapping $\Pi:Y\rightarrow X$
satisfying the following conditions:

\noindent(i) $\Pi$ is the composite map of a finite number of monomial
transformations each with smooth center;

\noindent(ii) $\left(  \mathfrak{f}\circ\Pi\right)  ^{-1}\left(  0\right)
=\cup_{E\in\mathcal{E}}E$ and $\Pi$ induces a $K$-bianalytic map%
\begin{equation}
Y\smallsetminus\Pi^{-1}\left(  \mathfrak{f}^{-1}\left(  0\right)
_{\text{sing}}\right)  \rightarrow X\smallsetminus\mathfrak{f}^{-1}\left(
0\right)  _{\text{sing}}; \label{change_of_variables_2}%
\end{equation}

\noindent(iii) at every point $b$ of $Y$ if $E_{1}$,\ldots,$E_{r}$ are all the
$E$ in $\mathcal{E}$ containing $b$ with respective local equations $y_{1}%
$,\ldots,$y_{r}$ around $b$ and $\left(  N_{E_{i}},v_{E_{i}}\right)  =\left(
N_{i},v_{i}\right)  $, then there exist local coordinates of $Y$ around $b$ of
the form $\left(  y_{1},\ldots,y_{r},y_{r+1},\ldots,y_{n}\right)  $ such that%
\begin{equation}
\left(  \mathfrak{f}\circ\Pi\right)  \left(  y\right)  =\varepsilon\left(
y\right)  \prod\limits_{1\leq i\leq r}y_{i}^{N_{i}}\text{, \ \ }%
\Pi^{\underline{\ast}}\left(  \bigwedge\limits_{1\leq i\leq n}d\xi_{i}\right)
=\eta\left(  y\right)  \prod\limits_{1\leq i\leq r}y_{i}^{v_{i}-1}\left(
\bigwedge\limits_{1\leq i\leq n}dy_{i}\right)  \label{local_expressions}%
\end{equation}
on some neighborhood of $b$, in which $\varepsilon\left(  y\right)  $,
$\eta\left(  y\right)  $ are units of the local ring $\mathcal{O}_{b}$ of $Y$
at $b$.
\end{theorem}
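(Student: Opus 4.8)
This statement is Hironaka's celebrated theorem on resolution of singularities, in its $K$-analytic incarnation, so the proof is not something to be reconstructed from scratch: the deep part is precisely \cite{H}, and the plan is to quote it and then indicate the (standard) passage from the algebraic formulation to the analytic one over a non-Archimedean local field $K$ of characteristic zero, following the exposition in \cite[Chapter III]{Igusa}. First I would recall that Hironaka's theorem, as originally proved, applies to algebraic varieties over an arbitrary field of characteristic zero; applied to the hypersurface defined by $\mathfrak{f}$ in $K^{n}$ it yields a proper birational morphism of $K$-varieties which is an isomorphism away from $\mathfrak{f}^{-1}(0)_{\text{sing}}$, which is obtained as a composite of blow-ups along smooth centers, under which the total transform of $\mathfrak{f}$ is a normal crossings divisor, and which carries numerical data $(N_{E},v_{E})$ on each component $E$ of that divisor, with the monomial normal forms valid in the Zariski-local (hence complete-local) ring at every point.

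Next I would pass to $K$-rational points. Since $K$ is a complete non-Archimedean field of characteristic zero, the inverse and implicit function theorems hold over $K$, so the set of $K$-points of the smooth $K$-variety produced by Hironaka carries a natural $n$-dimensional $K$-analytic manifold structure; this is the manifold $Y$. The morphism induces a $K$-analytic map $\Pi:Y\rightarrow X=K^{n}$, and properness as a morphism of $K$-varieties gives properness of $\Pi$ in the topological sense (in particular $\Pi^{-1}$ of a compact set is compact, which is what will actually be used in Section \ref{Sect_elelmetary_integrals}). The bianalytic statement (\ref{change_of_variables_2}) and the local expressions (\ref{local_expressions}) then descend verbatim, with $\varepsilon(y)$ and $\eta(y)$ reinterpreted as units of the local ring $\mathcal{O}_{b}$ of $K$-analytic germs at $b$; here one uses only that a function which is a unit in the algebraic (or formal) local ring is nonvanishing at $b$ and hence remains a unit in the analytic local ring.

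The one point I would treat with genuine care is the legitimacy of replacing the algebraic local monomializations by analytic ones: this is exactly where completeness of $K$ enters, since it guarantees convergence of the relevant power series and the validity of the local analytic inverse theorems — over a non-complete base the analytic form of the statement would simply fail. I expect this to be the main (and essentially only) obstacle, and it is classical; everything else is bookkeeping of Hironaka's output.

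Finally I would close by recording how the theorem is to be used, since that, rather than the theorem itself, is where the work of the following subsection lies: one pulls back the integral defining $Z_{\widehat{g}}\left(s,\mathfrak{f}\right)$ by $\Pi$, covers $Y$ (restricted to a neighborhood of the relevant compact set) by finitely many charts in which (\ref{local_expressions}) holds, and checks via Lemma \ref{lemma4B_1} and Lemma \ref{lemma4C} that in each such chart the pulled-back function $\widehat{g}\circ\Pi$ times the Jacobian factor still belongs to an appropriate $\mathcal{H}_{\infty}$-type space; this reduces $Z_{\widehat{g}}\left(s,\mathfrak{f}\right)$ to a finite sum of elementary integrals $E_{\widehat{g}}\left(s;\boldsymbol{N},\boldsymbol{v}\right)$, whose meromorphic continuation and poles are supplied by Proposition \ref{Prop4}.
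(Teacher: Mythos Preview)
Your reading is correct: the paper does not prove this statement at all. It is stated as a named theorem attributed to Hironaka \cite{H}, and the surrounding text explicitly directs the reader to \cite{H} and \cite{Igusa} for the proof and for the passage to the $K$-analytic setting. So your proposal --- quote \cite{H} and invoke the standard algebraic-to-analytic transfer as in \cite[Chapter III]{Igusa} --- is exactly what the paper does, only you have spelled out more of the transfer than the paper bothers to.

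One remark on your final paragraph, which is really a preview of Theorem \ref{Theorem4} rather than part of the present proof. Your sketch of how the resolution is used diverges from the paper in two small but real ways. First, the paper does not cover by \emph{finitely} many charts: it uses that $Y$ is $2$-countable and totally disconnected to produce a \emph{countable} disjoint cover $\{U_m\}_{m\in\mathbb{N}}$ by open compact sets on which (\ref{local_expressions}) holds; the sum in (\ref{Eq30}) is genuinely over $m\in\mathbb{N}$. Second, the paper does not verify directly that $\widehat{g}\circ\Pi$ times the Jacobian lies in an $\mathcal{H}_\infty$-type space on each chart. Instead it first establishes the identity (\ref{Eq30}) for $\phi\in\mathcal{D}(K^n)$ (where $\phi\circ\Pi$ is locally constant and everything is elementary), observes that each term on the right defines an element of $\mathcal{H}_\infty^\ast$, and then extends to $g\in\mathcal{H}_\infty$ by density of $\mathcal{D}(K^n)$ in $(\mathcal{H}_\infty,d)$. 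Your proposed route via Lemmas \ref{lemma4B_1} and \ref{lemma4C} would require controlling $\widehat{g}\circ\Pi$ in Sobolev norms after a nonlinear change of variables, which is not addressed anywhere in the paper and is not obviously straightforward.
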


We call the pair $(Y,\Pi)$ \textit{an embedded resolution of singularities of
the map} $\mathfrak{f}:K^{n}\rightarrow K$. The set $\left\{  \left(
N_{E},v_{E}\right)  \right\}  _{E\in\mathcal{E}}$ is called \textit{the
numerical data of the resolution} $\left(  Y,\Pi\right)  $.

\begin{remark}
The following facts will be used later on: (i) $Y$ is a $2$-countable and
totally disconnected space. This follows from the fact that $\mathbb{Q}%
_{p}^{n}$ is a $2$-countable space, and from the fact that $Y$ is obtained by
a gluing a finite number of subspaces of $\mathbb{Q}_{p}^{n}$.

\noindent(ii) There exists a covering of $Y$ of the form $\cup_{m\in
\mathbb{N}}$ $U_{m}$ with each $U_{m}$ open and closed, $U_{i}\cap
U_{j}=\emptyset$ if $i\neq j$ and \ such that in local coordinates each
$U_{m}$ has the form $c_{m}+\left(  \pi^{e_{m}}R_{K}\right)  ^{n}$ with
$c_{m}\in K^{n}$ and $e_{m}\in\mathbb{N}$ for each $m$.

\noindent(iii) We denote by $\mathcal{D}(Y)$ the space of complex-valued
locally constant functions with compact support defined in $Y$, any such
function is a linear combination of characteristic functions of open and
compact subsets of $Y$, see e.g. \cite[Chapter 7]{Igusa}.

\noindent(iv) The differential form $%
{\textstyle\bigwedge\nolimits_{1\leq i\leq n}}
d\xi_{i}$ in $K^{n}$ (considered as $K$-analytic manifold) induces a measure,
denoted as $\left\vert
{\textstyle\bigwedge\nolimits_{1\leq i\leq n}}
d\xi_{i}\right\vert _{K}$, which agrees with normalized Haar measure of
$K^{n}$, see e.g. \cite[Chapter 7]{Igusa}.
\end{remark}

\begin{theorem}
\label{Theorem4} Assume that $K$ is a non-Archimedean local field of
characteristic zero and let $\mathfrak{f}$ denote an arbitrary element of
$R_{K}\left[  \xi_{1},\ldots,\xi_{n}\right]  \smallsetminus R_{K}$; take
$g\in\mathcal{H}_{\infty}\left(  K^{n}\right)  $, $s\in\mathbb{C}$, with
$\operatorname{Re}(s)>0$. Then $\left[  \widehat{\left\vert \widetilde
{\mathfrak{f}}\right\vert _{K}^{\overline{s}}},g\right]  $ defines a
$\mathcal{H}_{\infty}^{\ast}\left(  K^{n}\right)  $-valued holomorphic
function of $s$, which admits a meromorphic continuation, denoted again as
$\left[  \widehat{\left\vert \widetilde{\mathfrak{f}}\right\vert
_{K}^{\overline{s}}},g\right]  $, to the whole complex plane. Furthermore, if
$\Pi:Y\rightarrow X$, with $X=K^{n}$, $\mathcal{E}=\left\{  E\right\}  $ and
$\left(  N_{E},v_{E}\right)  $ as in Theorem \ref{Theorem_hironaka}, then the
possible real parts of the poles of $\left[  \widehat{\left\vert
\widetilde{\mathfrak{f}}\right\vert _{K}^{\overline{s}}},g\right]  $ are
negative real numbers belonging to the set
\[
\bigcup\limits_{E\in\mathcal{E}}\frac{-\left(  v_{E}+M_{E}\right)  }{N_{E}},
\]
where each $M_{E}$ is a generalized arithmetic progression.
\end{theorem}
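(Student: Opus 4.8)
The plan is to reduce the problem, via Hironaka's resolution of singularities (Theorem \ref{Theorem_hironaka}), to a finite sum of elementary integrals of the type handled in Proposition \ref{Prop4}. First I would recall that, by Lemma \ref{lemma4B}, $\left[\widehat{\left\vert\widetilde{\mathfrak{f}}\right\vert_{K}^{\overline{s}}},g\right]$ is a $\mathcal{H}_{\infty}^{\ast}$-valued holomorphic function of $s$ in the half-plane $\operatorname{Re}(s)>0$, so it remains to produce the meromorphic continuation and describe the poles. Fix $g\in\mathcal{H}_{\infty}(K^{n})$. Because $\widehat{g}\in L^{1}\cap C_{0}(K^{n})$ (Lemma \ref{Lemma2A}), the integral $\int_{K^{n}\smallsetminus\widetilde{\mathfrak{f}}^{-1}(0)}\left\vert\mathfrak{f}(\xi)\right\vert_{K}^{s}\widehat{g}(\xi)\left\vert d^{n}\xi\right\vert_{K}$ makes sense; applying the change of variables $\xi=\Pi(y)$ furnished by Theorem \ref{Theorem_hironaka} (valid since $\Pi$ is a bianalytic map off the singular locus, which has measure zero, and the pull-back of Haar measure transforms as in \eqref{local_expressions}), we get
\[
\left[\widehat{\left\vert\widetilde{\mathfrak{f}}\right\vert_{K}^{\overline{s}}},g\right]=\int\limits_{Y}\left\vert(\mathfrak{f}\circ\Pi)(y)\right\vert_{K}^{s}\,(\widehat{g}\circ\Pi)(y)\,\left\vert\Pi^{\underline{\ast}}\left(\bigwedge\nolimits_{1\leq i\leq n}d\xi_{i}\right)\right\vert_{K}.
\]
Using a partition of unity subordinate to the covering $\{U_{m}\}_{m\in\mathbb{N}}$ from Remark (ii) after Theorem \ref{Theorem_hironaka} — where each $U_{m}$ is, in local coordinates, a polydisc $c_{m}+(\pi^{e_{m}}R_{K})^{n}$ on which \eqref{local_expressions} holds — the integral breaks into a countable sum of local pieces, each of the form $\int_{(\pi^{e_{m}}R_{K})^{n}}\prod_{i=1}^{r}\left\vert y_{i}\right\vert_{K}^{N_{i}s+v_{i}-1}\cdot h_{m}(y)\,\left\vert d^{n}y\right\vert_{K}$, where $h_{m}(y)=|\varepsilon(y)|_{K}^{s}|\eta(y)|_{K}(\widehat{g}\circ\Pi)(y)$ absorbs the units and the transplanted $\widehat{g}$.

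The key point is that, after translating and rescaling, each local piece is (a modification of) an elementary integral $E_{\widehat{G}}(s;\boldsymbol{N},\boldsymbol{v})$ from Section \ref{Sect_elelmetary_integrals}, applied to a suitable $G\in\mathcal{H}_{\infty}(K^{r})$. Here one uses that $\widehat{g}\circ\Pi$, restricted to a polydisc and pushed down in the transverse variables $y_{r+1},\dots,y_{n}$ via the operators $\boldsymbol{P}_{J,\xi_J^{(0)}}$ of Lemma \ref{lemma4B_1}, yields functions in $\mathcal{H}_{\infty}$ of the remaining variables — this is the non-Archimedean substitute for the classical estimates on derivatives of $\widehat g$. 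Since the units $\varepsilon,\eta$ are locally constant (being $K$-analytic units, they are constant on small enough polydiscs, refining the covering if necessary), $|\varepsilon|_K^s\equiv q^{-c_m s}$ and $|\eta|_K\equiv$ const on each $U_m$, so they contribute only an overall factor $q^{-c_m s}$ times a constant; this does not affect the location of poles. Applying Proposition \ref{Prop4} to each local piece gives a meromorphic continuation of that piece to all of $\mathbb{C}$ with real parts of poles in $\cup_{i=1}^{r}\frac{-(v_i+M_i)}{N_i}$ for generalized arithmetic progressions $M_i$, i.e. in $\cup_{E\in\mathcal{E}}\frac{-(v_E+M_E)}{N_E}$.

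The main obstacle — and where care is genuinely needed — is controlling the \emph{countable} sum over $m$: one must show that the meromorphically continued local pieces sum to a $\mathcal{H}_{\infty}^{\ast}$-valued meromorphic function, locally uniformly in $s$ away from the (locally finite) pole set. For this I would establish a quantitative bound: for $s$ in a compact set avoiding the poles, the $\mathcal{H}_{\infty}^{\ast}$-norm (i.e. some $\left\Vert\cdot\right\Vert_{-l}$) of the $m$-th continued piece is dominated by $C(s)\,\left\Vert g\right\Vert_{l'}^{(m)}$, where $\left\Vert g\right\Vert_{l'}^{(m)}$ is the contribution of the chart $U_m$ to $\left\Vert g\right\Vert_{l'}$, together with a factor coming from $q^{-e_m(\text{something})}$ that is summable because the $U_m$ are disjoint and $\widehat g\in L^1$ with a tail controlled by the seminorms $\left\Vert g\right\Vert_{l'}$. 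Concretely, the bounds derived in the proof of Proposition \ref{Prop4} (which go through \eqref{Eq24}--\eqref{Eq26} and ultimately through the estimates of Lemma \ref{lemma4C}) are of the form $\left\vert E_{\widehat{G}}(s;\boldsymbol{N},\boldsymbol{v})\right\vert\leq C(s,n,l)\left\Vert G\right\Vert_{l}$; summing these over the disjoint charts and using $\sum_m\left\Vert G_m\right\Vert_l^2\leq C\left\Vert g\right\Vert_{l+\text{const}}^2$ (which follows from the change-of-variables being locally a monomial map plus the action of the continuous operators from Lemmas \ref{lemma4B_1} and \ref{lemma3A}) gives the required convergence. Standard Weierstrass-type arguments (normal convergence of meromorphic $\mathcal{H}_{\infty}^{\ast}$-valued functions) then deliver the meromorphic continuation on all of $\mathbb{C}$ with the stated pole set. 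Finally, Lemma \ref{lemma4C} (or equivalently the $\operatorname{Re}(s)>-1$ holomorphy already noted for elementary integrals) shows the poles have negative real part, forcing $\gamma_1^{(i)}\geq 1$ in each $M_E$ exactly as in Proposition \ref{Prop4}.
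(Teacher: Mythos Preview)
Your overall strategy---reduce via Hironaka to monomial integrals and invoke Proposition~\ref{Prop4}---is exactly the paper's. The difference is in how you pass from the resolution to the elementary integrals, and that is where your argument has a gap.

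You work directly with $g\in\mathcal{H}_\infty$ and assert that $(\widehat{g}\circ\Pi)|_{U_m}$, after integrating out the transverse variables via the operators $\boldsymbol{P}_{J,\xi_J^{(0)}}$ of Lemma~\ref{lemma4B_1}, lands in $\mathcal{H}_\infty$ of the remaining variables. But Lemma~\ref{lemma4B_1} takes as input an element of $\mathcal{H}_\infty(K^n_y)$, and you never show that $\widehat{g}\circ\Pi$ (or $\mathcal{F}^{-1}(\widehat{g}\circ\Pi)$) belongs to $\mathcal{H}_\infty(K^n_y)$. The space $\mathcal{H}_\infty$ is defined by Fourier-side weighted $L^2$ conditions, and there is no reason pullback by the highly nonlinear resolution map $\Pi$ should respect those conditions; your ``non-Archimedean substitute for estimates on derivatives of $\widehat{g}$'' is asserted, not proved. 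Even granting that each $h_m$ has compact support and is bounded (so that $\mathcal{F}^{-1}h_m\in\mathcal{H}_\infty(K^n_y)$ as a set-theoretic statement), you would still need the map $g\mapsto\mathcal{F}^{-1}h_m$ to be \emph{continuous} from $\mathcal{H}_\infty(K^n_x)$ to $\mathcal{H}_\infty(K^n_y)$ to conclude that the continued local piece is $\mathcal{H}_\infty^\ast(K^n_x)$-valued, and your sketch does not supply this.

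The paper circumvents all of this by first taking $\phi\in\mathcal{D}(K^n)$ rather than a general $g\in\mathcal{H}_\infty$. Then $\widehat{\phi}\in\mathcal{D}(K^n)$, and since $\Pi$ is proper and $K$-analytic, $\widehat{\phi}\circ\Pi$ is locally constant with compact support, i.e.\ a test function on $Y$. Two consequences: (a) on each chart $U_m$ the integrand is a genuine test function in the $y$-variables, so Proposition~\ref{Prop4} applies with no further justification; (b) because $\operatorname{supp}(\widehat{\phi}\circ\Pi)$ is compact and the $U_m$ are disjoint, the sum over $m$ is \emph{finite}, so your entire convergence discussion becomes unnecessary. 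Having established the identity \eqref{Eq30} on the dense subspace $\mathcal{D}\subset\mathcal{H}_\infty$, the paper then extends it to $\mathcal{H}_\infty$ by density, using that the left-hand side is already known to lie in $\mathcal{H}_\infty^\ast$ for $\operatorname{Re}(s)>0$ (Lemma~\ref{lemma4B}), and invokes Proposition~\ref{Prop4} termwise. This is the step you should replace your direct argument with.
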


\begin{proof}
The fact that $\left[  \widehat{\left\vert \widetilde{\mathfrak{f}}\right\vert
_{K}^{\overline{s}}},g\right]  $ defines a $\mathcal{H}_{\infty}^{\ast}\left(
K^{n}\right)  $-valued holomorphic function of $s$ in the half-plane
$\operatorname{Re}(s)>0$ follows from Lemma \ref{lemma4B}. To establish the
meromorphic continuation, we pick an embedded resolution of singularities of
the map $\mathfrak{f}:K^{n}\rightarrow K$ as in Theorem \ref{Theorem_hironaka}
and we use all the notation that was introduced there. We take $\phi
\in\mathcal{D}(K^{n})$ and use (\ref{change_of_variables_2}) as an analytic
change of variables in $\left[  \widehat{\left\vert \widetilde{\mathfrak{f}%
}\right\vert _{K}^{\overline{s}}},\phi\right]  $ to obtain%
\[
\left[  \widehat{\left\vert \widetilde{\mathfrak{f}}\right\vert _{K}%
^{\overline{s}}},\phi\right]  =\int\limits_{X\smallsetminus\mathfrak{f}%
^{-1}\left(  0\right)  }\left\vert \mathfrak{f}\circ\Pi\left(  y\right)
\right\vert _{K}^{\overline{s}}\left(  \phi\circ\Pi\right)  \left(  y\right)
\left\vert \Pi^{\underline{\ast}}\left(  \bigwedge\limits_{1\leq i\leq n}%
d\xi_{i}\right)  \right\vert _{K}\text{ for }\operatorname{Re}(\overline
{s})>0\text{.}%
\]
At very point $b\in Y$ we can take a chart $(V,h_{V})$ with $V$ open and
compact and coordinates $\left(  y_{1},\ldots,y_{r},y_{r+1},\ldots
,y_{n}\right)  $ such that formulas (\ref{local_expressions}) hold in $V$.
Since $\phi\circ\Pi$, $\left\vert \varepsilon\left(  y\right)  \right\vert
_{K}$, $\left\vert \eta\left(  y\right)  \right\vert _{K}$ are locally
constant functions, by subdividing $V$ as a finite union of disjoint open and
compact subsets $U_{m}$, we have $\phi\circ\Pi\mid_{U_{m}}=\phi\left(
\Pi\left(  b\right)  \right)  $, $\left\vert \varepsilon\left(  y\right)
\right\vert _{K}\mid_{U_{m}}=\left\vert \varepsilon\left(  b\right)
\right\vert _{K}$, $\left\vert \eta\left(  y\right)  \right\vert _{K}%
\mid_{U_{m}}=\left\vert \eta\left(  b\right)  \right\vert _{K}$ and further
$h_{V}\left(  U_{m}\right)  =\widetilde{y}_{m}+\pi^{e_{m}}R_{K}^{n}=B_{-e_{m}%
}^{n}\left(  \widetilde{y}_{m}\right)  $ for some $\widetilde{y}_{m}\in K^{n}%
$, $e_{m}\in\mathbb{N}$. Now, by using the fact that $Y$ is $2$-countable we
may assume that $\left\{  U_{m}\right\}  _{m\in\mathbb{N}}$ is a covering of
$Y$ consisting of open and compact subsets which are pairwise disjoint.
Consequently, we have, first, a map%
\[%
\begin{array}
[c]{ccc}%
\underline{\ast}:\mathcal{D}(X) & \rightarrow & \mathcal{D}(Y)\\
&  & \\
\phi & \rightarrow & \phi^{\underline{\ast}}:=\phi\circ\Pi,
\end{array}
\]
in addition, $\phi^{\underline{\ast}}\mid_{U_{m}}$ is an element of
$\mathcal{D}\left(  B_{-e_{m}}^{n}\left(  \widetilde{y}_{m}\right)  \right)
\hookrightarrow\mathcal{D}(K^{n})$, where $K^{n}$ is \ an affine space with
coordinates $\left(  y_{1},\ldots,y_{r},y_{r+1},\ldots,y_{n}\right)  $; and
second,%
\begin{equation}
\left[  \widehat{\left\vert \widetilde{\mathfrak{f}}\left(  \xi\right)
\right\vert _{K}^{\overline{s}}},\phi\left(  \xi\right)  \right]  =%
{\textstyle\sum\limits_{m\in\mathbb{N}}}
d_{m}\left[  \mathcal{F}\left(  \widetilde{{\LARGE 1}}_{B_{-e_{m}}^{n}\left(
\widetilde{y}_{m}\right)  }\left(  y\right)  \prod\nolimits_{i=1}%
^{r}\left\vert y_{i}\right\vert _{K}^{N_{i}\overline{s}+v_{i}-1}\right)
,\phi^{\underline{\ast}}\left(  y\right)  \right]  \text{ } \label{Eq30}%
\end{equation}
for $\operatorname{Re}(\overline{s})>0$, where ${\LARGE 1}_{B_{-e_{m}}%
^{n}\left(  \widetilde{y}_{m}\right)  }\left(  y\right)  $ is the
characteristic function of $\widetilde{y}_{m}+\pi^{e_{m}}R_{K}^{n}$, $1\leq
r=r(n)\leq n$, and ${\LARGE 1}_{B_{-e_{m}}^{n}\left(  \widetilde{y}%
_{m}\right)  }\left(  y\right)  \phi^{\underline{\ast}}\left(  y\right)  $ is
an element of $\mathcal{D}(K^{n})$. Since $\left(  \mathcal{D}(K^{n}%
),d\right)  $ is dense in $\left(  \mathcal{H}_{\infty},d\right)  $ and
\[
\text{ }\mathcal{F}\left(  \widetilde{{\LARGE 1}}_{B_{-e_{m}}^{n}\left(
\widetilde{y}_{m}\right)  }\left(  y\right)  \prod\nolimits_{i=1}%
^{r}\left\vert y_{i}\right\vert _{K}^{N\overline{s}+v_{i}-1}\right)
\in\mathcal{H}_{\infty}^{\ast}\left(  K^{n}\right)  \text{ for }%
\operatorname{Re}(\overline{s})>0\text{, }m\in\mathbb{N}\text{,}%
\]
then (\ref{Eq30}) extends to an equality between functionals in $\mathcal{H}%
_{\infty}^{\ast}$ in the half-plane $\operatorname{Re}(\overline{s})>0$. Then,
from Proposition \ref{Prop4} follows that $\widehat{\left\vert \widetilde
{\mathfrak{f}}\left(  \xi\right)  \right\vert _{K}^{s}}$ has a meromorphic
continuation to the whole complex plane as a $\mathcal{H}_{\infty}^{\ast}%
$-valued function and that the real parts of the possible poles belong to the
set%
\[
\bigcup\limits_{E\in\mathcal{E}\text{ }}\left\{  \frac{-\left(  v_{E}%
+M_{E}\right)  }{N_{E}}\right\}  .
\]

\end{proof}

\begin{remark}
In \cite[Chapter III, Section\ 5]{Igusa1} Igusa computed an embedded
resolution of singularities for a strongly non-degenerate form, with numerical
data $\left\{  \left(  1,1\right)  ,\left(  d,n\right)  \right\}  $. Then, the
Claim in Section \ref{Sect_Local_zeta_forms} agrees with Theorem
\ref{Theorem4}.
\end{remark}

\section{Fundamental solutions and local zeta functions}

\begin{theorem}
\label{Theorem4A}Let $\mathfrak{f}$ be a non-constant polynomial with
coefficients in $R_{K}$, with $K$ a non-Archimedean local field of arbitrary
characteristic. Then, the following assertions are equivalent:

\noindent(i) there exists $E\in\mathcal{H}_{\infty}^{\ast}$ such that
$\widehat{E}\left\vert \mathfrak{f}\right\vert _{K}=1$ in $L^{2}$;

\noindent(ii) set $\boldsymbol{A}(\partial,\mathfrak{f})g=\mathcal{F}%
^{-1}\left(  \left\vert \mathfrak{f}\right\vert _{K}\mathcal{F}\left(
g\right)  \right)  $ for $g\in Dom\left(  \boldsymbol{A}(\partial
,\mathfrak{f})\right)  :=\left\{  g\in L^{2};\left\vert \mathfrak{f}%
\right\vert _{K}\widehat{g}\in L^{2}\right\}  $. There exists $E\in
\mathcal{H}_{\infty}^{\ast}$ such that $\boldsymbol{A}^{\ast}(\partial
,\mathfrak{f})E=\delta$ in $\mathcal{H}_{\infty}^{\ast}$;

\noindent(iii) there exists $E\in\mathcal{H}_{\infty}^{\ast}$ such that $E\ast
h\in\mathcal{H}_{\infty}^{\ast}$ for any $h\in\mathcal{H}_{\infty}$, and
$u=E\ast g$ is a solution of $\boldsymbol{A}^{\ast}(\partial,\mathfrak{f})u=g$
in $\mathcal{H}_{\infty}^{\ast}$, for any $g\in\mathcal{H}_{\infty}$.
\end{theorem}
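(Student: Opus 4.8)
The plan is to prove the cycle $\mathrm{(i)}\Rightarrow\mathrm{(ii)}\Rightarrow\mathrm{(iii)}\Rightarrow\mathrm{(i)}$. The whole argument amounts to translating operator identities on $\mathcal{H}_{\infty}^{\ast}$ into pointwise identities between the corresponding Fourier transforms; by (\ref{H_infinity_*}) and Lemma \ref{Lemma2A} these Fourier transforms are genuine $L_{loc}^{2}$ functions with controlled growth, so multiplying them by $\left\vert \mathfrak{f}\right\vert _{K}$ (continuous, hence locally bounded) or by $\widehat{g}$ with $g\in\mathcal{H}_{\infty}$ (which lies in $L^{1}(K^{n})\cap L^{\infty}(K^{n})$) causes no difficulty.

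First I would assemble a short toolbox. (a) From the defining relation $\left[ \boldsymbol{A}^{\ast}(\partial,\mathfrak{f})T,g\right] =\left[ T,\boldsymbol{A}(\partial,\mathfrak{f})g\right] $, the formula (\ref{pairing}) for the pairing, and the fact that $\left\vert \mathfrak{f}\right\vert _{K}$ is real valued, one gets, after testing against $\widehat{g}$ with $g\in\mathcal{D}(K^{n})$ (so $\widehat{g}$ ranges over $\mathcal{D}(K^{n})$), the identity $\widehat{\boldsymbol{A}^{\ast}(\partial,\mathfrak{f})T}=\left\vert \mathfrak{f}\right\vert _{K}\widehat{T}$ almost everywhere for every $T\in\mathcal{H}_{\infty}^{\ast}$; this is consistent with the self-adjoint $L^{2}$ description of $\boldsymbol{A}(\partial,\mathfrak{f})$ used in $\mathrm{(ii)}$. (b) By Remark \ref{Note_Dirac_dist}, $\left[ \delta,g\right] =g(0)=\int_{K^{n}}\widehat{g}\left\vert d^{n}\xi\right\vert _{K}$, whence $\widehat{\delta}=1$ almost everywhere, $\delta\in\mathcal{H}_{\infty}^{\ast}$, and $\delta\ast g=g$ for every $g\in\mathcal{H}_{\infty}$. (c) The convolution lemma: for $E\in\mathcal{H}_{\infty}^{\ast}$ and $g\in\mathcal{H}_{\infty}$, defining $E\ast g:=\mathcal{F}^{-1}(\widehat{E}\,\widehat{g})$, one has $E\ast g\in\mathcal{H}_{\infty}^{\ast}$ (so in particular $\widehat{E\ast g}=\widehat{E}\,\widehat{g}$). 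To see this pick $m$ with $E\in\mathcal{H}_{-m}$; since $g\in\mathcal{H}_{\infty}\subset L^{1}(K^{n})$ by Lemma \ref{Lemma2A}-(v), $\widehat{g}\in L^{\infty}$, so $\int_{K^{n}}\left[ \xi\right] _{K}^{-m}\left\vert \widehat{E}\,\widehat{g}\right\vert ^{2}\left\vert d^{n}\xi\right\vert _{K}\leq\left\Vert \widehat{g}\right\Vert _{L^{\infty}}^{2}\left\Vert E\right\Vert _{-m}^{2}<\infty$, i.e. $E\ast g\in\mathcal{H}_{-m}\subset\mathcal{H}_{\infty}^{\ast}$.

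With these facts the implications are short. $\mathrm{(i)}\Leftrightarrow\mathrm{(ii)}$: by (a) and (b), $\widehat{\boldsymbol{A}^{\ast}(\partial,\mathfrak{f})E}=\left\vert \mathfrak{f}\right\vert _{K}\widehat{E}$ and $\widehat{\delta}=1$, so, using injectivity of the Fourier transform on $\mathcal{D}^{\prime}(K^{n})$ and the fact that $\boldsymbol{A}^{\ast}(\partial,\mathfrak{f})E$ and $\delta$ both lie in $\mathcal{H}_{\infty}^{\ast}$, the identity $\boldsymbol{A}^{\ast}(\partial,\mathfrak{f})E=\delta$ in $\mathcal{H}_{\infty}^{\ast}$ is equivalent to $\left\vert \mathfrak{f}\right\vert _{K}\widehat{E}=1$ almost everywhere, which is $\mathrm{(i)}$. $\mathrm{(ii)}\Rightarrow\mathrm{(iii)}$: assuming $\left\vert \mathfrak{f}\right\vert _{K}\widehat{E}=1$ a.e., for every $g\in\mathcal{H}_{\infty}$ we have $E\ast g\in\mathcal{H}_{\infty}^{\ast}$ by (c) and, by (a), $\widehat{\boldsymbol{A}^{\ast}(\partial,\mathfrak{f})(E\ast g)}=\left\vert \mathfrak{f}\right\vert _{K}\widehat{E}\,\widehat{g}=\widehat{g}$, so $\boldsymbol{A}^{\ast}(\partial,\mathfrak{f})(E\ast g)=g$ in $\mathcal{H}_{\infty}^{\ast}$ (equivalently $(\boldsymbol{A}^{\ast}(\partial,\mathfrak{f})E)\ast g=\delta\ast g=g$ by (b)), i.e. $u=E\ast g$ solves the equation. $\mathrm{(iii)}\Rightarrow\mathrm{(i)}$: by hypothesis and (a), $\widehat{g}=\widehat{\boldsymbol{A}^{\ast}(\partial,\mathfrak{f})(E\ast g)}=\left\vert \mathfrak{f}\right\vert _{K}\widehat{E}\,\widehat{g}$ almost everywhere for every $g\in\mathcal{H}_{\infty}$; restricting to $g\in\mathcal{D}(K^{n})$, where $g\mapsto\widehat{g}$ is an isomorphism of $\mathcal{D}(K^{n})$, and choosing for an arbitrary ball $B$ a $g$ with $\widehat{g}\equiv 1$ on $B$, we get $\left\vert \mathfrak{f}\right\vert _{K}\widehat{E}=1$ almost everywhere on $B$; since $B$ is arbitrary, $\left\vert \mathfrak{f}\right\vert _{K}\widehat{E}=1$ almost everywhere, which is $\mathrm{(i)}$.

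The only step that genuinely needs care, and which I would single out as the crux, is the convolution lemma (c): one must fix a definition of $E\ast g$ for a distribution $E\in\mathcal{H}_{\infty}^{\ast}$ and a function $g\in\mathcal{H}_{\infty}$ compatible with the convolution theorem $\widehat{E\ast g}=\widehat{E}\,\widehat{g}$, and then verify that membership in $\mathcal{H}_{\infty}^{\ast}$ is preserved; this is exactly where the quantitative structure of the norms $\left\Vert \cdot\right\Vert _{-m}$ and the embedding $\mathcal{H}_{\infty}\subset L^{1}(K^{n})$ enter. Everything else reduces to formal Fourier manipulations together with facts already established in the excerpt: the continuity of $\boldsymbol{A}(\partial,\mathfrak{f})$ on $\mathcal{H}_{\infty}$ and of its adjoint on $\mathcal{H}_{\infty}^{\ast}$ (Lemma \ref{lemma3A} and Section \ref{Sec_dual_operator}), the identification $\delta=\widehat{1}\in\mathcal{H}_{\infty}^{\ast}$ with $\widehat{\delta}=1$, and $\delta\ast g=g$ on $\mathcal{H}_{\infty}$ (Remark \ref{Note_Dirac_dist}).
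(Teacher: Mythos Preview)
Your proof is correct and follows essentially the same route as the paper: both arguments reduce the operator identities to pointwise Fourier identities via the pairing (\ref{pairing}), and the key technical point---that $E\ast g\in\mathcal{H}_{\infty}^{\ast}$ because $\left\Vert E\ast g\right\Vert_{-m}\leq\left\Vert\widehat{g}\right\Vert_{L^{\infty}}\left\Vert E\right\Vert_{-m}$ using $\widehat{g}\in L^{\infty}$ from Lemma~\ref{Lemma2A}-(v)---is exactly the paper's estimate in the step (ii)$\Rightarrow$(iii). The only organizational difference is that you isolate the Fourier identity $\widehat{\boldsymbol{A}^{\ast}(\partial,\mathfrak{f})T}=\left\vert\mathfrak{f}\right\vert_{K}\widehat{T}$ and the convolution lemma as preliminary tools, whereas the paper computes each implication directly with the pairing; for (iii)$\Rightarrow$(i) you localize with $\widehat{g}=\mathbf{1}_{B}$ while the paper argues via orthogonality in $L^{2}$, but these are equivalent.
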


\begin{definition}
The functional $E\in\mathcal{H}_{\infty}^{\ast}$ is called a fundamental
solution for $\boldsymbol{A}^{\ast}(\partial,\mathfrak{f})$.
\end{definition}

\begin{proof}
(i)$\Rightarrow$(ii) Since $\boldsymbol{A}(\partial,\mathfrak{f}%
):\mathcal{H}_{\infty}\rightarrow\mathcal{H}_{\infty}$ is a continuous
operator, cf. Lemma \ref{lemma3A}, and $E\in\mathcal{H}_{\infty}^{\ast}$,
\begin{align*}
\left[  \boldsymbol{A}^{\ast}(\partial,\mathfrak{f})E,g\right]   &  =\left[
E,\boldsymbol{A}(\partial,\mathfrak{f})g\right]  =%
{\textstyle\int\nolimits_{K^{n}}}
\overline{\widehat{E}}\left\vert \mathfrak{f}\right\vert _{K}\widehat
{g}\left\vert d^{n}\xi\right\vert _{K}\\
&  =%
{\textstyle\int\nolimits_{K^{n}}}
\overline{\widehat{E}\left\vert \mathfrak{f}\right\vert _{K}}\widehat
{g}\left\vert d^{n}\xi\right\vert _{K}=%
{\textstyle\int\nolimits_{K^{n}}}
\widehat{g}\left\vert d^{n}\xi\right\vert _{K}=\left[  \delta,g\right]  ,
\end{align*}
for $g\in\mathcal{H}_{\infty}$.

(ii)$\Rightarrow$(i) $\left[  \boldsymbol{A}^{\ast}(\partial,\mathfrak{f}%
)E,g\right]  =\left[  \delta,g\right]  $ implies%
\[%
{\textstyle\int\nolimits_{K^{n}}}
\overline{\widehat{E}}\left\vert \mathfrak{f}\right\vert _{K}\widehat
{g}\left\vert d^{n}\xi\right\vert _{K}=%
{\textstyle\int\nolimits_{K^{n}}}
\widehat{g}\left\vert d^{n}\xi\right\vert _{K},
\]
i.e.
\[%
{\textstyle\int\nolimits_{K^{n}}}
\left\{  \overline{\widehat{E}\left\vert \mathfrak{f}\right\vert _{K}%
-1}\right\}  \widehat{g}\left\vert d^{n}\xi\right\vert _{K}=0
\]
for any $g\in\mathcal{H}_{\infty}$. Since $\mathcal{H}_{\infty}$ is dense in
$L^{2}$, because $\mathcal{D}\hookrightarrow\mathcal{H}_{\infty}$, the
functional $g\rightarrow\int_{K^{n}}\left\{  \overline{\widehat{E}\left\vert
\mathfrak{f}\right\vert _{K}-1}\right\}  \widehat{g}\left\vert d^{n}%
\xi\right\vert _{K}$ extends to $L^{2}$ as the zero functional, i.e. the
function $\overline{\widehat{E}\left\vert \mathfrak{f}\right\vert _{K}-1}$ is
orthogonal to any $\widehat{g}\in L^{2}$, which implies that $\widehat
{E}\left\vert \mathfrak{f}\right\vert _{K}=1$ in $L^{2}$.

(iii) $\Rightarrow$(i) Take $h\in\mathcal{H}_{\infty}$, and $u=E\ast
h\in\mathcal{H}_{\infty}^{\ast}$, then%
\[
\left[  \boldsymbol{A}^{\ast}(\partial,\mathfrak{f})u,h\right]  =\left[
u,\boldsymbol{A}(\partial,\mathfrak{f})h\right]  =\left[  g,h\right]
\]
i.e.%
\begin{equation}%
{\textstyle\int\nolimits_{K^{n}}}
\overline{\widehat{E\ast g}}\left\vert \mathfrak{f}\right\vert _{K}\widehat
{g}\left\vert d^{n}\xi\right\vert _{K}=%
{\textstyle\int\nolimits_{K^{n}}}
\overline{\widehat{g}}\widehat{h}\left\vert d^{n}\xi\right\vert _{K}.
\label{Eq7}%
\end{equation}
By using that $E\ast h\in\mathcal{D}^{\prime}\left(  K^{n}\right)  $, see
(\ref{H_infinity_*}), we have $\widehat{E\ast h}=\widehat{E}\widehat{h}$ in
$\mathcal{D}^{\prime}\left(  K^{n}\right)  $, and thus (\ref{Eq7}) becomes
\[%
{\textstyle\int\nolimits_{K^{n}}}
\overline{\left\{  \left(  \widehat{E}\left\vert \mathfrak{f}\right\vert
_{K}-1\right)  \widehat{g}\right\}  }\widehat{h}\left\vert d^{n}\xi\right\vert
_{K}=0,
\]
which implies that $\left(  \widehat{E}\left\vert \mathfrak{f}\right\vert
_{K}-1\right)  \widehat{g}=0$ in $L^{2}$ for any $g\in\mathcal{H}_{\infty}$,
and hence $\widehat{E}\left\vert \mathfrak{f}\right\vert _{K}=1$ in $L^{2}$.

(ii)$\Rightarrow$(iii) First $E\ast g$ exists in $\mathcal{D}^{\prime}\left(
K^{n}\right)  $ if and only if $\widehat{E}\widehat{g}\in\mathcal{D}^{\prime
}\left(  K^{n}\right)  $, see e.g. \cite[p.115]{V-V-Z}. We check this last
condition: taking $\theta\in\mathcal{D}\left(  K^{n}\right)  $, we have for
some non-negative integer $l$ that%
\begin{align*}
\left\vert
{\textstyle\int\nolimits_{K^{n}}}
\widehat{E}\widehat{g}\theta\left\vert d^{n}\xi\right\vert _{K}\right\vert  &
=\left\vert
{\textstyle\int\nolimits_{K^{n}}}
\widehat{E}\mathcal{F}\left(  g\ast\mathcal{F}^{-1}\left(  \theta\right)
\right)  \left\vert d^{n}\xi\right\vert _{K}\right\vert \leq\left\Vert
E\right\Vert _{-l}\left\Vert g\ast\mathcal{F}^{-1}\theta\right\Vert _{l}\\
&  \leq\left\Vert \theta\right\Vert _{L^{\infty}}\left\Vert E\right\Vert
_{-l}\left\Vert g\right\Vert _{l}.
\end{align*}
This shows that $E\ast g$ $\in$ $\mathcal{D}^{\prime}\left(  K^{n}\right)  $
and that $\widehat{E\ast g}=$ $\widehat{E}\widehat{g}$ in $\mathcal{D}%
^{\prime}\left(  K^{n}\right)  $. On the other hand, $E\ast g$ $\in$
$\mathcal{H}_{\infty}^{\ast}\left(  K^{n}\right)  $ for any $g\in
\mathcal{H}_{\infty}\left(  K^{n}\right)  $, because%
\[
\left\Vert E\ast g\right\Vert _{-l}^{2}=%
{\textstyle\int\nolimits_{K^{n}}}
\left[  \xi\right]  _{K}^{-l}\left\vert \widehat{E}\right\vert ^{2}\left\vert
\widehat{g}\right\vert ^{2}\left\vert d^{n}\xi\right\vert _{K}\leq\left\Vert
\widehat{g}\right\Vert _{L^{\infty}}^{2}\left\Vert E\right\Vert _{-l}^{2}%
\]
for any positive integer $l$, since $\widehat{g}\in C_{0}\left(  K^{n}\right)
$, cf. Lemma \ref{Lemma2A}-(v). Now by using that (i)$\Leftrightarrow$(ii), we
have%
\begin{align*}
\left[  \boldsymbol{A}^{\ast}(\partial,\mathfrak{f})u,h\right]   &  =\left[
E\ast g,\boldsymbol{A}(\partial,\mathfrak{f})h\right]  =%
{\textstyle\int\nolimits_{K^{n}}}
\overline{\widehat{E}\widehat{g}}\left\vert \mathfrak{f}\right\vert
_{K}\widehat{h}\left\vert d^{n}\xi\right\vert _{K}\\
&  =%
{\textstyle\int\nolimits_{K^{n}}}
\overline{\left\{  \widehat{E}\left\vert \mathfrak{f}\right\vert _{K}\right\}
}\overline{\widehat{g}}\widehat{h}\left\vert d^{n}\xi\right\vert _{K}=%
{\textstyle\int\nolimits_{K^{n}}}
\overline{\widehat{g}}\widehat{h}\left\vert d^{n}\xi\right\vert _{K}=\left[
g,h\right]  .
\end{align*}

\end{proof}

\begin{theorem}
\label{Theorem5}Let $\mathfrak{f}$ be a non-constant polynomial with
coefficients in $R_{K}$, with $K$ a non-Archimedean local field of \ arbitrary
characteristic. Assume that $\left[  \widehat{\left\vert \widetilde
{\mathfrak{f}}\right\vert _{K}^{\overline{s}}},g\right]  $ has a meromorphic
continuation to the whole complex plane as a $\mathcal{H}_{\infty}^{\ast
}\left(  K^{n}\right)  $-valued function of $s$, with poles having negative
real parts. Then there exists a fundamental solution for operator
$\boldsymbol{A}^{\ast}(\partial,\mathfrak{f})$.
\end{theorem}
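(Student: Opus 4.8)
The plan is to use the Gel'fand--Shilov method of analytic continuation: the fundamental solution will turn out to be the zeroth coefficient of the Laurent expansion of $\widehat{\left\vert \widetilde{\mathfrak{f}}\right\vert _{K}^{\overline{s}}}$ at $s=-1$. By Theorem \ref{Theorem4A} it suffices to exhibit $E\in\mathcal{H}_{\infty}^{\ast}$ with $\boldsymbol{A}^{\ast}(\partial,\mathfrak{f})E=\delta$ in $\mathcal{H}_{\infty}^{\ast}$. Set $\Phi(s):=\widehat{\left\vert \widetilde{\mathfrak{f}}\right\vert _{K}^{\overline{s}}}$, which by hypothesis is an $\mathcal{H}_{\infty}^{\ast}$-valued meromorphic function on $\mathbb{C}$, holomorphic for $\operatorname{Re}(s)>0$, with all poles having negative real part. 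The first step is a functional equation: using the pairing (\ref{pairing}), the identity $\widehat{\boldsymbol{A}(\partial,\mathfrak{f})h}=\left\vert \mathfrak{f}\right\vert _{K}\widehat{h}$, and estimate (\ref{Estimation_h}) to justify the integral manipulations, one checks for $h\in\mathcal{H}_{\infty}$ and $\operatorname{Re}(s)>0$ that
\[
\left[ \boldsymbol{A}^{\ast}(\partial,\mathfrak{f})\Phi(s),h\right] =\left[ \Phi(s),\boldsymbol{A}(\partial,\mathfrak{f})h\right] =\int_{K^{n}}\left\vert \mathfrak{f}(\xi)\right\vert _{K}^{s+1}\widehat{h}(\xi)\left\vert d^{n}\xi\right\vert _{K}=\left[ \Phi(s+1),h\right] .
\]
Since $\boldsymbol{A}^{\ast}(\partial,\mathfrak{f}):\mathcal{H}_{\infty}^{\ast}\rightarrow\mathcal{H}_{\infty}^{\ast}$ is continuous (Section \ref{Sec_dual_operator}), both sides are $\mathcal{H}_{\infty}^{\ast}$-valued meromorphic functions of $s$, so the identity $\boldsymbol{A}^{\ast}(\partial,\mathfrak{f})\Phi(s)=\Phi(s+1)$ propagates to all of $\mathbb{C}$ by analytic continuation.

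The second step is to identify $\Phi(0)$. Because the poles of $\Phi$ have negative real part, $\Phi$ is holomorphic at $s=0$; and for $g\in\mathcal{H}_{\infty}$ and $0<\operatorname{Re}(s)<1$ the integrand $\left\vert \mathfrak{f}(\xi)\right\vert _{K}^{s}\widehat{g}(\xi)$ is dominated by $\left[ \xi\right] _{K}^{d}\left\vert \widehat{g}(\xi)\right\vert \in L^{1}$ (by (\ref{Estimation_h}) and the proof of Lemma \ref{Lemma2A}-(i)), so dominated convergence gives $\left[ \Phi(0),g\right] =\int_{K^{n}}\widehat{g}(\xi)\left\vert d^{n}\xi\right\vert _{K}=g(0)=\left[ \delta,g\right] $, cf. Remark \ref{Note_Dirac_dist}. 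Hence $\Phi(0)=\delta$.

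The third step extracts the fundamental solution. Write the Laurent expansion of $\Phi$ about $s=-1$ as $\Phi(s)=\sum_{k\geq -m}T_{k}(s+1)^{k}$ with $T_{k}\in\mathcal{H}_{\infty}^{\ast}$; the $T_{k}$ are contour integrals of $\Phi$ over a small circle about $-1$, on which $\Phi$ is bounded, hence contained in a single Hilbert space $\mathcal{H}_{-l}$, so these integrals converge in $\mathcal{H}_{\infty}^{\ast}$. As $\boldsymbol{A}^{\ast}(\partial,\mathfrak{f})$ is continuous and linear it commutes with the extraction of Laurent coefficients, so $\boldsymbol{A}^{\ast}(\partial,\mathfrak{f})\Phi(s)=\sum_{k\geq -m}\bigl(\boldsymbol{A}^{\ast}(\partial,\mathfrak{f})T_{k}\bigr)(s+1)^{k}$; comparing this with $\Phi(s+1)$, which by the second step is holomorphic at $s=-1$ with value $\delta$, forces $\boldsymbol{A}^{\ast}(\partial,\mathfrak{f})T_{k}=0$ for $-m\leq k\leq -1$ and $\boldsymbol{A}^{\ast}(\partial,\mathfrak{f})T_{0}=\delta$. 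Therefore $E:=T_{0}\in\mathcal{H}_{\infty}^{\ast}$ satisfies $\boldsymbol{A}^{\ast}(\partial,\mathfrak{f})E=\delta$, and by Theorem \ref{Theorem4A} it is a fundamental solution of $\boldsymbol{A}^{\ast}(\partial,\mathfrak{f})$.

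The heart of the argument is the classical Bernstein/Gel'fand--Shilov mechanism: multiplication of $\left\vert \mathfrak{f}\right\vert _{K}^{s}$ by $\left\vert \mathfrak{f}\right\vert _{K}$ shifts $s$ by $1$, and a pole at $s=-1$ is killed by this multiplication, leaving the constant Laurent coefficient as a solution of the division equation. The points needing care are the bookkeeping ones --- that the functional equation and the Laurent manipulations are valid at the level of $\mathcal{H}_{\infty}^{\ast}$-valued meromorphic functions --- and these are handled by the continuity of $\boldsymbol{A}^{\ast}(\partial,\mathfrak{f})$ on $\mathcal{H}_{\infty}^{\ast}$ together with the density of $\mathcal{D}$ in $\mathcal{H}_{\infty}$; no genuine analytic obstacle arises.
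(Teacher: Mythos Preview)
Your proof is correct and follows essentially the same Gel'fand--Shilov approach as the paper: both extract the zeroth Laurent coefficient $T_0$ of $\widehat{\left\vert \widetilde{\mathfrak{f}}\right\vert _{K}^{\overline{s}}}$ at $s=-1$ and use the shift $\left\vert \mathfrak{f}\right\vert _{K}^{s}\cdot\left\vert \mathfrak{f}\right\vert _{K}=\left\vert \mathfrak{f}\right\vert _{K}^{s+1}$ to conclude $\boldsymbol{A}^{\ast}(\partial,\mathfrak{f})T_{0}=\delta$. Your presentation is slightly more structured---you isolate the functional equation $\boldsymbol{A}^{\ast}(\partial,\mathfrak{f})\Phi(s)=\Phi(s+1)$ and the identification $\Phi(0)=\delta$ as separate steps, and you invoke continuity of $\boldsymbol{A}^{\ast}$ to commute with Laurent extraction---whereas the paper works directly at the level of pairings $[T_k,\boldsymbol{A}(\partial,\mathfrak{f})g]$; but the substance is the same.
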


\begin{proof}
The proof is based in the Gel'fand-Shilov method of analytic continuation, see
\cite[p. \ 65-67]{Igusa}. By the hypothesis that $\left[  \widehat{\left\vert
\widetilde{\mathfrak{f}}\right\vert _{K}^{\overline{s}}},g\right]  $ has an
analytic continuation to the whole complex plane, there exists a Laurent
expansion \ around $s=-1$ of the form
\[
\left[  \widehat{\left\vert \widetilde{\mathfrak{f}}\right\vert _{K}%
^{\overline{s}}},g\right]  =\int\limits_{K^{n}\smallsetminus\mathfrak{f}%
^{-1}\left(  0\right)  }\left\vert \mathfrak{f}\right\vert _{K}^{s}\widehat
{g}\text{ }\left\vert d^{n}\xi\right\vert _{K}=\sum\limits_{k\in\mathbb{Z}%
}\left[  T_{k},g\right]  \left(  s+1\right)  ^{k}%
\]
where $T_{k}\in$ $\mathcal{H}_{\infty}^{\ast}$ for $k\in\mathbb{Z}$. This fact
is established by using the ideas presented in \cite[p. \ 65-67]{Igusa}. Now,
\begin{align*}
\left[  \widehat{\left\vert \widetilde{\mathfrak{f}}\right\vert _{K}%
^{\overline{s}}},\boldsymbol{A}(\partial,\mathfrak{f})g\right]   &
=\int\limits_{K^{n}\smallsetminus\mathfrak{f}^{-1}\left(  0\right)
}\left\vert \mathfrak{f}\right\vert _{K}^{s+1}\widehat{g}\text{ }\left\vert
d^{n}\xi\right\vert _{K}=\sum\limits_{k\in\mathbb{Z}}\left[  T_{k}%
,\boldsymbol{A}(\partial,\mathfrak{f})g\right]  \left(  s+1\right)  ^{k}\\
&  =\left[  T_{0},\boldsymbol{A}(\partial,\mathfrak{f})g\right]
+\sum\limits_{k=1}^{\infty}\left[  T_{k},\boldsymbol{A}(\partial
,\mathfrak{f})g\right]  \left(  s+1\right)  ^{k},
\end{align*}
since $\int_{K^{n}\smallsetminus\mathfrak{f}^{-1}\left(  0\right)  }\left\vert
\mathfrak{f}\right\vert _{K}^{s+1}\widehat{g}$ $\left\vert d^{n}\xi\right\vert
_{K}$ does not have poles with real part $-1$. Therefore%
\begin{align*}
\lim_{s\rightarrow-1}\left[  \widehat{\left\vert \widetilde{\mathfrak{f}%
}\right\vert _{K}^{\overline{s}}},\boldsymbol{A}(\partial,\mathfrak{f}%
)g\right]   &  =\int\limits_{K^{n}\smallsetminus\mathfrak{f}^{-1}\left(
0\right)  }\left\vert \mathfrak{f}\right\vert _{K}^{s+1}\widehat{g}\text{
}\left\vert d^{n}\xi\right\vert _{K}=\left[  T_{0},\boldsymbol{A}%
(\partial,\mathfrak{f})g\right] \\
&  =\int\limits_{K^{n}\smallsetminus\mathfrak{f}^{-1}\left(  0\right)
}\widehat{g}\text{ }\left\vert d^{n}\xi\right\vert _{K},
\end{align*}
since $\widehat{g}\in L^{1}$, i.e. $\left[  T_{0},\boldsymbol{A}%
(\partial,\mathfrak{f})g\right]  =\left[  \delta,g\right]  $, for any
$g\in\mathcal{H}_{\infty}$, which implies that $\boldsymbol{A}^{\ast}%
(\partial,\mathfrak{f})T_{0}=\delta$ with $T_{0}\in\mathcal{H}_{\infty}^{\ast
}$.
\end{proof}


\begin{thebibliography}{99}                                                                                               %


\bibitem {A-K-S}Albeverio S., Khrennikov A. Yu., Shelkovich V. M., Theory of
$p$-adic distributions: linear and nonlinear models. Cambridge University
Press, 2010.

\bibitem {AVG}Arnold V. I., Gussein-Zade S. M. and Varchenko A. N.,
Singularit\'{e}s des applications diff\'{e}rentiables, Vol II. \'{E}ditions
Mir, Moscou, 1986.

\bibitem {Atiyah}Atiyah M. F., Resolution of singularities and division of
distributions, Comm. Pure Appl. Math. 23 (1970), 145--150.

\bibitem {Be-Bro}Belkale P. and Brosnan P., Periods and Igusa local zeta
functions, Int. Math. Res. Not. 49 (2003), 2655--2670.

\bibitem {Berthelot}Berthelot Pierre, Introduction \`{a} la th\'{e}orie
arithm\'{e}tique des $\mathcal{D}$-modules. Cohomologies $p$-adiques et
applications arithm\'{e}tiques, II, Ast\'{e}risque No. 279 (2002), 1--80.

\bibitem {Ber}Bernstein I. N., Modules over the ring of differential
operators; the study of fundamental solutions of equations with constant
coefficients, Functional Analysis and its Applications 5, No.2, 1-16 (1972).

\bibitem {B-G-Gonzalez-Dom}Bollini C.G., Giambiagi J. J., Gonz\'{a}lez
Dom\'{\i}nguez A., Analytic regularization and the divergencies of quantum
field theories, Il Nuovo Cimiento XXXI, no. 3 (1964) 550-561.

\bibitem {BG-GC-Zuniga}Bocardo-Gaspar Miriam, Garc\'{\i}a-Compe\'{a}n H.,
Z\'{u}\~{n}iga-Galindo W. A., Regularization of $p$-adic String Amplitudes,
and Multivariate Local Zeta Functions. arXiv:1611.03807.

\bibitem {Cassaigneetal}Cassaigne Julien, Maillot Vincent, Hauteur des
hypersurfaces et fonctions z\^{e}ta d'Igusa, J. Number Theory 83 (2000), no.
2, 226--255.

\bibitem {D0}Denef J., Report on Igusa's Local Zeta Function, S\'{e}minaire
Bourbaki 43 (1990-1991), exp. 741; Ast\'{e}risque 201-202-203 (1991), 359--386.

\bibitem {D-L}Denef J. and Loeser F., Motivic Igusa zeta functions, J. Alg.
Geom. 7 (1998), 505--537.

\bibitem {Gelfand-Shilov-1}Gel'fand \ I. M., Shilov G. E., Generalized
Functions. Vol. 1. Properties and operations. AMS Chelsea publishing, 2010.

\bibitem {Gel-Shilov}Gel'fand I. M., Shilov G. E., Generalized functions. Vol.
2. Spaces of fundamental and generalized functions. AMS Chelsea publishing, 2010.

\bibitem {Gel-Vil}Gel'fand I. M., Vilenkin N. Ya, Generalized functions. Vol.
4. Applications of harmonic analysis. AMS Chelsea publishing, 2010.

\bibitem {Hida et al}Hida Takeyuki, Kuo Hui-Hsiung, Potthoff J\"{u}rgen,
Streit, Ludwig, White noise. An infinite-dimensional calculus. Kluwer Academic
Publishers Group, 1993.

\bibitem {Igusa}Igusa J.-I., An introduction \ to the theory of local zeta
functions. AMS/IP Studies in Advanced Mathematics, 2000.

\bibitem {Igusa-SPF}Igusa J.-I., A stationary phase formula for $p$-adic
integrals and its applications, Algebraic geometry and its applications,
Springer-Verlag (1994), pp. 175--194.

\bibitem {Igusa1}Igusa J.-I., Forms of higher degree. The Narosa Publishing
House, 1978.

\bibitem {H}Hironaka H., Resolution of singularities of an algebraic variety
over a field of characteristic zero, Ann. Math. 79 (1964), 109--326.

\bibitem {Koch}Kochubei Anatoly N., Pseudo-differential equations and
stochastics over non-Archimedean fields. Marcel Dekker, 2001.

\bibitem {Lo2}Loeser F., Fonctions z\^{e}ta locales d'Igusa \`{a} plusiers
variables, int\'{e}gration dans les fibres, et discriminants, Ann. Sc. Ec.
Norm. Sup. 22 (1989), no. 3, 435--471.

\bibitem {Mustata}Musta\c{t}\u{a} Mircea, Bernstein-Sato polynomials in
positive characteristic, J. Algebra 321 (2009), no. 1, 128--151.

\bibitem {Obata}Obata Nobuaki, White noise calculus and Fock space. Lecture
Notes in Mathematics, 1577. Springer-Verlag, 1994.

\bibitem {Rubin}Rubin Boris, Riesz potentials and integral geometry in the
space of rectangular matrices, Adv. Math. 205 (2006), no. 2, 549--598.

\bibitem {Speer}Speer Eugene R., Generalized Feynman amplitudes, Annals of
Mathematics Studies, No. 62. Princeton University Press, 1969.

\bibitem {Taibleson}Taibleson M. H., Fourier analysis on local fields.
Princeton University Press, 1975.

\bibitem {VZ}Veys W. and Z\'{u}niga-Galindo W.A., Zeta functions for analytic
mappings, log-principalization of ideals and Newton polyhedra, Trans. Amer.
Math. Soc. 360 (2008), 2205--2227.

\bibitem {VZ-1}Veys W. and Z\'{u}niga-Galindo W.A., Zeta functions and
oscillatory integrals for meromorphic functions, Adv. Math. http://dx.doi.org/10.1016/j.aim.2017.02.022.

\bibitem {V-V-Z}Vladimirov V. S., Volovich I. V., Zelenov E. I., P-adic
analysis and mathematical physics. World Scientific, 1994.

\bibitem {Weil}Weil Andr\'{e}, Basic number theory. Springer-Verlag, 1967.

\bibitem {Yosida}Yosida K\^{o}saku, Functional analysis. Springer-Verlag, 1965.

\bibitem {Zuniga2016}Z\'{u}\~{n}iga-Galindo W. A., Non-Archimedean white
noise, pseudodifferential stochastic equations, and massive Euclidean fields,
J. Fourier Anal. Appl. 23, no 2 (2017) 288--323.

\bibitem {Zuniga-LNM-2016}Z\'{u}\~{n}iga-Galindo W. A., Pseudodifferential
equations over non-Archimedean spaces. Lectures Notes in Mathematics 2174,
Springer, 2016.

\bibitem {Zuniga-Nagoya-2003}Zuniga-Galindo W. A., Local zeta functions and
Newton polyhedra, Nagoya Math. J. 172 (2003), 31--58.

\bibitem {Zuniga-Padova-2003}Zuniga-Galindo W. A., Fundamental solutions of
pseudo-differential operators over $p$-adic fields, Rend. Sem. Mat. Univ.
Padova 109 (2003), 241--245.

\bibitem {Zuniga-TAMS-2001}Z\'{u}\~{n}iga-Galindo W. A., Igusa's local zeta
functions of semiquasihomogeneous polynomials, Trans. Amer. Math. Soc. 353
(2001), no. 8, 3193--3207.
\end{thebibliography}
\end{document}